\documentclass[11pt,reqno,twoside]{amsart}

\usepackage[
  left=1.5in,
  right=1.5in,
  top=1.5in,
  bottom=1.6in,
  footskip=0.45in,
  headheight=14pt,
  headsep=27pt
]{geometry}

\usepackage[T1]{fontenc}
\usepackage{newtxtext}
\usepackage{mathtools}
\usepackage[smallerops]{newtxmath}
\usepackage{mathrsfs}
\usepackage{bm}
\usepackage{microtype}

\usepackage{graphicx}
\usepackage{xcolor}
\usepackage{tikz}
\usepackage{booktabs}
\usepackage{enumitem}

\makeatletter
\renewcommand{\section}{%
  \@startsection{section}{1}{\z@}%
  {24pt plus 4pt minus 2pt}%
  {12pt plus 2pt minus 1pt}%
  {\normalfont\bfseries\scshape\centering}%
}

\renewcommand{\subsection}{%
  \@startsection{subsection}{2}{\z@}%
  {16pt plus 3pt minus 2pt}%
  {10pt plus 2pt minus 1pt}%
  {\normalfont\bfseries\scshape}%
}

\renewcommand{\subsubsection}{%
  \@startsection{subsubsection}{3}{\z@}%
  {14pt plus 3pt minus 2pt}%
  {8pt plus 2pt minus 1pt}%
  {\normalfont\bfseries}%
}

\renewcommand{\paragraph}{%
  \@startsection{paragraph}{4}{\z@}%
  {12pt plus 2pt minus 1pt}%
  {-1em}%
  {\normalfont\bfseries}%
}

\def\@secnumfont{\bfseries}
\makeatother

\usepackage{fancyhdr}

\newcommand{\runningtitle}{%
  \small Determinantal dynamics from
  $\mathrm{GL}_N(\mathbb{C})$ Brownian motion%
}
\newcommand{\runningauthors}{\small Z. S. Mirsajjadi}

\fancypagestyle{plain}{%
  \fancyhf{}%
  \fancyfoot[C]{\thepage}%
}

\allowdisplaybreaks
\AtBeginDocument{%
  \setlength{\abovedisplayskip}{11pt plus 3pt minus 2pt}%
  \setlength{\belowdisplayskip}{11pt plus 3pt minus 2pt}%
  \setlength{\abovedisplayshortskip}{9pt plus 3pt minus 2pt}%
  \setlength{\belowdisplayshortskip}{9pt plus 3pt minus 2pt}%
  \setlength{\jot}{4pt}%
}

\usepackage{needspace}

\usepackage[hidelinks]{hyperref}
\usepackage[
  nameinlink,
  capitalise,
  noabbrev
]{cleveref}

\hypersetup{%
 pdftitle={Infinite-Particle Determinantal Dynamics from Brownian Motion on
\texorpdfstring{$\mathrm{GL}_N(\mathbb{C})$}{GL\_N(C)}},
  pdfauthor={Zahra Sadat Mirsajjadi},
  pdfsubject={Mathematics},
  pdfkeywords={Random matrices, correlation kernels, stochastic processes}
}

\usepackage{etoolbox}
\makeatletter
\apptocmd{\thebibliography}{%
  \fontsize{10.4pt}{13.5pt}\selectfont
  \setlength{\itemsep}{5pt}%
  \setlength{\parsep}{0pt}%
}{}{}
\makeatother

\newtheorem{thm}{Theorem}[section]
\newtheorem{lem}[thm]{Lemma}
\newtheorem{prop}[thm]{Proposition}
\newtheorem{cor}[thm]{Corollary}

\theoremstyle{definition}
\newtheorem{defn}[thm]{Definition}

\theoremstyle{remark}
\newtheorem{rmk}[thm]{Remark}

\numberwithin{equation}{section}

\newcommand{\defeq}{\mathrel{\overset{\textnormal{def}}{=}}}
\newcommand{\distreq}{\mathrel{\overset{\textnormal{d}}{=}}}
\newcommand{\Ibr}[1]{\llbracket\hspace{-.2mm}#1\hspace{-.2mm}\rrbracket}
\newcommand{\LP}{\mathscr{L}\!\mathscr{P}}

\newcommand{\sbullet}{%
  \mathbin{\raisebox{0.3ex}{\scalebox{0.5}{$\bullet$}}}%
}

\title[Determinantal Dynamics from $\mathrm{GL}_N(\mathbb{C})$ Brownian motion]
{Infinite-Particle Determinantal Dynamics from Brownian Motion on
$\mathrm{GL}_N(\mathbb{C})$}

\author[Z.S.Mirsajjadi]{Zahra Sadat Mirsajjadi}

\renewenvironment{abstract}
{%
  \begin{center}
    \normalsize\bfseries Abstract
  \end{center}
  \vspace{1pt}%
  \begin{quotation}
    \small
}
{%
  \end{quotation}
}

\begin{document}

\maketitle

\begin{abstract}
We study the edge scaling limit of the logarithms of the squared
singular values of multiplicative Brownian motion on the general linear group 
$\mathrm{GL}_N(\mathbb{C})$ with general initial conditions. 
Building on the results of 
\cite{AssiotisMirsajjadi2026}, we prove that, for every initial condition in the enhanced state space, the
limiting dynamics are determinantal and derive an explicit double-contour formula for their extended multitime correlation kernel. The dependence on the initial state, including information not determined by the particle coordinates,
is encoded by
the associated Laguerre--P\'olya entire function. 
We further extend the corresponding infinite-dimensional stochastic
differential equation description to arbitrary initial configurations,
including those with coinciding coordinates.
\end{abstract}

\section{Introduction}
\noindent
Multiplicative Brownian motion on the general linear group
$\mathrm{GL}_N(\mathbb{C})$
\cite{NRW,Biane-freeBm,Jones-OConnell,IsotropicBm} may be viewed as a
multiplicative analogue of Hermitian matrix Brownian motion. 
Its squared
singular values form an interacting particle system with logarithmic
interaction potential \cite{AssiotisMirsajjadi2026}, whose logarithms
admit a representation in terms of non-colliding Brownian motions with
equally spaced drifts
\cite{Jones-OConnell,Katori-kernel,Takahashi-Katori}; see also
\cite{AssiotisMirsajjadi2026}.
Under appropriate edge scaling, these dynamics give rise in the
large-$N$ limit to an infinite-dimensional Feller process
\cite{AssiotisMirsajjadi2026}. 
When the matrix process is started from the identity, the limiting
dynamics also arises in the asymptotic study of products of random
matrices under a balanced scaling of the number of factors and the
matrix dimensions
\cite{Ahn,Akemann2019integrable,AkemannUniversality,LWW}.

For the specific identity initial condition, Ahn \cite{Ahn}
established convergence of the joint Laplace transforms and correlation
functions of the edge-scaled logarithms of the squared singular values
and, in particular, obtained an explicit formula for the extended
correlation kernel of the limiting process; see also \cite{Rahman} for a related fixed-time kernel representation.  
Ahn further proved
convergence of the extremal paths on compact time intervals using the
machinery developed in \cite{CorwinHammond}.

A substantially more general path-space convergence result was recently
established in \cite{AssiotisMirsajjadi2026} using the framework
developed in \cite{AssiotisMirsajjadi2024}, which builds on the method of intertwiners originally introduced  by Borodin and
Olshanski \cite{MarkovProc-pathSpace-GT,MarkovDynam-ThomaCone,OlshanskiLectureNotes}. 
For arbitrary drift and general initial conditions, Assiotis and
Mirsajjadi construct the path-space edge scaling limit of the squared
singular values and establish the Feller property of the limiting
dynamics. They also obtain a stochastic partial differential equation
governing the evolution of the limiting reverse characteristic
polynomial. For strictly positive and strictly ordered initial
coordinates, they further establish a Gibbs resampling property for the
limiting line ensemble and derive the associated infinite-dimensional
system of stochastic differential equations (ISDE) with logarithmic
interactions.

These results, in particular, extend both the path-space convergence and the
probabilistic description of the limiting dynamics beyond the identity
initial condition. The corresponding correlation kernels are not, however, derived in
\cite{AssiotisMirsajjadi2026}.  The purpose of this paper is to establish the determinantal structure of the limiting process for general initial conditions and to derive explicit formulas for its fixed-time and extended multitime correlation kernels. 
The limiting kernels encode the initial state through its associated
Laguerre--P\'olya function and yield, for the identity initial
condition, alternative contour-integral formulas for the correlation
functions of the limiting dynamics constructed in \cite{Ahn}.

Determinantal correlation kernels have long played a central role in
the study of non-colliding particle systems and dynamical limits in
random matrix theory; see, among others,
\cite{KarlinMcGregor,EynardMehta1998,Soshnikov2000,JohanssonDet,BorodinDet,
ForresterBook,KatoriTanemuraNoncollidingBM,KatoriTanemuraMarkov} and the references therein. 
Classical examples include the extended correlation kernels associated with Dyson Brownian motion and its bulk and edge scaling limits 
\cite{NagaoForrester1998,PrahoferSpohn,TracyWidomDyson}.

In
finite dimensions, Dyson Brownian motion at $\beta=2$ and related
non-colliding diffusions admit explicit fixed-time and multitime
determinantal descriptions for general deterministic initial
configurations
\cite{KatoriTanemuraMarkov,KatoriTanemuraComplexBM}. More generally,
Assiotis \cite{ExactSolution} obtained explicit space-time correlation
kernels from arbitrary deterministic initial conditions for a broad
class of finite-dimensional non-colliding diffusions associated with
classical random matrix ensembles.

Infinite-particle limits of the
classical Dyson and squared-Bessel dynamics  give rise to several 
fundamental dynamical point processes of random matrix theory,
including the extended $\mathrm{sine}$, $\mathrm{Airy}$ and $\mathrm{Bessel}$ processes; see
\cite{KatoriTanemuraMarkov,OsadaTanemuraStrongMarkov}
and the references therein. 
These classical processes arise primarily as equilibrium dynamics or from distinguished initial conditions.

Particularly relevant to the present work are the non-equilibrium
infinite-particle results of Katori and Tanemura
\cite{KatoriTanemura1,KatoriTanemura2} and the very recent work of Assiotis and Li 
\cite{AssiotisLi2026}. For suitable classes of infinite initial point
configurations, Katori and Tanemura
\cite{KatoriTanemura1,KatoriTanemura2} constructed determinantal
Dyson and non-colliding squared Bessel dynamics whose multitime
correlation kernels retain the dependence on the initial configuration
through associated entire functions; see also
\cite{KatoriTanemuraMarkov,KatoriTanemuraComplexBM}. In these
constructions, the corresponding entire functions are determined by the
initial point configuration via Weierstrass canonical products.

Assiotis and Li \cite{AssiotisLi2026} extended the non-equilibrium
determinantal results of Katori and Tanemura for infinite-dimensional
Dyson Brownian motion.
 At $\beta=2$, they constructed determinantal
processes on an extended space of initial data and obtained explicit
extended correlation kernels. Additional parameters recording
information not determined by the limiting particle configuration enter
these kernels through an associated Laguerre--P\'olya entire function. 
They also used these kernels to prove convergence in finite-dimensional distributions for a broad class of initial data and, for certain symmetric initial configurations, a rescaled long-time limit to the stationary extended $\mathrm{sine}$ process.

The present work establishes a corresponding determinantal structure
for the multiplicative dynamics considered here.
Our results provide
an explicit extended multitime correlation kernel for every initial
state in the full enhanced state space of the limiting Feller process, allowing in particular for coincident and degenerate configurations. 
 The associated Laguerre--P\'olya function encodes the initial state in its entirety. 
Consequently, identical particle configurations may give rise to
different correlation kernels through the additional parameter in the
corresponding enhanced states.

The results of the present work further complete the description of
the limiting dynamics by extending the Gibbs resampling property and
the associated ISDE formulation of
\cite{AssiotisMirsajjadi2026} to arbitrary initial states, thereby covering degenerate configurations
beyond the scope of the earlier results. 
This yields a construction of solutions to an infinite-dimensional
singular log-interacting SDE from all admissible deterministic initial
data, including configurations with coinciding particle coordinates, where 
the singular interaction at time zero is interpreted as an improper
time integral. To the best of our knowledge, such a construction is
novel. 
See, among others, 
\cite{Tsai,OsadaPTRF,OsadaAOP,OsadaTanemura,AssiotisMirsajjadi2024,
AssiotisMirsajjadi2026,AssiotisLi2026} for results on ISDEs arising
from random matrix dynamics.

\subsection{Main results}

\noindent
For any $N\in\mathbb{N}$, we denote by
$\mathrm{M}_N(\mathbb{C})$ the
space of all $N\times N$ complex matrices and by
$\mathrm{GL}_N(\mathbb{C})$ its group of invertible elements. The
drifted multiplicative Brownian motion on
$\mathrm{GL}_N(\mathbb{C})$, with drift
$\theta\in\mathbb{R}$, is defined as the unique strong solution of the
following matrix-valued SDE:
\begin{align}\label{GL_NBmSDE}
\mathrm{d}\mathsf{Y}_N^\theta(t)
=
\mathsf{Y}_N^\theta(t)\,\mathrm{d}\mathbf{W}_N(t)
+
\theta\mathsf{Y}_N^\theta(t)\,\mathrm{d}t,
\quad \mathsf{Y}_N^\theta(0)\in \mathrm{GL}_N(\mathbb{C}),
\end{align}
where 
$(\mathbf{W}_N(t))_{t\ge 0}$ is an $\mathrm{M}_N(\mathbb{C})$-valued additive
complex Brownian motion with independent standard real and imaginary
parts.

Note that, 
when the initial matrix is invertible, the solution of
\eqref{GL_NBmSDE} is $\mathrm{GL}_N(\mathbb{C})$-valued. Nevertheless, the  matrix SDE  admits a
unique strong solution from every initial condition in
$\mathrm{M}_N(\mathbb{C})$ \cite{IkedaWatanabe,RevuzYor}; we shall use this natural extension when
considering singular initial matrices.

We write 
$\mathbb{R}_+\defeq[0,\infty)$ and use the notation
$\llbracket K\rrbracket\defeq\{1,\ldots,K\}$ for $K\in\mathbb{N}$.

For each $N\in\mathbb{N}$, we define the Weyl chambers
\begin{align*}
\mathbb{W}_{N,+}
&\defeq
\left\{
\bm{x}=(x_i)_{i=1}^N\in\mathbb{R}_+^N:
x_1\ge x_2\ge \cdots\ge x_N
\right\},\\
\mathbb{W}_N
&\defeq
\left\{
\bm{x}=(x_i)_{i=1}^N\in\mathbb{R}^N:
x_1\ge x_2\ge \cdots\ge x_N
\right\},
\end{align*}
with their Euclidean interiors denoted by $\mathbb{W}_{N,+}^\circ$ and $\mathbb{W}_N^\circ$ respectively. We also define the following  infinite chamber, endowed with the topology of coordinate-wise convergence,
\begin{align*}
\mathbb{W}_{\infty,+}
&\defeq
\left\{
\bm{x}=(x_i)_{i\in\mathbb{N}}\in\mathbb{R}_+^{\mathbb{N}}:
x_1\ge x_2\ge\cdots
\right\}.
\end{align*}

For $N\in\mathbb{N}$ and $\theta\in\mathbb{R}$, we denote by 
$\left(\mathsf{y}^{(N)}(t)\right)_{t\ge0}=\big(\mathsf{y}_i^{(N)}(t)\big)_{i\in\Ibr{N},\,t\ge0}$
the squared 
singular values of $(\mathsf{Y}_N(t/4))_{t\ge0}$ arranged in decreasing order.  
Here and throughout the paper, we suppress the drift parameter $\theta$ from the notation for simplicity. 

It is straightforward to verify that $(\mathsf{y}^{(N)}(t))_{t\ge0}$ satisfies the following log-interacting SDE 
\begin{align}\label{singularValues-SDE}
\mathrm{d}\mathsf{y}_i^{(N)}(t)
={}&
\mathsf{y}_i^{(N)}(t)\,\mathrm{d}\mathsf{w}_i(t)
+
\frac12\mathsf{y}_i^{(N)}(t)
\left(
1+\theta
+
\sum_{j\in\Ibr{N}\setminus\{i\}}
\frac{
\mathsf{y}_i^{(N)}(t)+\mathsf{y}_j^{(N)}(t)
}{
\mathsf{y}_i^{(N)}(t)-\mathsf{y}_j^{(N)}(t)
}
\right)\mathrm{d}t,
\quad i\in\Ibr{N},
\end{align}
with the $(\mathsf{w}_i)_{i=1}^N$ being independent standard Brownian motions. 
Moreover, the SDE~\eqref{singularValues-SDE} 
has a unique strong solution from every
initial condition in $\mathbb{W}_{N,+}$ \cite{GraczykMalecki2013,GraczykMalecki2014}, and its
transition semigroup is Feller; see
\cite{AssiotisMirsajjadi2026}.

We define the logarithms of the squared-singular-values 
$\left(\mathfrak{y}^{(N)}(t)\right)_{t\ge0}=\big(\mathfrak{y}_i^{(N)}(t)\big)_{i\in\Ibr{N},\,t\ge0}$ by 
\begin{align*}
    \mathfrak{y}_i^{(N)}(t)\defeq\log \mathsf{y}_i^{(N)}(t), \quad i\in\Ibr{N}, \quad t\ge 0,
\end{align*}
where, here and throughout, we adopt the convention $\log 0=-\infty$. If
$\mathsf{y}^{(N)}(0)\in\mathbb{W}_{N,+}^\circ$, or equivalently
$\mathfrak{y}^{(N)}(0)\in\mathbb{W}_N^\circ$, It\^o's formula gives  
\begin{align}\label{logSingularValuesSDE}
\mathrm{d} \mathfrak{y}_i^{(N)}(t) 
= \mathrm{d} \mathsf{w}_i(t)
+ 
\frac{\theta}{2}\mathrm{d}t
+
 \frac{1}{2}\sum_{j\in\Ibr{N}\setminus\{i\}} \coth\left(\frac{1}{2}\left(\mathfrak{y}_i^{(N)}(t) - \mathfrak{y}_j^{(N)}(t)\right)\right) \mathrm{d}t,
\quad i\in\Ibr{N}.
\end{align}
For $\theta=0$, \eqref{logSingularValuesSDE} is a particular radial
Heckman--Opdam process; see \cite{HeckmanOpdam}.

Moreover, when
$\mathfrak{y}^{(N)}(0)\in\mathbb{W}_N^\circ$, the process
$\mathfrak{y}^{(N)}$ has the following probabilistic interpretation
(see \cite{Jones-OConnell,BBO}):
    it
    is equal in distribution  to a collection of
$N$ independent drifted Brownian motions with equally spaced drifts:
    \begin{align*}
     \frac{\theta+N-1}{2}, 
     \frac{\theta+N-3}{2},\ldots, \frac{\theta+3-N}{2}, \frac{\theta+1-N}{2},
    \end{align*}
      conditioned never to intersect; see also \cite{Assiotis-complexMatrixBm}.

 Figure~\ref{fig-svGL_NBm-nrsc} illustrates a  simulation of the process $\mathsf{y}^{(N)}$  when $\theta=0$ with  $N=30$ particles starting from $(1,\dots,1)$, as well as the corresponding log-transformed dynamics $\mathfrak{y}^{(N)}$.

\begin{figure}[htbp]
    \centering
    \footnotesize

    \begin{minipage}[c]{0.48\textwidth}
        \centering
        \includegraphics[width=0.95\linewidth]{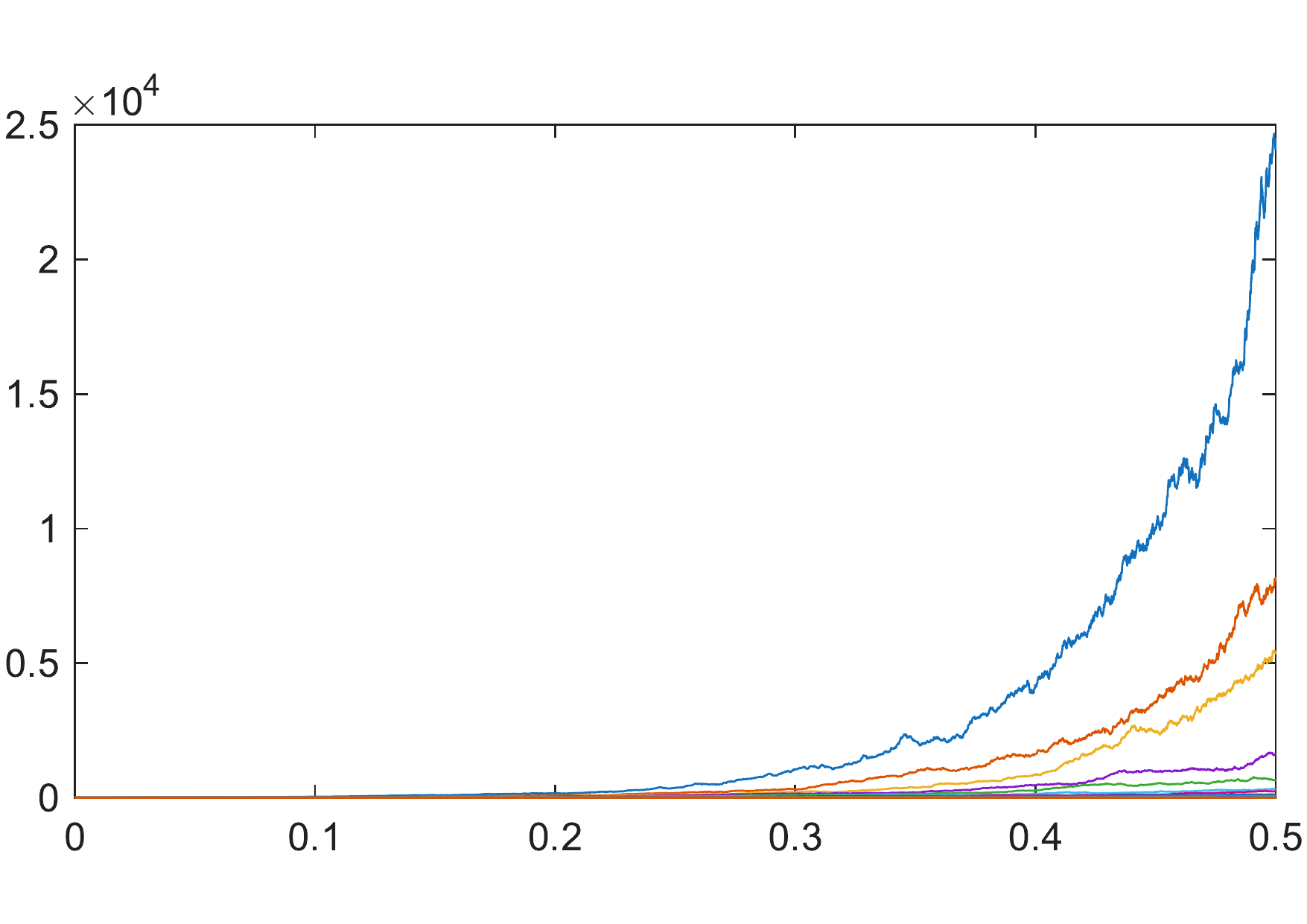}
    \end{minipage}
    \hspace{0.02\textwidth}
    \begin{minipage}[c]{0.48\textwidth}
        \centering
        \vspace{2mm}\includegraphics[width=0.96\linewidth]{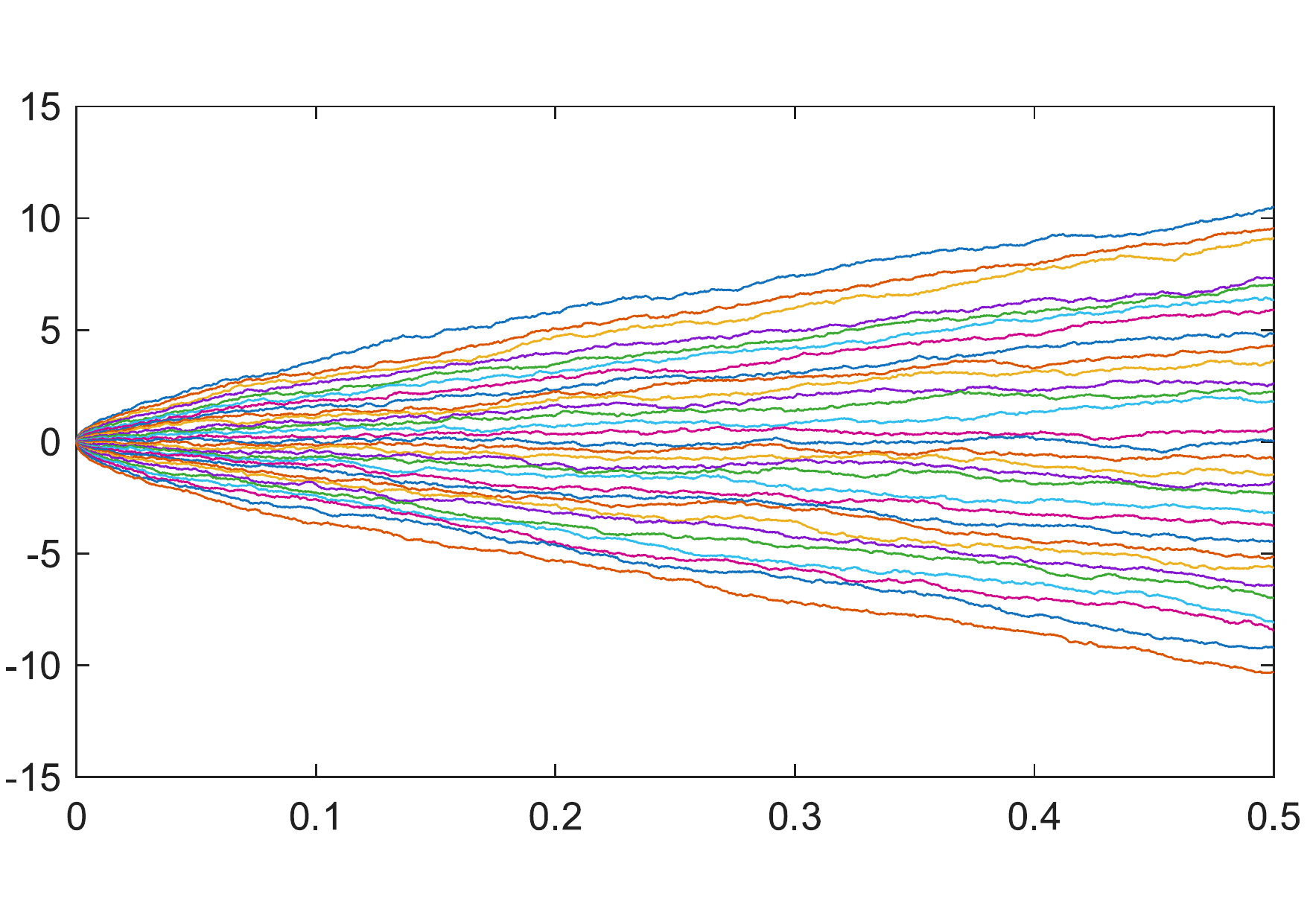}
    \end{minipage}
\caption[]{
Simulation of the process $\mathsf{y}^{(N)}$  when $\theta=0$ with   $N=30$ particles starting from $(1,\dots,1)$ (left), along with the log-transformed process $\mathfrak{y}^{(N)}$ (right).
}
\label{fig-svGL_NBm-nrsc}
\end{figure}

Let us now define the rescaled logarithmic dynamics
$\left(\mathfrak{x}^{(N)}(t)\right)_{t\ge0}
=
\bigl(\mathfrak{x}_i^{(N)}(t)\bigr)_
{i\in\Ibr{N},\,t\ge0}$ by
\begin{align}\label{rscSingularValues}
\mathfrak{x}_i^{(N)}(t)
\defeq
\mathfrak{y}_i^{(N)}(t)-\frac{N}{2}t,
\qquad
i\in\Ibr{N},\quad t\ge0.
\end{align}
We also define the corresponding exponential coordinates by
\begin{align}\label{rscExponentialCoordinates}
\mathsf{x}_i^{(N)}(t)
\defeq
\exp\bigl(\mathfrak{x}_i^{(N)}(t)\bigr),
\qquad
i\in\Ibr{N},\quad t\ge0,
\end{align}
equivalently, 
\begin{align*}
\mathsf{x}_i^{(N)}(t)
=
\mathrm{e}^{-Nt/2}\mathsf{y}_i^{(N)}(t),
\qquad
i\in\Ibr{N},\quad t\ge0.
\end{align*}
An application of It\^o's formula gives the SDE satisfied by $\left(\mathsf{x}^{(N)}(t)\right)_{t\ge0}$: 
\begin{align*}
\mathrm{d}\mathsf{x}_i^{(N)}(t)
={}&
\mathsf{x}_i^{(N)}(t)\,
\mathrm{d}\mathsf{w}_i(t)
+
\frac{\theta}{2}
\mathsf{x}_i^{(N)}(t)\,\mathrm{d}t
+
\sum_{j\in\Ibr{N}\setminus\{i\}}
\frac{
\mathsf{x}_i^{(N)}(t)\mathsf{x}_j^{(N)}(t)
}{
\mathsf{x}_i^{(N)}(t)-\mathsf{x}_j^{(N)}(t)
}\,\mathrm{d}t,
\qquad i\in\Ibr{N}.
\end{align*}
Figure~\ref{fig-rscSVGL_NBmN50} shows a  simulation of the process $\mathsf{x}^{(N)}$  when $\theta=0$ with $N=50$ particles starting from $(1,\dots,1)$, as well as the corresponding logarithmic dynamics $\mathfrak{x}^{(N)}$.

\begin{figure}[htbp] \centering
    \begin{minipage}[c]{0.48\textwidth}
        \centering
\includegraphics[width=0.95\linewidth]{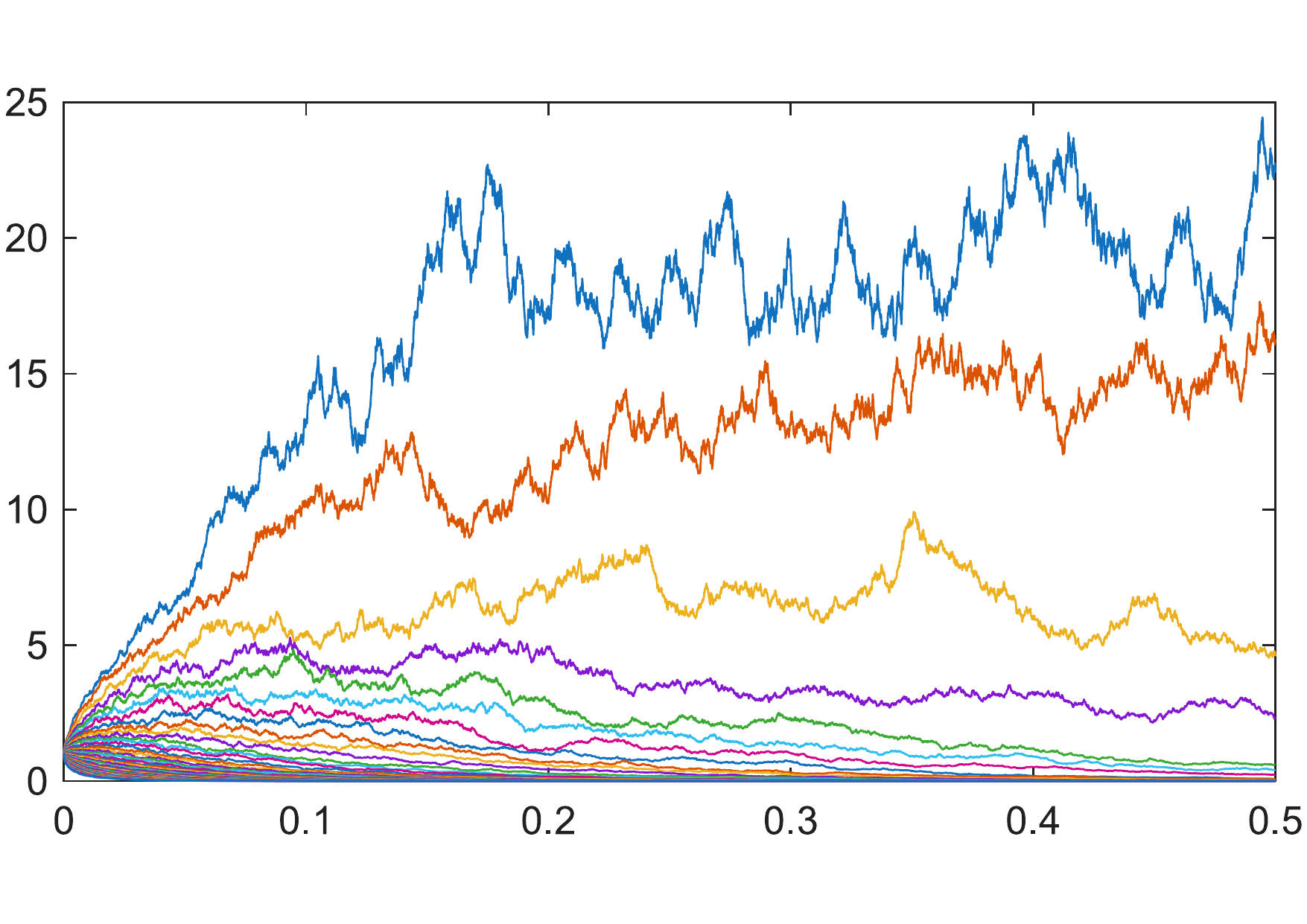}
    \end{minipage}
\hspace{0.02\textwidth}
    \begin{minipage}[c]{0.48\textwidth}
        \centering
        \includegraphics[width=0.95\linewidth]{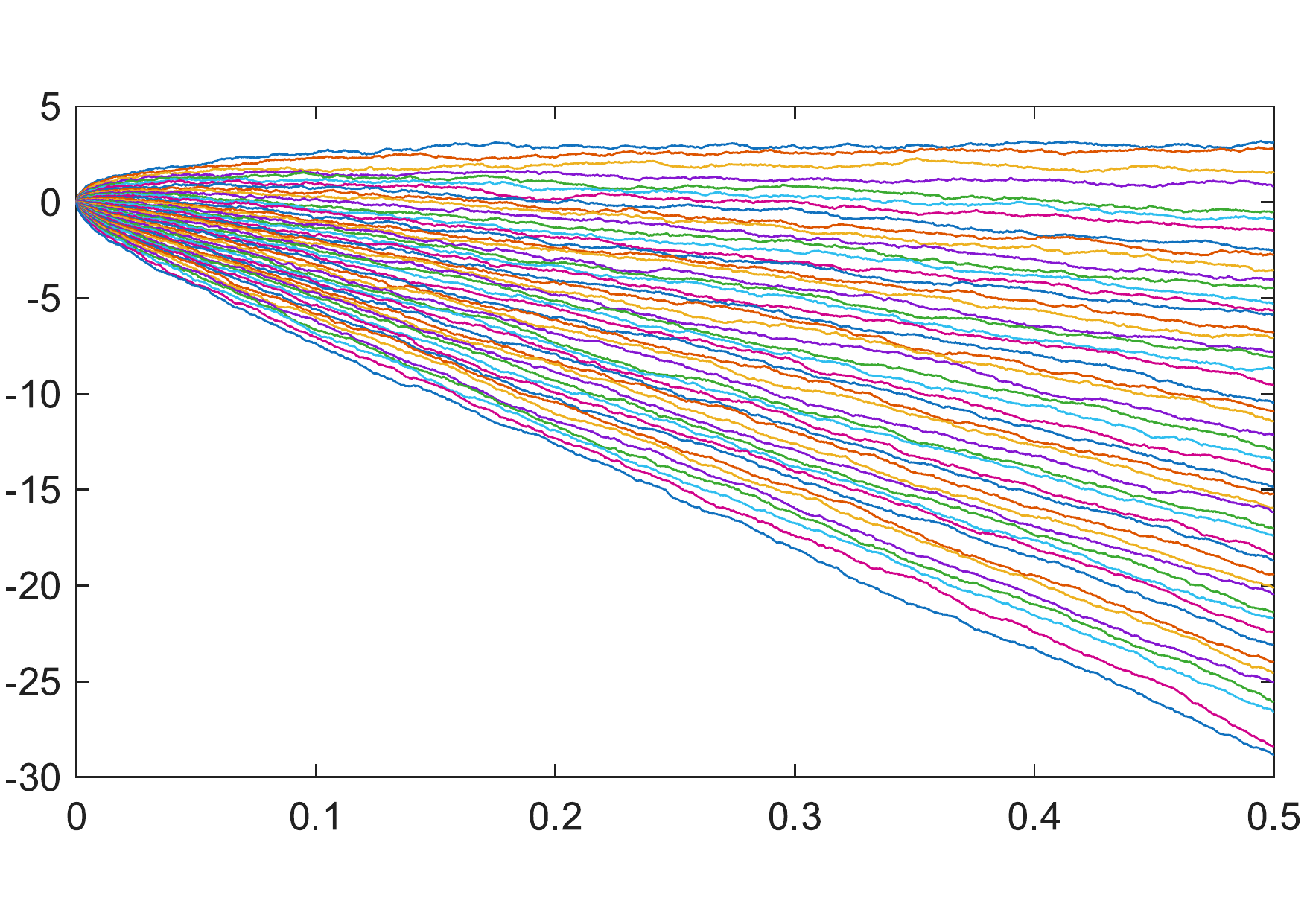}
    \end{minipage}
\caption[]{Simulation of  the process  $\mathsf{x}^{(N)}$ defined in \eqref{rscExponentialCoordinates}, when $\theta=0$  with $N=50$ particles started at $(1,\dots,1)$  (left) and its log-transformed dynamics $\mathfrak{x}^{(N)}$ (right).}
    \label{fig-rscSVGL_NBmN50}
\end{figure}

We proceed to introduce some further notation needed to state our main results.

\begin{defn}
We define
\begin{align*}
\Upsilon
\defeq
\bigg\{
\upsilon=(\bm{x},\gamma)
\in
\mathbb{W}_{\infty,+}\times\mathbb{R}_+:
\sum_{i\in\mathbb{N}}x_i\le\gamma
\bigg\},
\end{align*}
equipped with the subspace topology inherited from
$\mathbb{W}_{\infty,+}\times\mathbb{R}_+$.

For any $\upsilon=(\bm{x},\gamma)\in\Upsilon$, define the associated
entire function by the Hadamard product
\begin{align}\label{varphiUpsilon}
\varphi_{\upsilon}(z)
\defeq
\exp\bigg(
-\Big(
\gamma-\sum_{i\in\mathbb{N}}x_i
\Big)z
\bigg)
\prod_{i\in\mathbb{N}}
\left(
1-x_i z
\right),
\qquad
z\in\mathbb{C}.
\end{align}
Let $\LP_+$ denote the (non-negative) Laguerre--P\'olya class defined as
\begin{align*}
\LP_+
\defeq
\left\{
\varphi_{\upsilon}:\upsilon\in\Upsilon
\right\},
\end{align*}
equipped with the topology of uniform convergence on compact subsets
of $\mathbb{C}$.
\end{defn}

Note that $\Upsilon$ is a locally compact Polish space.
Note also that the infinite product in
\eqref{varphiUpsilon} converges locally uniformly on $\mathbb{C}$,  
and therefore defines an
entire function. Moreover, $\LP_+$ is the 
Laguerre--P\'olya class of entire functions normalized to equal $1$ at
the origin; its elements arise as locally uniform limits of normalized
polynomials with non-negative zeros. In addition, the map
$\upsilon\longmapsto\varphi_\upsilon$ is a homeomorphism from
$\Upsilon$ onto $\LP_+$; see \cite{Assiotis-RandomEntireFunctions}.

As we shall see shortly, the space $\Upsilon$ arises naturally as the
extended state space for the scaling limit of the rescaled
squared-singular-value processes constructed in
\cite{AssiotisMirsajjadi2026}.

For a Polish space $\mathcal{X}$, we denote by 
$\mathcal{C}(\mathbb{R}_+;\mathcal{X})$  the space of continuous
functions from $\mathbb{R}_+$ to $\mathcal{X}$, endowed with the
topology of uniform convergence on compact time intervals. This is
again a Polish space; see \cite{Kallenberg}. 
Recall that a Markov semigroup on a locally compact Polish space
$\mathcal{X}$ is Feller if it preserves
$\mathcal{C}_0(\mathcal{X})$, the space of continuous functions on
$\mathcal{X}$ vanishing at infinity, and is strongly continuous on
this space. A Feller diffusion is a Feller process with continuous
sample paths; see \cite{EthierKurtz}.

The space $\Upsilon$ is identified with the Feller
boundary of the projective system underlying the finite-dimensional
dynamics $\mathsf{x}^{(N)}$ and serves as the state space of the limiting Feller
diffusion; see \cite{AssiotisMirsajjadi2024,AssiotisMirsajjadi2026}
and the references therein.

Throughout the paper, finite-dimensional vectors are canonically embedded
into the corresponding infinite-dimensional state spaces by setting the
additional coordinates equal to $0$.  
The intended convention will always
be clear from the context.

 We write $\overset{\mathrm{d}}{\longrightarrow}$ for
convergence in distribution. The following path-space convergence
result is established in \cite{AssiotisMirsajjadi2026}.

\begin{thm}[\cite{AssiotisMirsajjadi2026}]
\label{thm-pathSpaceConv}
Let $\theta\in\mathbb{R}$ be arbitrary. There exists a unique Feller diffusion
$\mathsf{X}(\sbullet)
=
\left(
\left(
\mathsf{x}_i(\sbullet)
\right)_{i\in\mathbb{N}},
\bm{\gamma}(\sbullet)
\right)$
on $\Upsilon$ 
such that if $\mathsf{X}(0)=\upsilon\in\Upsilon$ and 
\begin{align}\label{IC-conv-exp}
\left(
\left(
N^{-1}\mathsf{x}_i^{(N)}(0)
\right)_{i\in\mathbb{N}},
N^{-1}\sum_{i=1}^N \mathsf{x}_i^{(N)}(0)
\right)
\xrightarrow{N\to\infty}
\upsilon,
\qquad\textnormal{ in }\,\Upsilon,
\end{align}
then, 
\begin{align}\label{pathSpaceConv-exp}
\left(
\left(
N^{-1}\mathsf{x}_i^{(N)}(\sbullet)
\right)_{i\in\mathbb{N}},
N^{-1}\sum_{i=1}^N
\mathsf{x}_i^{(N)}(\sbullet)
\right)
\xrightarrow[N\to\infty]{\,\mathrm{d}\,}
\left(
\left(
\mathsf{x}_i(\sbullet)
\right)_{i\in\mathbb{N}},
\bm{\gamma}(\sbullet)
\right), \qquad \textnormal{ in }\, \mathcal{C}(\mathbb{R}_+;\Upsilon).
\end{align}

\end{thm}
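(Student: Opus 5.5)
The plan is to follow the intertwiner (Borodin--Olshanski) method in the form developed in \cite{AssiotisMirsajjadi2024}. The first step is to show that the finite-dimensional semigroups $P_N(t)$ of $N^{-1}\mathsf{x}^{(N)}$, viewed on $\mathbb{W}_{N,+}$, form a consistent projective system. This requires Markov kernels $\Lambda_N^{N+1}:\mathbb{W}_{N+1,+}\to\mathbb{W}_{N,+}$ such that $P_{N+1}(t)\Lambda_N^{N+1}=\Lambda_N^{N+1}P_N(t)$.

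The natural candidate for $\Lambda_N^{N+1}$ is the law of the squared singular values of a corank-one truncation of a unitarily invariant matrix with prescribed singular values. This is a Dixon--Anderson type interlacing kernel, suitably rescaled by $N/(N+1)$. The intertwining then follows from the bi-unitary invariance of the drifted multiplicative Brownian motion \eqref{GL_NBmSDE}: truncating $\mathsf{Y}_{N+1}\mathrm{U}$ produces an $N\times N$ process whose singular values again satisfy \eqref{singularValues-SDE} after the time change and the $\mathrm{e}^{-Nt/2}$ rescaling. I would verify this on the level of generators, using the fact that the spherical functions (Harish-Chandra/HCIZ-type, Schur-like in the exponential coordinates) are common eigenfunctions.

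Next I would identify the boundary of the system $(\mathbb{W}_{N,+},\Lambda_N^{N+1})$ with $\Upsilon$. The characterization is through the Laplace/characteristic-polynomial functionals
\begin{align*}
\Lambda_N^\infty\big[\det(1-z\,\cdot)\big]\approx\varphi_\upsilon(z),
\end{align*}
together with the fact, from \cite{Assiotis-RandomEntireFunctions}, that $\upsilon\mapsto\varphi_\upsilon$ is a homeomorphism. The link $\Lambda^\infty_N(\upsilon,\cdot)$ is obtained as the limit of $\Lambda^{M}_N$ along sequences satisfying \eqref{IC-conv-exp}. With this in hand, the general theorem of \cite{AssiotisMirsajjadi2024} produces the limiting semigroup $P_\infty(t)$ on $\Upsilon$. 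It is characterized by $P_\infty(t)\Lambda_N^\infty=\Lambda_N^\infty P_N(t)$, and it is Feller provided two conditions hold: the $P_N$ are Feller, which follows from strong well-posedness of \eqref{singularValues-SDE} \cite{GraczykMalecki2013}, and the links satisfy the Feller-boundary approximation property. Uniqueness follows because the functions $\Lambda_N^\infty f$ separate points and the intertwining determines $P_\infty(t)$ on a dense set. Convergence of finite-dimensional distributions under \eqref{IC-conv-exp} then comes from the same approximation.

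Path-space convergence in $\mathcal{C}(\mathbb{R}_+;\Upsilon)$ requires tightness. I would prove it coordinatewise using the SDE for $\mathsf{x}^{(N)}$. The linear statistic $N^{-1}\sum_i\mathsf{x}_i^{(N)}$ satisfies
\begin{align*}
\mathrm{d}\sum_i\mathsf{x}_i=\sum_i\mathsf{x}_i\,\mathrm{d}\mathsf{w}_i+\tfrac{\theta}{2}\sum_i\mathsf{x}_i\,\mathrm{d}t,
\end{align*}
since the interaction terms cancel pairwise by antisymmetry. This gives Kolmogorov-type moment bounds for the $\gamma$-component, and hence control of every coordinate through $\mathsf{x}_i\le\gamma$. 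For the individual ordered coordinates I would use the symmetric functions $N^{-k}\sum_i\mathsf{x}_i^k$, equivalently the coefficients of $\det(1-z\mathsf{x}^{(N)}/N)$, which satisfy closed-form SDEs and from which the ordered coordinates are recovered continuously. Continuity of the limit paths follows from the uniform modulus estimates.

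The main obstacle is the Feller-boundary step: proving that $\Lambda_N^{M}$ converges, uniformly on compacts of the suitably compactified space, to $\Lambda_N^\infty$ with boundary exactly $\Upsilon$. In particular, the extra parameter $\gamma-\sum x_i$ has to appear as the limit of mass carried by vanishing particles. I would attack this first through asymptotics of $\mathbb{E}\det(1-z\,\cdot)$ under $\Lambda_N^M$, via an explicit contour-integral/Schur-type formula.
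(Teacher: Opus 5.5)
Your architecture (intertwining kernels from bi-unitary invariance, $\Upsilon$ as a Feller boundary, Borodin--Olshanski construction of the limiting semigroup) is the one behind the cited result. The paper itself gives no proof of this theorem and defers to \cite{AssiotisMirsajjadi2026,AssiotisMirsajjadi2024}.

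\textbf{Tightness: a valid but different route.} Your hand-made tightness step is sound. The normalized power sums $P_k\defeq N^{-k}\sum_i(\mathsf{x}^{(N)}_i)^k$ do satisfy the closed system
\begin{align*}
\mathrm{d}P_k=k\sum_{i}\bigl(N^{-1}\mathsf{x}^{(N)}_i\bigr)^k\,\mathrm{d}\mathsf{w}_i+\Bigl(\frac{k\theta}{2}P_k+\frac{k}{2}\sum_{a=1}^{k-1}P_aP_{k-a}\Bigr)\mathrm{d}t ,
\end{align*}
with bracket $k^2P_{2k}\le k^2P_1^{2k}$. Moreover, $\upsilon\mapsto(\gamma,\sum_ix_i^2,\sum_ix_i^3,\ldots)$ is a homeomorphism on each compact set $\{\gamma\le C\}\cap\Upsilon$. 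Kolmogorov bounds therefore give tightness in $\mathcal{C}(\mathbb{R}_+;\Upsilon)$.

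This step is, however, not needed in the intertwiner framework. Once the links have the Feller-boundary approximation property, intertwining gives convergence of the Feller semigroups on a dense subspace of $\mathcal{C}_0(\Upsilon)$. The Ethier--Kurtz theorem then yields convergence in Skorokhod space, hence in $\mathcal{C}$, since all paths are continuous.

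\textbf{Two smaller corrections.}
\begin{enumerate}
\item The intertwining comes from deleting one \emph{row} only, i.e.\ from the $N\times(N+1)$ matrix formed by the first $N$ rows of $\mathsf{Y}_{N+1}$. Its squared singular values are the eigenvalues of the $N\times N$ corner of $\mathsf{Y}_{N+1}\mathsf{Y}_{N+1}^{\dagger}$, which gives the interlacing kernel. The extra column raises the It\^o drift by exactly the amount absorbed by $\mathrm{e}^{-(N+1)t/2}$ versus $\mathrm{e}^{-Nt/2}$. An $N\times N$ corner of $\mathsf{Y}_{N+1}$ itself is not a Markov function.
\item The Feller property of $P_N$ comes from $\mathsf{Y}_N(t)=\mathrm{e}^{\theta t}\mathbf{Y}_0\mathsf{G}_N(t)$, i.e.\ continuity in the initial matrix, not from strong uniqueness alone.
\end{enumerate}

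\textbf{The genuine gap: the Feller-boundary step.} This is the step you flag as the main obstacle, and the tool you propose cannot settle it. The mean of $\det(1-z\,\cdot)$ under $\Lambda_N^M(\bm{x}^{(M)},\cdot)$ is a polynomial of degree $N$ recording only the $N$ numbers $\mathbb{E}[e_k]$. It therefore cannot give weak convergence of $\Lambda_N^M(\bm{x}^{(M)},\cdot)$ for fixed $N$, let alone the uniform approximation the Feller-boundary property requires. For the same reason, your displayed relation can only be an $N\to\infty$ statement about $\det(1-z\,\cdot/N)$.

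What is missing is an explicit limiting link with a determining transform. $\Lambda_N^\infty(\upsilon,\cdot)$ is the law of the eigenvalues of
\begin{align*}
H=\sum_{i\in\mathbb{N}}x_i\,\xi_i\xi_i^{\dagger}+\Big(\gamma-\sum_{i\in\mathbb{N}}x_i\Big)\mathbf{I}_N ,
\end{align*}
where the $\xi_i$ are i.i.d.\ standard complex Gaussian vectors in $\mathbb{C}^N$. It is characterized by $\mathbb{E}\,\mathrm{e}^{-\mathrm{tr}(AH)}=\prod_{j=1}^N\varphi_\upsilon(-s_j)^{-1}$ for $A\ge0$ with eigenvalues $s_j$. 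The approximation property then amounts to convergence of the corresponding HCIZ/Laplace transforms of the corners, uniformly in the initial point. Alternatively, one can use a direct Gaussian approximation of the first $N$ coordinates of Haar columns.

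The function $\varphi_\upsilon$ itself enters only through the concentration $\det(1-zH/N)\to\varphi_\upsilon(z)$, i.e.\ the embedded image of $\Lambda_N^\infty(\upsilon,\cdot)$ tends to $\delta_\upsilon$. This concentration, not point separation, is what makes $\bigcup_N\Lambda_N^\infty\mathcal{C}_0(\mathbb{W}_{N,+})$ dense in $\mathcal{C}_0(\Upsilon)$. Point separation is not enough because this linear span is not an algebra. Density is what lets you define $P_\infty(t)$ by extension and identify the limit.
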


When $\theta=0$ and
$\mathsf{x}^{(N)}(0)=(1,\ldots,1)$ for every $N$, the logarithmic
coordinate process 
$\bigl(\log\mathsf{x}_i(t)\bigr)_{i\in\mathbb N,\,t>0}$
of the resulting limit coincides in law with the line ensemble
constructed by Ahn \cite{Ahn}.

Figure~\ref{rscSVofGL_NBm-X_0=1} illustrates the finite-$N$ rescaled
dynamics $N^{-1}\mathsf{x}^{(N)}$ and their logarithmic coordinates for
$N=70$, $\theta=0$, and
$\mathsf{x}^{(N)}(0)=(1,\ldots,1)$. See also
Figure~\ref{rscSVofGL_NBm-X_0=rand} for the corresponding processes
with $N=70$ and $\theta=-2$, started from a random initial
configuration.

\begin{figure}[h!]
 \centering
 \begin{minipage}[c]{0.48\textwidth}
\centering
\includegraphics[width=0.95\linewidth]{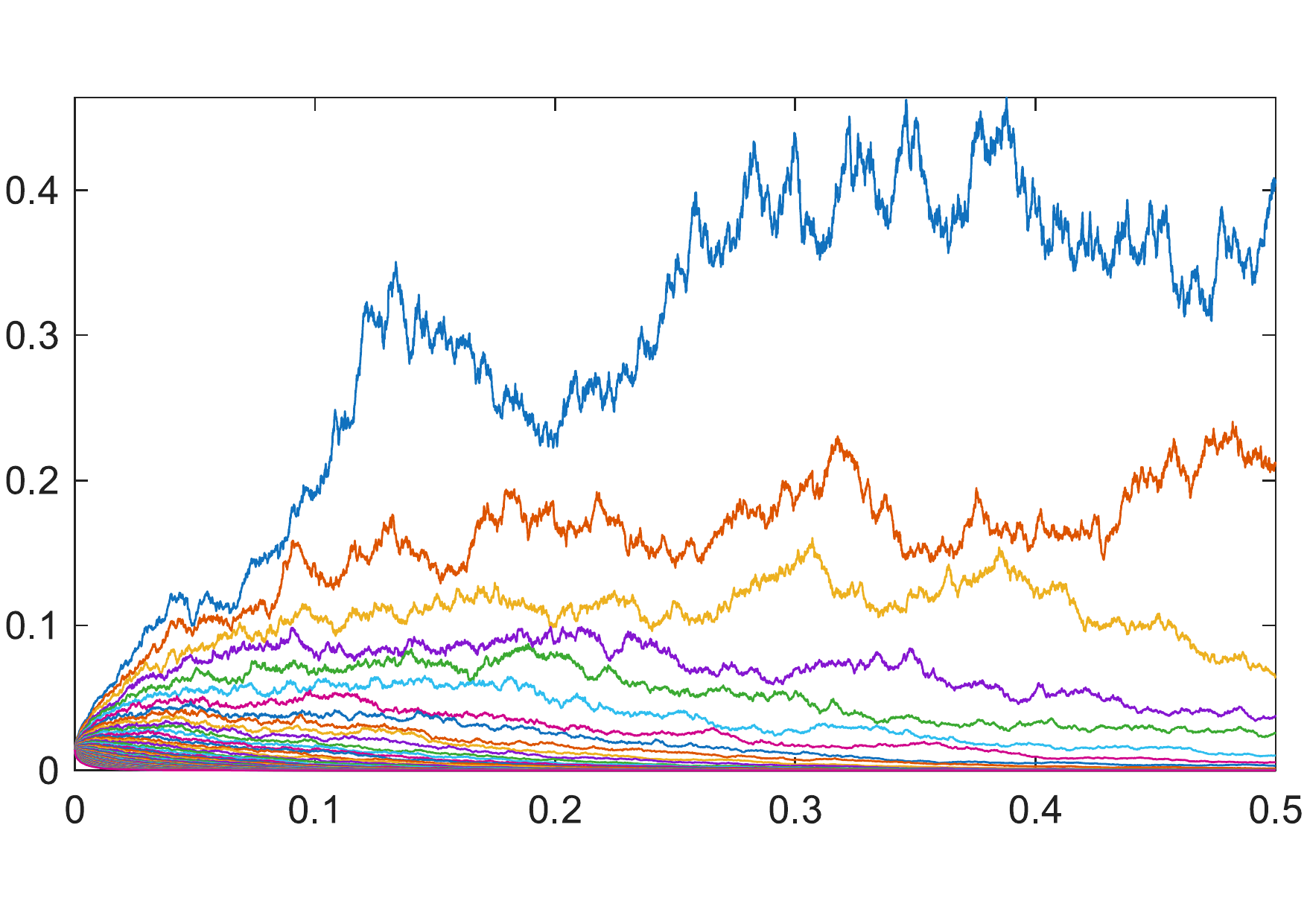}
 \end{minipage}
 \hspace{0.02\textwidth}
  \begin{minipage}[c]{0.48\textwidth}
\centering
\includegraphics[width=0.95\linewidth]{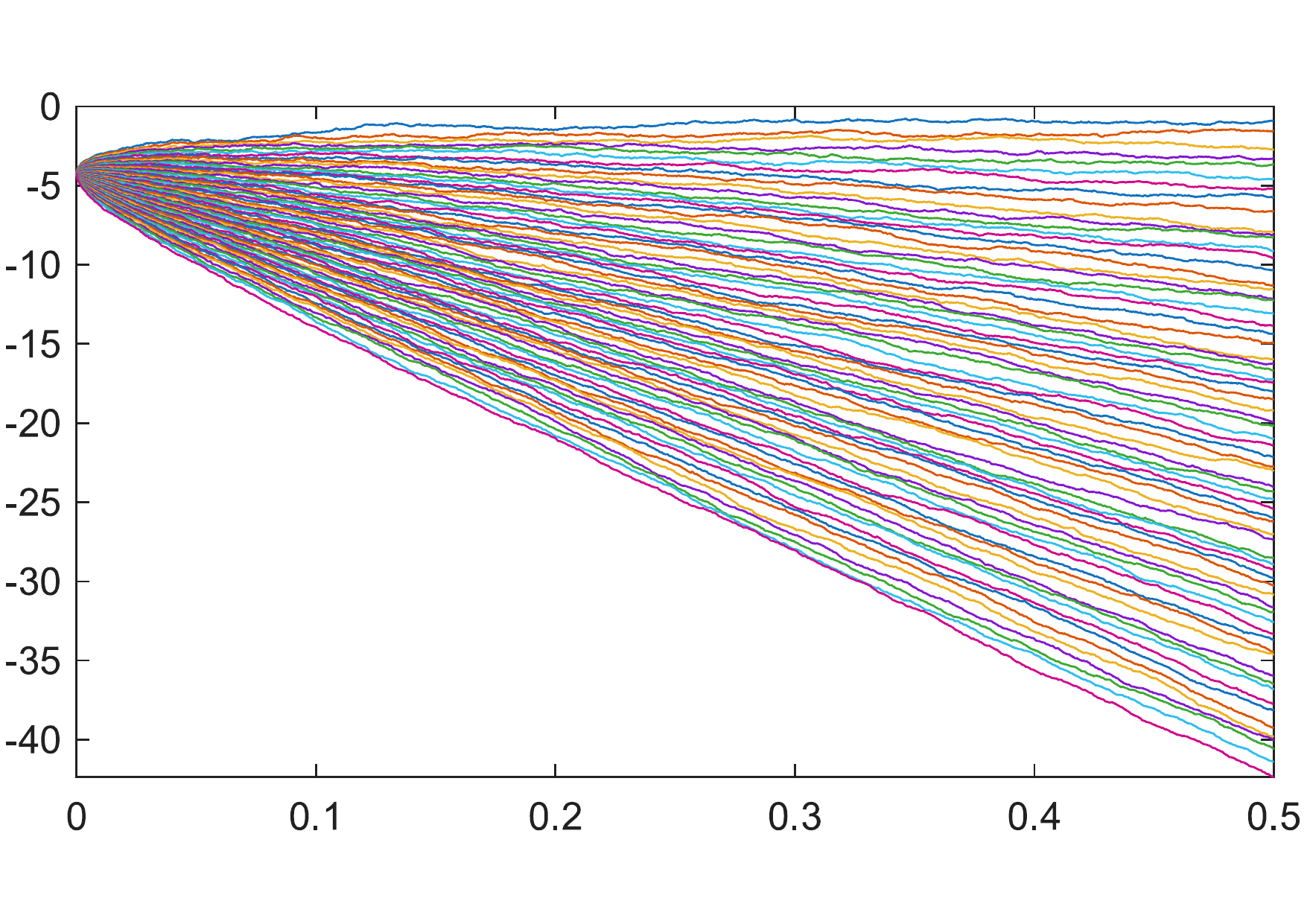}
 \end{minipage}
\caption[]{Simulation of the process $N^{-1}\mathsf{x}^{(N)}$ with $\theta = 0$   and $N = 70$ particles started at $(N^{-1},\dots,N^{-1})$ (left), along with the log-transformed dynamics (right).} 
\label{rscSVofGL_NBm-X_0=1}
\end{figure}

We now turn to the determinantal structure of the limiting logarithmic  dynamics and
first briefly recall the relevant terminology. Let
$\mathsf{Conf}(\mathbb{R})$ denote the space of locally finite point
configurations (equivalently, the space of
locally finite $\mathbb{Z}_+\cup\{\infty\}$-valued Radon measures)  on $\mathbb{R}$ 
 endowed with the vague topology. A point
process on $\mathbb{R}$, namely a probability measure on $\mathsf{Conf}(\mathbb{R})$, is said to be determinantal with correlation
kernel $\mathcal{K}$, if its
$m$-point correlation functions  with respect to Lebesgue measure satisfy
\begin{align*}
\rho_m(x_1,\ldots,x_m)
=
\det\left(
\mathcal{K}(x_i,x_j)
\right)_{i,j=1}^{m},
\quad 
m\in\mathbb{N}.
\end{align*}
For a finite collection of observation times, the analogous
space-time kernel is called an extended correlation kernel. 
Let
$\mathcal{T}=\{t_1,\ldots,t_m\}$ be a finite set of times.
A point process on $\mathcal{T}\times\mathbb{R}$ is said to be
determinantal with extended correlation kernel $\mathcal{K}$ if its
$n$-point space-time correlation functions, with respect to counting
measure on $\mathcal{T}$ and Lebesgue measure on $\mathbb{R}$, satisfy
\begin{align*}
\rho_n\bigl(
(s_1,x_1),\ldots,(s_n,x_n)
\bigr)
=
\det\left(
\mathcal{K}(s_i,x_i;s_j,x_j)
\right)_{i,j=1}^{n},
\quad 
n\in\mathbb{N},
\end{align*}
for $s_1,\ldots,s_n\in\mathcal{T}$ and
$x_1,\ldots,x_n\in\mathbb{R}$. See
\cite{Lenard,Soshnikov2000,JohanssonDet,BorodinDet} for further
details.

Throughout, following the standard convention, correlation functions
are defined with respect to Lebesgue measure in space and, in the
multitime setting, its product with counting measure on the finite set
of observation times.

\begin{figure}[h]
 \centering
 \begin{minipage}[c]{0.48\textwidth}
\centering
\includegraphics[width=0.95\linewidth]{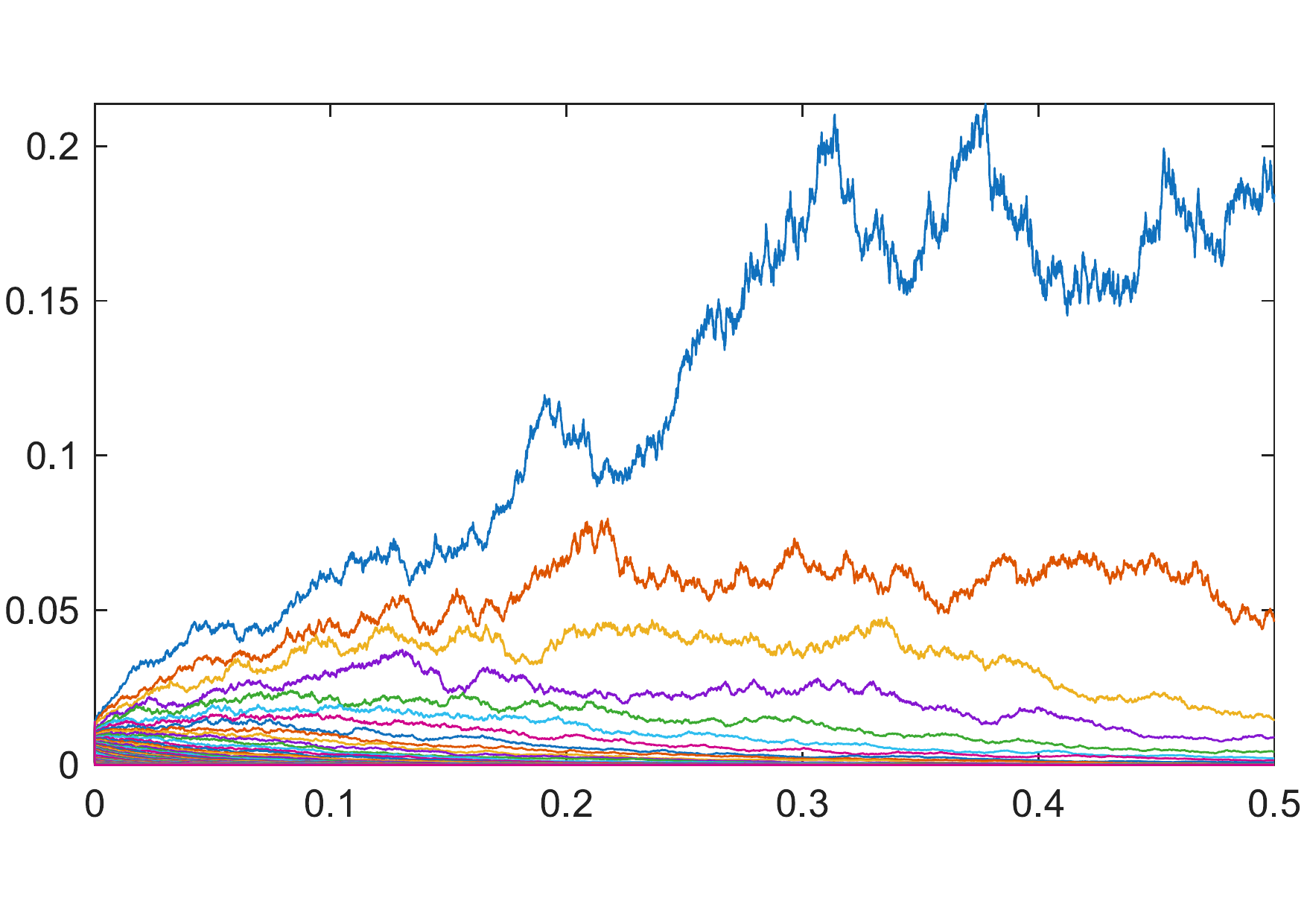}
 \end{minipage}
 \hspace{0.02\textwidth}
  \begin{minipage}[c]{0.48\textwidth}
\centering
\includegraphics[width=0.95\linewidth]{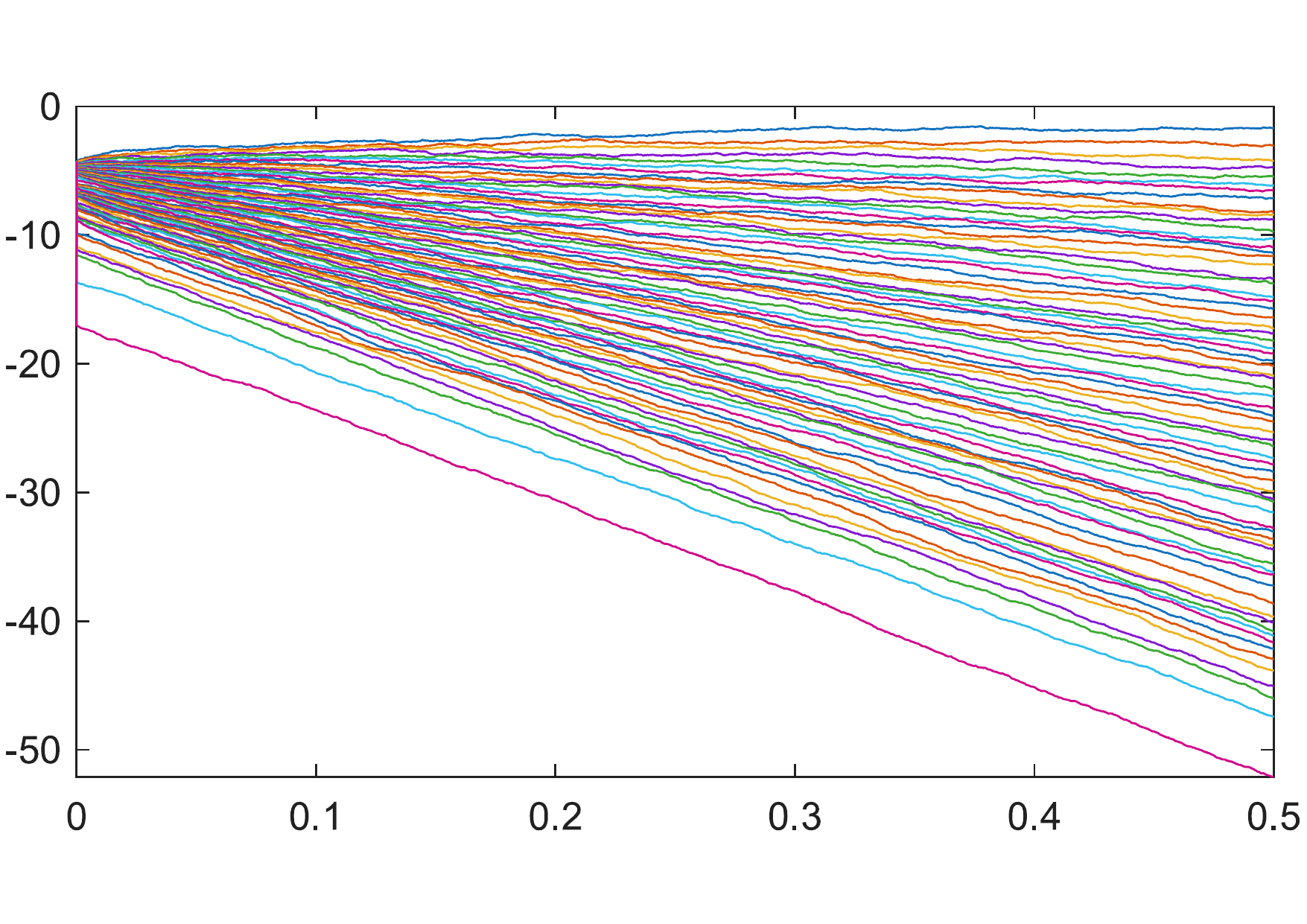}
 \end{minipage}
\caption[]{Simulation of the process  $N^{-1}\mathsf{x}^{(N)}$ with $\theta = -2$   and $N = 70$ particles started from a random initial configuration  (left), along with the log-transformed dynamics (right).} 
\label{rscSVofGL_NBm-X_0=rand}
\end{figure}

Our main results below establish, respectively, the fixed-time and multitime determinantal structure of
the limiting process from every initial condition  and provide explicit formulas for its
correlation kernels.

In what follows, we suppress the dependence of the correlation kernels on
the initial state for notational simplicity.

\begin{thm}\label{thmIntro-fixedTimeKernel}
    Let $\theta\in\mathbb{R}$ and
    $\upsilon
=
\left(
\left(
x_i
\right)_{i\in\mathbb{N}},
\gamma
\right)\in\Upsilon$.
    Consider the process $\mathsf{X}(\sbullet)
=
\left(
\left(
\mathsf{x}_i(\sbullet)
\right)_{i\in\mathbb{N}},
\bm{\gamma}(\sbullet)
\right)$ from
    Theorem~\ref{thm-pathSpaceConv}, started from $\upsilon$, and let $\mathfrak{x}_i(\sbullet)\defeq\log\mathsf{x}_i(\sbullet)$. Then,
    for every $t>0$, the point configuration
    \begin{align*}\Xi_t
    \defeq\sum_{i:\,\mathfrak{x}_i(t)>-\infty}
    \delta_{\mathfrak{x}_i(t)} 
    \end{align*}
    is a
    determinantal point process on $\mathbb{R}$ with correlation kernel
    $\mathcal{K}_t^{(\theta)}$, given, for any $u,v\in\mathbb{R}$, by
    \begin{align}\label{limitingFixedTimeKernel-Intro}
\mathcal{K}_t^{(\theta)}(u,v)
=
\frac{1}{(2\pi \mathrm{i})^2t}
\int_{\Gamma}dw
\int_{\Lambda}dz
\frac{
\exp\left(
\frac{w^2-z^2}{2t}
+\frac{z(u-\theta t/2)-w(v-\theta t/2)}{t}
-\frac{z-w}{2}
\right)
}{
\mathrm{e}^{w-z}-1
}
\frac{\varphi_{\upsilon}(\mathrm{e}^{-w})}
{\varphi_{\upsilon}(\mathrm{e}^{-z})}.
\end{align}
Here $\Gamma=L+\mathrm{i}\mathbb{R}$ is oriented upward, and
$\Lambda$ is the positively oriented half-strip contour
\begin{align*}
\Lambda
=
\{A+\mathrm{i}\vartheta:-\pi\le\vartheta\le\pi\}
\cup
\{x\pm\mathrm{i}\pi:x\le A\},
\end{align*}
where $\log x_1<A<L$, with orientation inherited from positively
oriented rectangular truncations.
\end{thm}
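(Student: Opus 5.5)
We will prove the theorem in three stages: an exact finite-$N$ determinantal formula, a scaling limit of the kernel, and a transfer of the limit to the process $\mathsf{X}$ using Theorem~\ref{thm-pathSpaceConv}.

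\emph{Finite-$N$ kernel.} We first treat initial conditions $\mathfrak{y}^{(N)}(0)=\bm{a}\in\mathbb{W}_N^\circ$. For such data, $\mathfrak{y}^{(N)}$ is $N$ independent Brownian motions with equally spaced drifts conditioned never to intersect. We use the explicit Harish-Chandra/Karlin--McGregor form of its transition density:
\begin{align*}
p_t(\bm a,\bm y)\propto \frac{\Delta(\mathrm{e}^{\bm y})}{\Delta(\mathrm{e}^{\bm a})}\det\bigl(g_t(y_j-a_k)\bigr),
\end{align*}
with $g_t$ a drifted Gaussian kernel. Its fixed-time law is then a biorthogonal ensemble in the variables $\mathrm{e}^{y}$. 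Following the approach of \cite{ExactSolution} and \cite{KatoriTanemura1}, we write the resulting correlation kernel as a double contour integral:
\begin{itemize}
\item the $w$-integral runs over a vertical line and comes from the Gaussian $g_t$;
\item the $z$-integral encircles the poles at the points $a_k$ and arises from Lagrange interpolation, i.e. residues of $\prod_k(\mathrm{e}^{z}-\mathrm{e}^{a_k})^{-1}$.
\end{itemize}
Concretely, the kernel should contain the ratio $\prod_k(1-\mathrm{e}^{a_k-w})/\prod_k(1-\mathrm{e}^{a_k-z})$ together with the factor $(\mathrm{e}^{w-z}-1)^{-1}$.

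\emph{Scaling limit.} We substitute $a_k=\log(N x_k^{(N)})+\text{const}$ and apply the recentring \eqref{rscSingularValues} together with the $N^{-1}$ scaling of Theorem~\ref{thm-pathSpaceConv}. Shifting $z\mapsto z+\log N$ and similarly for $w$, the ratio of products becomes $\varphi_{\upsilon_N}(\mathrm{e}^{-w})/\varphi_{\upsilon_N}(\mathrm{e}^{-z})$, where $\varphi_{\upsilon_N}$ is the finite polynomial $\prod_k(1-x^{(N)}_k\zeta)$. The drift terms $\pm\frac{N-1}{2}$ and the recentring $-\frac N2 t$ should cancel, leaving the Gaussian factor $\exp(\frac{w^2-z^2}{2t}+\dots)$ and the shift $-\frac{z-w}{2}$ in \eqref{limitingFixedTimeKernel-Intro}.

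By the homeomorphism $\upsilon\mapsto\varphi_\upsilon$ onto $\LP_+$, convergence \eqref{IC-conv-exp} gives locally uniform convergence $\varphi_{\upsilon_N}\to\varphi_\upsilon$, and the exponential factor $\mathrm{e}^{-(\gamma-\sum x_i)z}$ appears automatically. Since the zeros of $\varphi_\upsilon(\mathrm{e}^{-z})$ lie on the lines $\log x_i+2\pi\mathrm{i}\mathbb{Z}$, the half-strip $\Lambda$ with $A>\log x_1$ encloses all poles in one period strip, and $\Gamma$ with $L>A$ keeps $\mathrm{e}^{w-z}\neq1$.

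\emph{Main obstacle: uniform bounds.} The hardest step is controlling the integrand uniformly in $N$ so that dominated convergence applies along the unbounded contours. On $\Gamma$ the Gaussian factor $\mathrm{e}^{w^2/2t}$ decays in $\operatorname{Im}w$. Along the horizontal rays of $\Lambda$, where $\operatorname{Re}z\to-\infty$, we need:
\begin{itemize}
\item a lower bound on $|\varphi_{\upsilon_N}(\mathrm{e}^{-z})|$ at $z=x\pm\mathrm{i}\pi$, where $\mathrm{e}^{-z}$ is negative real; there $\varphi_{\upsilon_N}(\mathrm{e}^{-z})\ge1$, since all factors exceed $1$ and the exponential is at least $1$;
\item growth control of $\varphi_{\upsilon_N}(\mathrm{e}^{-w})$ by $\exp(C\gamma_N\mathrm{e}^{-L})$;
\item Gaussian decay $\mathrm{e}^{-\operatorname{Re}(z^2)/2t}$ in $z$ to dominate.
\end{itemize}
Both of the first two bounds are uniform because $\gamma_N$ is bounded.

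\emph{Transfer to $\mathsf{X}$.} With uniform integrability in hand, the finite-$N$ correlation functions converge locally uniformly to the determinants of $\mathcal{K}^{(\theta)}_t$. Theorem~\ref{thm-pathSpaceConv} yields convergence of $\Xi^{(N)}_t$ to $\Xi_t$ at fixed $t$, and points escaping to $-\infty$ are harmless for vague convergence. To identify the correlation functions of the limit, we additionally need uniform bounds on $\rho_m^{(N)}$ on compacts, e.g. via Hadamard's inequality, and the resulting locally uniform control of counts in compacts.

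Finally, to remove the restriction $\bm a\in\mathbb{W}_N^\circ$, we note that every $\upsilon\in\Upsilon$ is a limit of $\upsilon_N$ with distinct positive coordinates, so the general case follows from the same argument.
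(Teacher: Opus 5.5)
Your proposal is correct. From the rescaling step onward it follows the paper's argument:
\begin{itemize}
\item shift the contour variables by $\log N$ so that the product becomes $\varphi_{\upsilon^{(N)}}(\mathrm{e}^{-w})/\varphi_{\upsilon^{(N)}}(\mathrm{e}^{-z})$;
\item work on the fixed contours $\Gamma$ and $\Lambda$;
\item dominate the integrand using $|\varphi_{\upsilon^{(N)}}(\mathrm{e}^{-z})|\ge1$ on the horizontal rays, $|\varphi_{\upsilon^{(N)}}(\mathrm{e}^{-w})|\le\exp(\mathrm{e}^{-L}\gamma_N)$ on $\Gamma$, and Gaussian decay;
\item transfer to $\Xi_t$ through Theorem~\ref{thm-pathSpaceConv} and Hadamard-controlled expansions, using the freedom to choose approximating data in $\mathbb{W}_{N,+}^\circ$.
\end{itemize}

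The genuine difference is how you obtain the finite-$N$ kernel. The paper proves Lemma~\ref{lem-driftedBmKernel} indirectly. It uses non-colliding Brownian bridges ending at $T\bm{\mu}$, Johansson's residue computation for the bridge kernel, and a limit $T\to\infty$. That limit needs Scheff\'e's lemma for the densities and a second dominated-convergence argument for the kernels.

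You work directly with the $h$-transformed density, and this does work. With $\phi_j(y)=\mathrm{e}^{\mu_jy}$ and $\psi_i(y)=p_t(a_i,y)$, the Gram matrix is $G_{ij}=\mathrm{e}^{\mu_ja_i+\mu_j^2t/2}$, which is a Vandermonde matrix in $\mathrm{e}^{-a_i}$ up to diagonal factors. Writing $\mathrm{e}^{-\mu_j^2t/2}$ as a Gaussian integral along a vertical line gives the $w$-integral. The resulting Lagrange-interpolation sum over $i$ is a residue sum of $(\mathrm{e}^{w-z}-1)^{-1}\prod_k(\mathrm{e}^w-\mathrm{e}^{a_k})/(\mathrm{e}^z-\mathrm{e}^{a_k})$ times the Gaussian factor in $z$. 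This reproduces \eqref{driftedBmKernel} up to transposition.

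Your route is shorter and avoids the bridge limit entirely. It also extends to the multitime kernel, since the $h$-transformed multitime density is already of Eynard--Mehta form with terminal functions $\mathrm{e}^{\mu_jx}$. The paper's detour lets it cite an existing residue computation and reuse the bridge set-up of Proposition~\ref{prop-bridgeMultitimeKernel}.

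You leave this computation at the level of what the kernel ``should contain'', so it still has to be carried out. Three routine points should also be made explicit.
\begin{itemize}
\item In the form \eqref{driftedBmKernel}, the rescaled kernel carries the divergent factor $\mathrm{e}^{\frac N2(v-u)}$. It must be removed by conjugation, which does not affect correlation functions.
\item At fixed $N$, the closed $z$-contour must be deformed into the half-strip by sending its left side to $-\infty$. Gaussian decay permits this once that side lies to the left of all finite-$N$ poles.
\item Your list of bounds omits the vertical side $\Re z=A$ of $\Lambda$. There you need $|\varphi_{\upsilon^{(N)}}(\mathrm{e}^{-z})|$ bounded below uniformly in $N$. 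This holds because the largest rescaled initial coordinate tends to $x_1<\mathrm{e}^{A}$; then every factor has modulus at least $1-\delta$, and $-\log(1-r)\le Cr$ together with the bound on $\gamma_N$ controls the product. More simply, it follows from locally uniform convergence and $\varphi_\upsilon(\mathrm{e}^{-z})\neq0$ on this compact segment.
\end{itemize}
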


Observe that zero initial coordinates introduce no additional finite poles in the integrand.
In particular, if $x_1=0$, then all particle coordinates vanish and,
by the convention $\log0=-\infty$, the  condition
$\log x_1<A<L$ reduces simply to $A<L$.
Note also that the point measure $\Xi_t$ is indeed locally finite. In fact, we have
$\sum_{i\in\mathbb N}\mathsf{x}_i(t)\le\bm{\gamma}(t)<\infty$,
and hence only finitely many logarithmic particles can lie in any
compact subset of $\mathbb{R}$.

\begin{thm}\label{thmIntro-multitimeKernel}
 Let $\theta\in\mathbb{R}$ and
  $\upsilon
=
\left(
\left(
x_i
\right)_{i\in\mathbb{N}},
\gamma
\right)\in\Upsilon$.
    Consider the process $\mathsf{X}(\sbullet)
=
\left(
\left(
\mathsf{x}_i(\sbullet)
\right)_{i\in\mathbb{N}},
\bm{\gamma}(\sbullet)
\right)$ from
    Theorem~\ref{thm-pathSpaceConv}, started from $\upsilon$, and let $\mathfrak{x}_i(\sbullet)\defeq\log\mathsf{x}_i(\sbullet)$. 
    Then, for
every $m\in\mathbb{N}$ and $0<t_1<\cdots<t_m$, the space-time point
configuration
\begin{align*}
\Xi_{t_1,\ldots,t_m}
\defeq
\sum_{k=1}^{m}
\sum_{i:\,\mathfrak{x}_i(t_k)>-\infty}
\delta_{(t_k,\mathfrak{x}_i(t_k))}
\end{align*}
is a determinantal point process on
$\{t_1,\ldots,t_m\}\times\mathbb{R}$. Its extended correlation kernel is given, 
for $s,t\in\{t_1,\ldots,t_m\}$ and $u,v\in\mathbb{R}$, by
\begin{align}
\label{limitingMultitimeKernel-Intro}
\mathcal{K}^{(\theta)}(s,u;t,v)
={}&
-\mathbf{1}_{\{s<t\}}\mathcal{P}_{s,t}(u,v)
\nonumber\\
&+
\frac{1}{(2\pi\mathrm{i})^2\sqrt{st}}
\int_{\Gamma}dw
\int_{\Lambda}dz\,
\frac{1}{\mathrm{e}^{w-z}-1}
\nonumber\\
&\;\;\times
\exp\left(
\frac{w^2}{2s}
-
\frac{w(u-\theta s/2)}{s}
-
\frac{z^2}{2t}
+
\frac{z(v-\theta t/2)}{t}
-
\frac{z-w}{2}
\right)
\frac{
\varphi_{\upsilon}(\mathrm{e}^{-w})
}{
\varphi_{\upsilon}(\mathrm{e}^{-z})
},
\end{align}
where
\begin{align*}
\mathcal{P}_{s,t}(u,v)
\defeq
\frac{1}{\sqrt{2\pi(t-s)}}
\exp\left(
-\frac{(tu-sv)^2}{2st(t-s)}
\right),
\qquad
s<t.
\end{align*}
Here $\Gamma=L+\mathrm{i}\mathbb{R}$ and $\Lambda$ are the same
oriented contours as in Theorem~\ref{thmIntro-fixedTimeKernel}, with
$\log x_1<A<L$.
\end{thm}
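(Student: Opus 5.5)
Starting from finite $N$, I would pass to the limit through the path-space convergence of Theorem~\ref{thm-pathSpaceConv}. The starting point is that, for strictly ordered initial data in $\mathbb{W}_N^\circ$, the log-process $\mathfrak{y}^{(N)}$ is a Doob $h$-transform of $N$ independent Brownian motions with equally spaced drifts, killed at collision. By Karlin--McGregor and the Eynard--Mehta theorem, the space-time configuration at times $t_1<\dots<t_m$ is therefore determinantal, with an extended kernel of the form
\begin{align*}
\mathcal{K}_N(s,u;t,v) = -\mathbf{1}_{\{s<t\}}p_{t-s}(u,v) + \sum_{i,j} p_s\phi_i(u)\,(G^{-1})_{ij}\,p_t\psi_j(v).
\end{align*}
I would follow the strategy of \cite{ExactSolution,KatoriTanemuraMarkov}. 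The finite-$N$ kernel is first written as a double contour integral: the Vandermonde-type harmonic function $\prod_{i<j}\sinh((y_i-y_j)/2)$ makes the biorthogonal system explicit via exponentials $\mathrm{e}^{k y}$. This produces an integrand of the shape
\begin{align*}
\frac{1}{\mathrm{e}^{w-z}-1}\,\prod_{i=1}^N\frac{1-\mathrm{e}^{\mathfrak{y}_i(0)}\mathrm{e}^{-w}\cdot(\cdots)}{1-\mathrm{e}^{\mathfrak{y}_i(0)}\mathrm{e}^{-z}\cdot(\cdots)}
\end{align*}
together with Gaussian factors. The $z$-contour encircles the poles at $\mathfrak{y}_i(0)$ modulo $2\pi\mathrm{i}$, which explains the half-strip $\Lambda$. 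The first step is to derive this finite-$N$ formula for initial data in $\mathbb{W}_N^\circ$, and then extend it to arbitrary $\mathbb{W}_{N,+}$ by continuity in the initial condition. For this extension I would use the Feller property together with the fact that the contour formula is continuous in the zeros, since coinciding points only merge poles.

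The second step is the edge rescaling $\mathfrak{x}=\mathfrak{y}-Nt/2$ and the substitution $\mathsf{x}\to N^{-1}\mathsf{x}$. After shifting $w,z$ by $\log N$-type terms, the product $\prod_i(1-N^{-1}\mathsf{x}_i^{(N)}(0)\mathrm{e}^{-w})$ converges locally uniformly to $\varphi_\upsilon(\mathrm{e}^{-w})$. This uses \eqref{IC-conv-exp} and the homeomorphism $\Upsilon\cong\LP_+$: polynomials with nonnegative zeros converge to the Laguerre--P\'olya function, including the exponential factor carrying $\gamma-\sum x_i$. The remaining $N$-dependent factors, which come from the drifts $(\theta+N-2k+1)/2$, would be combined into a ratio of Gamma-type products. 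I would check that these reduce to the Gaussian exponent $\frac{w^2-z^2}{2t}$ with the $\theta t/2$ and $-(z-w)/2$ shifts.

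The third step is to justify the exchange of limit and integral by dominated convergence along $\Gamma$ and $\Lambda$. On $\Gamma$ the Gaussian factor $\mathrm{e}^{w^2/2s}$ decays like $\mathrm{e}^{-(\operatorname{Im}w)^2/2s}$, and $\varphi_\upsilon(\mathrm{e}^{-w})$ is bounded by $\varphi$ evaluated at $-\mathrm{e}^{-L}$-type real values. On the horizontal rays of $\Lambda$ we have $\mathrm{e}^{-z}<0$, so $1/\varphi(\mathrm{e}^{-z})$ has modulus at most $1$ there, and $\mathrm{e}^{-z^2/2t}$ decays as $\operatorname{Re}z\to-\infty$. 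This gives uniform-in-$N$ bounds, so the finite-$N$ kernels converge to \eqref{limitingMultitimeKernel-Intro} locally uniformly in $(u,v)$. Combined with the convergence in distribution of Theorem~\ref{thm-pathSpaceConv}, this identifies the correlation functions of $\Xi_{t_1,\ldots,t_m}$. I expect the main obstacle here: convergence of correlation functions requires the counting-measure tail control $\sum_i \mathsf{x}_i(t)\le\bm{\gamma}(t)$, so that test functions supported in compact sets see only finitely many particles uniformly in $N$. I would handle this via Laplace functionals and the tightness of $\bm{\gamma}^{(N)}$ supplied by Theorem~\ref{thm-pathSpaceConv}.
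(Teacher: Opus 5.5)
The gap is in your second step. The $N$-dependence does not collapse into the target exponent, and there are no Gamma-type products to combine. Write $\mathcal{K}^N$ for the finite-$N$ extended kernel of $\mathfrak{y}^{(N)}$, take $\theta=0$ (the general case is a deterministic translation), and set $\ell=\log N$ and $x_j^{(N)}=\mathrm{e}^{\mathfrak{y}_j^{(N)}(0)}$. Shift $u\mapsto u+Ns/2+\ell$, $v\mapsto v+Nt/2+\ell$, $w\mapsto w+\ell$, $z\mapsto z+\ell$. Then the factor $\mathrm{e}^{N(w-z)}$ produced by normalizing the product combines with $\mathrm{e}^{(N-1)(z-w)/2}$ and with the $\mathrm{e}^{N(z-w)/2}$ generated by the edge shift, leaving exactly $\mathrm{e}^{-(z-w)/2}$. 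What remains, however, is
\begin{align*}
\mathcal{K}^N\Bigl(s,u+\tfrac{Ns}{2}+\ell;\,t,v+\tfrac{Nt}{2}+\ell\Bigr)
=\frac{G_N(s,u)}{G_N(t,v)}\,\widehat{\mathcal{K}}^N(s,u;t,v),
\qquad
G_N(s,u)\defeq\exp\Bigl(\frac{(u+Ns/2)^2}{2s}\Bigr).
\end{align*}
Here $\widehat{\mathcal{K}}^N$ is \eqref{limitingMultitimeKernel-Intro} with $\varphi_\upsilon$ replaced by $\varphi_{\upsilon^{(N)}}$, where $\upsilon^{(N)}=((N^{-1}x_i^{(N)})_i,N^{-1}\sum_i x_i^{(N)})$, and with the finite-$N$ $z$-contour. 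The prefactor has no limit: already at $s=t$ it equals $\exp\bigl(\frac{u^2-v^2}{2t}+\frac{N(u-v)}{2}\bigr)$. So your assertion that the finite-$N$ kernels converge to \eqref{limitingMultitimeKernel-Intro} is false as stated. The missing idea is to conjugate by $G_N(t,v)/G_N(s,u)$, which leaves every space-time correlation function unchanged. The transition term shows this cannot be avoided: $p_{t-s}(u+Ns/2+\ell,\,v+Nt/2+\ell)=p_{t-s}(u,v+N(t-s)/2)\to0$. The target instead contains $\mathcal{P}_{s,t}(u,v)=\mathrm{e}^{-u^2/(2s)+v^2/(2t)}p_{t-s}(u,v)$, a gauge-transformed heat kernel that is invariant under $(u,v)\mapsto(u+cs,v+ct)$.

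Apart from this, your architecture is the paper's. Your bounds on $\Gamma$ and on the horizontal rays (where $\mathrm{e}^{-z}<0$ gives $|\varphi_{\upsilon^{(N)}}(\mathrm{e}^{-z})|\ge1$) are exactly the ones used there. Two steps are still missing before dominated convergence applies.
\begin{itemize}
\item At finite $N$ the $z$-contour is a bounded contour around the poles $\log(x_j^{(N)}/N)$, whose real parts are not bounded below uniformly in $N$. The restriction $|\operatorname{Im}z|\le\pi$ does reflect $2\pi\mathrm{i}$-periodicity, but the $N$-independent half-strip must be obtained by pushing the left side of a rectangle to $-\infty$ at fixed $N$ and showing its contribution vanishes. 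This uses the Gaussian decay $\mathrm{e}^{-R^2/(2t)+CR}$ together with a bound on $1/|\varphi_{\upsilon^{(N)}}(\mathrm{e}^{-z})|$ on that side.
\item On the vertical side $\operatorname{Re}z=A$ you need $|\varphi_{\upsilon^{(N)}}(\mathrm{e}^{-z})|$ bounded below uniformly in $N$. This is where $A>\log x_1$ is used.
\end{itemize}

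Your derivation of the finite-$N$ kernel differs from the paper's but is sound. You apply Eynard--Mehta directly to the $h$-transformed process, with final functions $\mathrm{e}^{\mu_jy}$ and a Gram matrix that is, up to diagonal factors, Vandermonde in $\mathrm{e}^{-a_i}$. This is a legitimate and more direct alternative to the paper's route through non-colliding bridges ending at $T\bm{\mu}$ with $T\to\infty$ (Proposition~\ref{prop-bridgeMultitimeKernel} and Lemma~\ref{lem-driftedBmMultitimeKernel}), and it yields the same formula. Your extension of that formula to all of $\mathbb{W}_{N,+}$ is unnecessary, because Theorem~\ref{thm-pathSpaceConv} lets you approximate $\upsilon$ by data in $\mathbb{W}_{N,+}^\circ$.
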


Note that the determinantal structure in
Theorems~\ref{thmIntro-fixedTimeKernel} and
\ref{thmIntro-multitimeKernel} holds for every deterministic initial
state in $\Upsilon$, the full state space of the limiting enhanced
Feller process $\mathsf{X}$. The associated Laguerre--P\'olya function $\varphi_\upsilon$, arising
as the limiting reverse characteristic polynomial, appears explicitly
in the kernels and records, through the parameter $\gamma$,
information not determined by the particle configuration. In particular, states with
identical particle coordinates but different values of $\gamma$ yield
different kernel formulas. This distinction is also manifest
dynamically: such states give rise to particle processes with
different laws, despite satisfying the same ISDE
\cite{AssiotisMirsajjadi2026}.

Finally, for the identity initial condition, corresponding to $\upsilon_{\mathrm{Id}}
=
\bigl((0)_{i\in\mathbb{N}},1\bigr)$, 
 our formulas provide  alternative contour-integral representations for correlation 
kernels of the limiting dynamics studied in \cite{Ahn}; see Corollary \ref{cor-identityInitialKernel}. 
The analysis in \cite{Ahn} exploits the special structure of this
initial condition and does not immediately extend to general initial
states.

We next present further consequences of the preceding results. We
begin by introducing  notation that accounts for degenerate states in 
 $\Upsilon$. For such states, some exponential coordinates
may remain identically zero, or equivalently, the corresponding
logarithmic coordinates may remain equal to $-\infty$. 

For $\upsilon=(\bm{x},\gamma)\in\Upsilon$, write
\begin{align*}
\mathfrak{d}(\upsilon)
\defeq
\gamma-\sum_{i=1}^{\infty}x_i,
\qquad
\mathfrak{r}(\upsilon)
\defeq
\#\left\{
i\in\mathbb{N}:x_i>0
\right\},
\end{align*}
and define
\begin{align}\label{activeIndexSet}
\mathcal{I}(\upsilon)
\defeq
\begin{cases}
\llbracket\mathfrak{r}(\upsilon)\rrbracket,
&
\mathfrak{d}(\upsilon)=0
\textnormal{ and }
\mathfrak{r}(\upsilon)<\infty,
\\[1mm]
\mathbb{N},
&
\mathfrak{d}(\upsilon)>0
\textnormal{ or }
\mathfrak{r}(\upsilon)=\infty,
\end{cases}
\end{align}
with the convention that $\{1,\ldots,0\}=\varnothing$. 
We call the coordinates indexed by $\mathcal{I}(\upsilon)$ active. 
As will be shown in Proposition~\ref{prop-positiveCoordinates}, these
are precisely the indices of the coordinates of the  limiting dynamics that are strictly positive at every fixed positive time  and hence give rise to finite-valued logarithmic curves
on $(0,\infty)$.

Throughout, we denote by $\mathbb{P}_{\upsilon}$ the law of the
process $\mathsf{X}(\sbullet)$ from Theorem~\ref{thm-pathSpaceConv}, started at 
$\upsilon\in\Upsilon$, and by 
$\mathbb{E}_{\upsilon}$ the expectation with respect to this law.

We now turn to the Gibbs resampling property of the limiting dynamics. 
We use the standard terminology for Gibbsian line ensembles
\cite{CorwinHammond}.  
The following result establishes the Gibbs resampling property for
arbitrary initial states, extending the corresponding result of
\cite{AssiotisMirsajjadi2026} to initial configurations with possibly
coinciding coordinates.
\begin{cor}\label{corIntro-GibbsProperty}
Let $\theta\in\mathbb{R}$ and
$\upsilon\in\Upsilon$. Consider the process
$\mathsf{X}(\sbullet)$ from Theorem~\ref{thm-pathSpaceConv}, started from
$\upsilon$, and let  $\mathfrak{x}_i(\sbullet)\defeq\log\mathsf{x}_i(\sbullet)$. 
Then the line ensemble
$(\mathfrak{x}_i(t))_{i\in\mathcal{I}(\upsilon),\,t>0}$
satisfies the Brownian Gibbs property on
$\mathcal{I}(\upsilon)\times(0,\infty)$; that is, the Gibbs resampling
property, with respect to the law of the Brownian bridge, holds on every compact time interval contained in
$(0,\infty)$.
In particular, we have
\begin{align}\label{nonCollision}
\mathbb{P}_{\upsilon}
\left(
\mathfrak{x}_i(t)>\mathfrak{x}_{i+1}(t)
\textnormal{ for every }
i,i+1\in\mathcal{I}(\upsilon)
\textnormal{ and every }t>0
\right)
=
1.
\end{align}
\end{cor}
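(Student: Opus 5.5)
The plan is to use the Markov property to reduce to the strictly ordered case already treated in \cite{AssiotisMirsajjadi2026}. Fix $\upsilon\in\Upsilon$ and $\epsilon>0$. By the Markov property of the Feller diffusion $\mathsf{X}$ from Theorem~\ref{thm-pathSpaceConv}, the shifted process $(\mathsf{X}(\epsilon+t))_{t\ge0}$ has law $\mathbb{E}_\upsilon\bigl[\mathbb{P}_{\mathsf{X}(\epsilon)}\bigr]$. It therefore suffices to show the following: $\mathbb{P}_\upsilon$-almost surely, the random state $\mathsf{X}(\epsilon)$ lies in the class of states for which \cite{AssiotisMirsajjadi2026} establishes the Brownian Gibbs property, with the same active index set. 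Once this is shown, the Gibbs property holds on $\mathcal{I}(\upsilon)\times[\epsilon,\infty)$. Letting $\epsilon\downarrow0$ along a sequence then gives it on every compact interval in $(0,\infty)$. The non-collision statement \eqref{nonCollision} follows, since the Gibbs resampling conditional law is a non-intersecting Brownian bridge law, and these bridges are almost surely strictly ordered on the open interval. Strict ordering at the endpoints of the interval is handled by applying the argument on slightly larger intervals.

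\textbf{Step 1.} The active coordinates at $\epsilon$ are positive, and the inactive ones vanish. I would invoke Proposition~\ref{prop-positiveCoordinates}: for $i\in\mathcal{I}(\upsilon)$, $\mathsf{x}_i(\epsilon)>0$ almost surely. In the degenerate case $\mathfrak{d}(\upsilon)=0$, $\mathfrak{r}(\upsilon)<\infty$, it also gives $\mathsf{x}_i\equiv0$ for $i>\mathfrak{r}(\upsilon)$. In that case the process is essentially the finite system \eqref{singularValues-SDE} with $\mathfrak{r}(\upsilon)$ particles in rescaled coordinates, and I would treat it by the finite-dimensional non-colliding Brownian motion representation. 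We must also check that $\mathcal{I}(\mathsf{X}(\epsilon))=\mathcal{I}(\upsilon)$. This holds because $\mathfrak{d}$ stays positive, or $\mathfrak{r}$ stays infinite, at positive times whenever it does so initially; this again comes from Proposition~\ref{prop-positiveCoordinates}.

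\textbf{Step 2.} The coordinates at $\epsilon$ are strictly ordered. This is where I would use Theorem~\ref{thmIntro-fixedTimeKernel}. The point process $\Xi_\epsilon$ is determinantal with a continuous kernel $\mathcal{K}^{(\theta)}_\epsilon$, so its correlation functions are absolutely continuous with respect to Lebesgue measure. In particular, the second factorial moment measure puts no mass on the diagonal $\{u=v\}$. Hence $\Xi_\epsilon$ is almost surely simple, which means the finite values $\mathfrak{x}_i(\epsilon)$, $i\in\mathcal{I}(\upsilon)$, are pairwise distinct. Together with Step 1, this places $\mathsf{X}(\epsilon)$ almost surely among the strictly positive, strictly ordered states, where the Gibbs property and the ISDE of \cite{AssiotisMirsajjadi2026} apply.

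\textbf{Step 3 (main obstacle).} The difficulty is matching the hypotheses of the earlier Gibbs result exactly. That result may require $\mathfrak{d}>0$ or infinitely many positive coordinates, or it may be formulated for the whole index set $\mathbb{N}$. The degenerate finite-rank case, where only $\mathfrak{r}(\upsilon)$ curves are active, may therefore need a separate argument. For that case, I would first try to identify the law of the active curves directly with $\mathfrak{r}(\upsilon)$ independent Brownian motions with drift, conditioned not to intersect, which obviously have the Brownian Gibbs property. Beyond this case, the remaining points are measurability issues: combining the conditional Gibbs statements over a countable family of intervals and index windows.
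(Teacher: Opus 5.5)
Your proposal is correct and follows essentially the same route as the paper. The paper restarts at time $\epsilon$ via the Markov property and uses Proposition~\ref{prop-positiveCoordinates}, whose strict-ordering part is exactly your determinantal simplicity argument, to place $\mathsf{X}(\epsilon)$ among strictly positive, strictly ordered states; it then applies \cite[Proposition~2.13]{AssiotisMirsajjadi2026} to the $r$-particle diffusion from the finite-rank identification when $\mathcal{I}(\upsilon)=\llbracket r\rrbracket$, \cite[Proposition~2.16]{AssiotisMirsajjadi2026} when $\mathcal{I}(\upsilon)=\mathbb{N}$, and finishes with a countable intersection over $[1/n,n]$. One small correction: in case (ii), Proposition~\ref{prop-positiveCoordinates} does not say that $\mathfrak{d}$ stays positive; it says all coordinates are strictly positive at time $\epsilon$, so $\mathfrak{r}(\mathsf{X}(\epsilon))=\infty$, and this is what gives $\mathcal{I}(\mathsf{X}(\epsilon))=\mathbb{N}$.
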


Consequently, even when some active particle coordinates coincide initially,
they separate instantaneously and are strictly ordered at every positive time.

As a further consequence, we obtain an SDE description of the active
coordinates from arbitrary initial configurations. 
\begin{thm}\label{thmIntro-ISDE}
Let $\theta\in\mathbb{R}$ and
$\upsilon=(\bm{x},\gamma)\in\Upsilon$. Consider the process
$\mathsf{X}(\sbullet)$ from Theorem~\ref{thm-pathSpaceConv}, started
from $\upsilon$. Then there exist independent standard Brownian motions
$(\mathsf{w}_i)_{i\in\mathcal{I}(\upsilon)}$ such that, almost surely,
for every $i\in\mathcal{I}(\upsilon)$ and $t\ge0$,
\begin{align}
\mathsf{x}_i(t)
=
x_i
+
\int_0^t\mathsf{x}_i(s)\,\mathrm{d}\mathsf{w}_i(s)
+
\frac{\theta}{2}\int_0^t\mathsf{x}_i(s)\,\mathrm{d}s
+
\int_0^t\sum_{j\in\mathcal{I}(\upsilon)\setminus\{i\}}
\frac{\mathsf{x}_i(s)\mathsf{x}_j(s)}
{\mathsf{x}_i(s)-\mathsf{x}_j(s)}
\,\mathrm{d}s,
\label{ISDE-exponential}
\end{align}
where the singular interaction integral is understood as the following almost surely improper limit
\begin{align*}
\int_0^t\sum_{j\in\mathcal{I}(\upsilon)\setminus\{i\}}
\frac{\mathsf{x}_i(s)\mathsf{x}_j(s)}
{\mathsf{x}_i(s)-\mathsf{x}_j(s)}
\,\mathrm{d}s
\defeq
\lim_{\epsilon\downarrow0}
\int_\epsilon^t\sum_{j\in\mathcal{I}(\upsilon)\setminus\{i\}}
\frac{\mathsf{x}_i(s)\mathsf{x}_j(s)}
{\mathsf{x}_i(s)-\mathsf{x}_j(s)}
\,\mathrm{d}s.
\end{align*}
\end{thm}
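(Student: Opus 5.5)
The proof will first establish the SDE on $[\epsilon,\infty)$ for each $\epsilon>0$, using the Markov property together with the ISDE result of \cite{AssiotisMirsajjadi2026} for strictly ordered initial data. It will then let $\epsilon\downarrow0$ using continuity of paths.

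\textbf{Step 1: the state at a positive time.} Fix $\epsilon>0$. By Corollary~\ref{corIntro-GibbsProperty}, in particular \eqref{nonCollision}, and by Proposition~\ref{prop-positiveCoordinates}, almost surely the active coordinates $(\mathsf{x}_i(\epsilon))_{i\in\mathcal{I}(\upsilon)}$ are strictly positive and strictly ordered. The inactive coordinates remain identically zero. I will also check that $\mathsf{X}(\epsilon)$ again lies in the class of states covered by the ISDE result of \cite{AssiotisMirsajjadi2026}. For this I expect to need $\mathfrak{d}(\mathsf{X}(\epsilon))=0$ exactly when $\mathcal{I}(\upsilon)$ is finite, and this should follow from the structure of $\varphi_{\mathsf{X}(\epsilon)}$ and the kernel description.

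\textbf{Step 2: the shifted SDE and consistency.} By the Markov property of the Feller process $\mathsf{X}$, the shifted process $\mathsf{X}(\epsilon+\cdot)$ has law $\mathbb{P}_{\mathsf{X}(\epsilon)}$. Applying the ISDE theorem of \cite{AssiotisMirsajjadi2026} conditionally gives Brownian motions $\mathsf{w}^{\epsilon}_i$ on $[\epsilon,\infty)$ such that \eqref{ISDE-exponential} holds between $\epsilon$ and $t$, with an absolutely convergent drift integral.

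To avoid constructing the driving noise through conditional laws, I will identify the martingale intrinsically:
\[
M_i(t)-M_i(\epsilon)\defeq \mathsf{x}_i(t)-\mathsf{x}_i(\epsilon)-\int_\epsilon^t\Big(\tfrac{\theta}{2}\mathsf{x}_i+\sum_{j\ne i}\tfrac{\mathsf{x}_i\mathsf{x}_j}{\mathsf{x}_i-\mathsf{x}_j}\Big)\mathrm{d}s .
\]
This process is a continuous local martingale on $[\epsilon,\infty)$ with $\langle M_i,M_j\rangle_t-\langle M_i,M_j\rangle_\epsilon=\delta_{ij}\int_\epsilon^t\mathsf{x}_i^2\,\mathrm{d}s$. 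These are path functionals, so the objects for different $\epsilon$ are consistent with one another.

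\textbf{Step 3: letting $\epsilon\downarrow0$ (the main obstacle).} The difficulty is that the drift integrand may not be integrable near $0$ when coordinates coincide initially. The route is to avoid needing integrability. The bracket $\int_0^t\mathsf{x}_i^2\,\mathrm{d}s$ is finite because $\mathsf{x}_i$ is continuous on $[0,t]$. Hence the local martingale $M_i$, defined on $(0,\infty)$, has bounded quadratic variation near $0$ and converges almost surely as $\epsilon\downarrow0$. This gives a continuous local martingale on $[0,\infty)$ with the same bracket.

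The continuity of $\mathsf{x}_i$ at $0$, together with $\mathsf{x}_i(0)=x_i$ by Theorem~\ref{thm-pathSpaceConv}, then forces the drift integral over $[\epsilon,t]$ to converge as $\epsilon\downarrow0$. This is precisely the improper limit asserted in the theorem.

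\textbf{Step 4: the driving Brownian motions.} Define
\[
\mathsf{w}_i(t)=\int_0^t\mathsf{x}_i(s)^{-1}\,\mathrm{d}M_i(s).
\]
This is well defined because $\int_0^t\mathsf{x}_i^{-2}\,\mathrm{d}\langle M_i\rangle=t$, even though $\mathsf{x}_i$ may vanish at $0$. Lévy's characterization, using the diagonal brackets from Step 2, shows that the $\mathsf{w}_i$ are independent standard Brownian motions, and \eqref{ISDE-exponential} follows.

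A delicate point, which I would check first, is that the summation over infinitely many $j$ converges locally uniformly for $s\ge\epsilon$. I expect to inherit this from \cite{AssiotisMirsajjadi2026}, using the bound $\sum_j \mathsf{x}_j\le\bm{\gamma}$.
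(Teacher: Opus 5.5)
Your argument is correct and follows the paper's overall structure. Both restart at time $\epsilon$ via the Markov property, apply \cite[Proposition~2.20]{AssiotisMirsajjadi2026} conditionally, define the local martingale intrinsically as a path functional, and glue the compatible pieces over rational $\epsilon$. The difference is the order in which you extend to time $0$.

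The paper normalizes first. It sets $\mathsf{w}_i^{(\epsilon)}(t)=\int_\epsilon^t\mathsf{x}_i^{-1}\,\mathrm{d}\mathsf{m}_i$, so that $\mathsf{w}_i^{(\delta)}(t)-\mathsf{w}_i^{(\epsilon)}(t)$ are exact Brownian increments with deterministic variance. This gives an $L^4$-Cauchy family with no localization. The paper then needs Kolmogorov's theorem and a passage to the limit in the martingale and bracket identities. Only after that does it obtain convergence of $\int_\epsilon^t\mathsf{x}_i\,\mathrm{d}\mathsf{w}_i$, and hence the improper limit of the interaction integral.

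You instead extend $M_i$ itself, using that $\int_0^t\mathsf{x}_i^2\,\mathrm{d}s<\infty$. You then obtain $\mathsf{w}_i$ in one stroke as a stochastic integral from $0$, with a single use of L\'evy's theorem; this part is cleaner. The cost is that your key claim, almost sure convergence of $M_i(\epsilon)$ as $\epsilon\downarrow0$, is a backward-in-time statement. It is not the textbook result, which concerns $t\to\infty$ on $\{\langle M\rangle_\infty<\infty\}$, so you should prove it. Because the bracket is random you must localize:
\begin{itemize}
\item stop at $\tau_K=\inf\{s\ge0:\mathsf{x}_i(s)\ge K\}$;
\item apply Doob's $L^2$ inequality on the blocks $[2^{-n-1},2^{-n}]$, which gives the bound $4K^2 2^{-n-1}$;
\item use Borel--Cantelli, then let $K\to\infty$.
\end{itemize}
The same localization shows that $M_i(\cdot)-M_i(0)$ is a genuine local martingale on $[0,\infty)$ for the right-continuous filtration. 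That is what your definition of $\mathsf{w}_i$ from time $0$ requires.

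For $\mathcal{I}(\upsilon)=\llbracket r\rrbracket$ the paper does not restart at all. The finite-rank identification in the proof of Proposition~\ref{prop-positiveCoordinates} shows that the active coordinates are the $r$-particle diffusion started from $(x_1,\dots,x_r)$. The finite-dimensional theory then gives the equation from time $0$, including for coinciding initial coordinates. If you keep your uniform restart argument, the fact $\mathfrak{d}(\mathsf{X}(\epsilon))=0$ that you need does not come from $\varphi_{\mathsf{X}(\epsilon)}$ or the kernel. It comes from that same identification: under it, $\bm{\gamma}(t)=\sum_{i\le r}\mathsf{x}_i(t)$ almost surely for all $t$.
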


The corresponding logarithmic formulation follows from It\^o's formula.

\begin{cor}\label{corIntro-logISDE}
Under the assumptions of Theorem~\ref{thmIntro-ISDE}, almost surely,
for every $i\in\mathcal{I}(\upsilon)$, $t>0$, and $0<\epsilon<t$, we have
\begin{align}
\mathfrak{x}_i(t)
=
\mathfrak{x}_i(\epsilon)
+
\mathsf{w}_i(t)-\mathsf{w}_i(\epsilon)
+
\frac{\theta-1}{2}(t-\epsilon)
+
\int_\epsilon^t\sum_{j\in\mathcal{I}(\upsilon)\setminus\{i\}}
\frac{1}
{\mathrm{e}^{\mathfrak{x}_i(s)-\mathfrak{x}_j(s)}-1}
\,\mathrm{d}s.
\label{logISDE-positiveTime}
\end{align}
If 
$x_i>0$
for every $i\in\mathcal{I}(\upsilon)$, then
\eqref{logISDE-positiveTime} extends to $\epsilon=0$:
\begin{align}
\mathfrak{x}_i(t)
=
\log x_i
+
\mathsf{w}_i(t)
+
\frac{\theta-1}{2}t
+
\int_0^t\sum_{j\in\mathcal{I}(\upsilon)\setminus\{i\}}
\frac{1}
{\mathrm{e}^{\mathfrak{x}_i(s)-\mathfrak{x}_j(s)}-1}
\,\mathrm{d}s,
\label{logISDE-timeZero}
\end{align}
where the singular interaction integral is understood  as the following almost surely improper limit 
\begin{align*}
\int_0^t\sum_{j\in\mathcal{I}(\upsilon)\setminus\{i\}}
\frac{1}
{\mathrm{e}^{\mathfrak{x}_i(s)-\mathfrak{x}_j(s)}-1}
\,\mathrm{d}s
\defeq
\lim_{\epsilon\downarrow0}
\int_\epsilon^t\sum_{j\in\mathcal{I}(\upsilon)\setminus\{i\}}
\frac{1}
{\mathrm{e}^{\mathfrak{x}_i(s)-\mathfrak{x}_j(s)}-1}
\,\mathrm{d}s.
\end{align*}
\end{cor}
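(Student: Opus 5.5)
Throughout, we work on the almost sure event supplied by Theorem~\ref{thmIntro-ISDE}, on which \eqref{ISDE-exponential} holds for every $i\in\mathcal{I}(\upsilon)$ and every $t\ge0$. We also use the event of Proposition~\ref{prop-positiveCoordinates}: the active coordinates are strictly positive at every positive time, so $\mathsf{x}_i(s)>0$ for all $s>0$ and $i\in\mathcal{I}(\upsilon)$. By \eqref{nonCollision} in Corollary~\ref{corIntro-GibbsProperty}, they are moreover strictly ordered. The logarithm $\mathfrak{x}_i=\log\mathsf{x}_i$ is therefore finite and continuous on $(0,\infty)$, and the plan is to apply It\^o's formula on $[\epsilon,t]$.

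\textbf{Step 1: the case $0<\epsilon<t$.} Fix $0<\epsilon<t$. Subtracting \eqref{ISDE-exponential} at times $t$ and $\epsilon$ gives a genuine semimartingale decomposition of $\mathsf{x}_i$ on $[\epsilon,t]$. The drift integrand there is locally integrable, because it is continuous on $[\epsilon,t]$ under the separation above, and the improper limits at $t$ and at $\epsilon$ subtract to an ordinary integral. On $[\epsilon,t]$ the path $\mathsf{x}_i$ is continuous and strictly positive, so it stays in a compact subset of $(0,\infty)$. Hence It\^o's formula for $\log$, which is smooth there, applies without localization, with $\mathrm{d}\langle\mathsf{x}_i\rangle_s=\mathsf{x}_i(s)^2\,\mathrm{d}s$. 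This yields
\begin{align*}
\mathfrak{x}_i(t)-\mathfrak{x}_i(\epsilon)=\mathsf{w}_i(t)-\mathsf{w}_i(\epsilon)+\frac{\theta}{2}(t-\epsilon)-\frac12(t-\epsilon)+\int_\epsilon^t\sum_{j\in\mathcal{I}(\upsilon)\setminus\{i\}}\frac{\mathsf{x}_j(s)}{\mathsf{x}_i(s)-\mathsf{x}_j(s)}\,\mathrm{d}s .
\end{align*}
Since $\mathsf{x}_j/(\mathsf{x}_i-\mathsf{x}_j)=1/(\mathrm{e}^{\mathfrak{x}_i-\mathfrak{x}_j}-1)$, this is exactly \eqref{logISDE-positiveTime}.

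\textbf{Step 1a: exchanging sum and integral.} When $\mathcal{I}(\upsilon)=\mathbb{N}$, we must justify dividing the drift by $\mathsf{x}_i$ inside an infinite sum. The sum in \eqref{ISDE-exponential} converges for each $s>0$; dividing termwise by $\mathsf{x}_i(s)>0$ preserves that convergence, so this step is harmless. Some care is still needed: the improper-limit convention of Theorem~\ref{thmIntro-ISDE} only guarantees the existence of the integral as a limit. The cleanest route is the following. The quantity
\begin{align*}
D(s)\defeq\sum_{j\neq i}\frac{\mathsf{x}_i(s)\mathsf{x}_j(s)}{\mathsf{x}_i(s)-\mathsf{x}_j(s)}
\end{align*}
satisfies $\mathsf{x}_i(t)-\mathsf{x}_i(\epsilon)-(\text{martingale and linear terms})=\int_\epsilon^t D(s)\,\mathrm{d}s$. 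Hence $\int_\epsilon^t D\,\mathrm{d}s$ is a continuous finite-variation process. Apply It\^o's formula to $\log$ of the continuous semimartingale $\mathsf{x}_i$ on $[\epsilon,t]$ and use $\int_\epsilon^t \mathsf{x}_i(s)^{-1}D(s)\,\mathrm{d}s$. This holds because $1/\mathsf{x}_i$ is bounded and continuous on $[\epsilon,t]$ and $D$ is integrable there. Integrability of $D$ on $[\epsilon,t]$ should follow from the proof of Theorem~\ref{thmIntro-ISDE}; if needed, it can instead be deduced from the Gibbs/ISDE result of \cite{AssiotisMirsajjadi2026}, applied to the process restarted at time $\epsilon/2$ from a strictly ordered, strictly positive state via the Markov property.

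\textbf{Step 2: extension to $\epsilon=0$.} Assume now that $x_i>0$ for every active $i$. Then $\mathsf{x}_i(0)=x_i>0$, and by continuity $\mathfrak{x}_i(\epsilon)\to\log x_i$ and $\mathsf{w}_i(\epsilon)\to0$ as $\epsilon\downarrow0$. Rearranging \eqref{logISDE-positiveTime} expresses $\int_\epsilon^t(\cdots)\,\mathrm{d}s$ as a combination of terms that all converge. Hence the improper limit exists and \eqref{logISDE-timeZero} holds. This step needs no control of the interaction near time $0$, which matters because coordinates may coincide initially and the integrand is then genuinely singular at $s=0$.

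\textbf{Main obstacle.} The only delicate point is Step 1a: making rigorous that the It\^o transformation commutes with the infinite, possibly only conditionally summable, interaction sum on $[\epsilon,t]$. I expect it to reduce to local integrability of $D$ away from time $0$, which should be available from the construction in Theorem~\ref{thmIntro-ISDE}.
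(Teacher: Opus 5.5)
Your proposal is correct and follows essentially the same route as the paper: It\^o's formula for $\log\mathsf{x}_i$ on $[\epsilon,t]$, then rearranging and letting $\epsilon\downarrow0$ using $\mathfrak{x}_i(\epsilon)\to\log x_i$ and $\mathsf{w}_i(\epsilon)\to0$. The paper applies It\^o's formula directly to \eqref{ISDE-positiveTime-exp}, which is proved in Theorem~\ref{thmIntro-ISDE} with an ordinary integral on $[\epsilon,t]$ by restarting at time $\epsilon$ and using \cite[Proposition~2.20]{AssiotisMirsajjadi2026}; this disposes of the obstacle you flag in Step~1a, and positivity simultaneously for all $s>0$ comes from Corollary~\ref{corIntro-GibbsProperty}, not from the fixed-time Proposition~\ref{prop-positiveCoordinates}.
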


When some active initial coordinates vanish, the logarithmic process may
be viewed as entering from $-\infty$: all active logarithmic coordinates
are finite at every positive time, and the logarithmic equation holds on
every interval bounded away from zero.

\subsection{Outline of the proofs}

The derivation of the limiting correlation kernels proceeds by first
obtaining suitable double-contour formulas for the fixed-time and multitime
correlation kernels of the finite-dimensional processes and then
analyzing these formulas under the edge scaling leading to the limiting
dynamics.

It is worth highlighting that, for the identity initial condition
studied in \cite{Ahn}, the correlation kernel of the logarithms of the
squared singular values can be related, through a reciprocal
relation, to that of Dyson Brownian motion started from equidistant
initial conditions; see
\cite{Jones-OConnell,Katori-kernel,Takahashi-Katori}. The latter has a particularly tractable
form and is well suited to asymptotic analysis.
   For general initial states, no analogous reduction is apparent. A 
different structure nevertheless emerges: the initial condition is incorporated through the associated Laguerre--P\'olya
function $\varphi_{\upsilon}$.

For the fixed-time kernel, we begin with non-colliding Brownian bridges
having centered arithmetic terminal points. Adapting the residue
calculation in the proof of
\cite[Theorem~2.2]{Johansson2004} to these terminal points yields a
double-contour representation for a correlation kernel of the bridge
ensemble. Choosing the terminal configuration to scale with the
terminal time and subsequently taking the infinite-time limit gives the
desired finite-$N$ kernel.

Passing from this formula to the infinite-particle kernel requires
care: both the poles and the natural integration contour depend on
$N$. Moreover, after the edge scaling, the finite-$N$ formula contains
exponential factors of the form $\mathrm{e}^{N(w-z)}$ that do not
have limits when considered separately. Combining these factors and
conjugating the kernel by a factor that leaves its correlation
functions unchanged removes the apparent divergence. The dependence
on the finite-$N$ initial condition then takes the form
\begin{equation*}
\frac{\varphi_{\upsilon^{(N)}}(\mathrm{e}^{-w})}
     {\varphi_{\upsilon^{(N)}}(\mathrm{e}^{-z})}.
\end{equation*}
The locally uniform convergence of the entire functions cannot yet be
used directly because the contours depend on $N$.  
We next deform the $N$-dependent contour enclosing the finite-$N$
poles to a fixed half-strip contour $\Lambda$. The left vertical side
can be sent to $-\infty$, since its contribution vanishes. Thus all
finite-$N$ kernels are represented on the same pair of contours.
The convergence $\upsilon^{(N)}\to\upsilon$ in $\Upsilon$ implies
$\varphi_{\upsilon^{(N)}}\longrightarrow\varphi_\upsilon$ locally uniformly, and
hence pointwise convergence of the integrands on these contours,
where the limiting denominator does not vanish. Because $\Lambda$
is unbounded, pointwise convergence alone does not suffice. We obtain
an integrable majorant uniform in $N$: Gaussian decay controls the
vertical $w$-contour and the unbounded horizontal parts of $\Lambda$,
while the product structure of $\varphi_{\upsilon^{(N)}}$, together
with the uniform bound on $N^{-1}\sum_j x_j^{(N)}$, controls the ratio
of entire functions. Dominated convergence then yields locally
uniform convergence of the finite-$N$ kernels to the kernel in
Theorem~\ref{thmIntro-fixedTimeKernel}.

The multitime proof follows the same limiting argument. Starting from
the Eynard--Mehta formula, we derive a double-contour representation
for the extended kernel of the non-colliding bridge ensemble and take
the infinite-terminal-time limit. After edge scaling, the same
cancellation and contour deformation reduce the $N$-dependence to
$\varphi_{\upsilon^{(N)}}$ on fixed contours. The preceding uniform
bounds then justify passage to the limit in the double integral; the
time-ordering term converges separately. This yields the extended
kernel in Theorem~\ref{thmIntro-multitimeKernel}. Finally, locally
uniform convergence of the kernels, together with the convergence of
the dynamics in Theorem~\ref{thm-pathSpaceConv}, establishes the
determinantal structure of the limiting fixed-time and multitime point
processes.

A notable feature of the resulting formulas is the direct appearance of
the Laguerre--P\'olya entire function $\varphi_\upsilon$, encoding the 
dependence of the kernels on the initial state. For the identity initial condition,
these formulas provide alternative representations of the kernels
obtained in \cite{Ahn}.

We finally turn to the Gibbs resampling and ISDE results. We first
establish positivity of the active coordinates and use the
determinantal structure to prove their strict ordering at each fixed
positive time. These properties, together with the Markov property and
the results of \cite{AssiotisMirsajjadi2026}, allow us to extend the
Gibbs resampling property and the associated ISDE description to
arbitrary initial data. In particular, this provides the ISDE
formulation for configurations with coinciding particle coordinates;
the singular interaction at time zero is understood as an improper
time integral.

The explicit kernels may further provide a route to obtaining fixed-time
rigidity estimates which, together with the path regularity provided by
the Gibbs resampling property, could then imply that the law of the
dynamics is concentrated on a suitable rigid-path space in which uniqueness
may be investigated.

It would be of interest to establish analogous determinantal
structures, including explicit correlation kernels, and suitable
Gibbs resampling properties for the other two limiting dynamics
studied in \cite{AssiotisMirsajjadi2026}. We leave these questions
for future work.

\paragraph*{Acknowledgements}
The author thanks Theodoros Assiotis for earlier discussions
concerning the problem and for comments on an initial draft of the
manuscript. The author also thanks Martin Dindo\v{s} for his feedback.
Support from the EPSRC through a PhD studentship during the author's
doctoral studies is acknowledged.

\section{Correlation kernel}

\subsection{Fixed-time correlation kernel}

We begin by deriving the fixed-time correlation kernel for the
corresponding finite-dimensional dynamics. We first identify the
fixed-time distribution of the non-colliding Brownian motions with drift
as the infinite-horizon limit of the corresponding
distribution of non-colliding Brownian bridges, and then pass to the
limit in the associated correlation kernels.

Let us denote by $(p_t)_{t\ge0}$ the heat kernel:
\begin{align*}
p_t(x,y)
=
\frac{1}{\sqrt{2\pi t}}
\exp\left(
-\frac{(x-y)^2}{2t}
\right),
\quad 
x,y\in\mathbb{R},
\end{align*}
namely, the standard Brownian transition density function with respect
to Lebesgue measure.

\begin{lem}\label{lem-driftedBmKernel}
Let $N\in\mathbb{N}$ and take
$\bm{a}=(a_i)_{i=1}^N\in\mathbb{W}_N^\circ$.
Consider non-colliding drifted Brownian motions started from
$\bm{a}$ with drift vector
$\bm{\mu}=(\mu_i)_{i=1}^N\in\mathbb{W}_N^\circ$, where
\begin{align}\label{centeredDrifts}
\mu_i
=
\frac{N+1}{2}-i,
\quad 
i\in\Ibr{N}.
\end{align}
Their positions at time $t>0$ form a determinantal point process with
correlation kernel, for $u,v\in\mathbb R$, given by
\begin{align}\label{driftedBmKernel}
\mathcal{K}_t^N(u,v)
={}&
\frac{1}{(2\pi\mathrm{i})^2t}
\int_{\Gamma}dw
\int_{\Lambda}dz\,
\frac{1}{\mathrm{e}^{w-z}-1}
\exp\left(
\frac{(w-v)^2-(z-u)^2}{2t}
+
\frac{N-1}{2}(z-w)
\right)
\nonumber\\
&\quad \times
\prod_{j=1}^{N}
\frac{\mathrm{e}^w-\mathrm{e}^{a_j}}
{\mathrm{e}^z-\mathrm{e}^{a_j}}.
\end{align}
Here $\Gamma=L+\mathrm{i}\mathbb{R}$ is oriented upward, and
$\Lambda$ is a positively oriented simple closed contour enclosing
$a_1,\ldots,a_N$, but no other poles of the $z$-integrand, and lying
strictly to the left of $\Gamma$.
\end{lem}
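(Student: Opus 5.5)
The plan follows the strategy outlined in the introduction. First obtain a double-contour kernel for non-colliding Brownian bridges, then send the bridge horizon to infinity.

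\textbf{Step 1: bridges with arithmetic endpoints.} Fix $T>t$. Consider $N$ non-colliding Brownian bridges on $[0,T]$ from $\bm{a}$ to the terminal configuration $\bm{b}^{(T)}=(T\mu_i)_{i=1}^N$, with the drifts $\mu_i$ as in \eqref{centeredDrifts}. By Karlin--McGregor, their positions at time $t$ have density
\begin{align*}
\frac{\det(p_t(a_i,y_j))\det(p_{T-t}(y_j,T\mu_k))}{\det(p_T(a_i,T\mu_k))},
\end{align*}
which is a biorthogonal ensemble, so it is determinantal. The endpoints $T\mu_k$ form an arithmetic progression with step $T$. Hence
\begin{align*}
\det\bigl(p_{T-t}(y_j,T\mu_k)\bigr)\propto \prod_j \mathrm{e}^{-y_j^2/(2(T-t))}\,\det\bigl(\mathrm{e}^{y_j T\mu_k/(T-t)}\bigr),
\end{align*}
and the last factor is a Vandermonde determinant in $\mathrm{e}^{y_jT/(T-t)}$ times $\mathrm{e}^{-\frac{N-1}{2}\frac{T}{T-t}\sum_j y_j}$. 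This is exactly the structure Johansson exploits in \cite[Theorem~2.2]{Johansson2004} for equispaced endpoints. Adapting his residue computation, I will write the kernel as a double integral: the $z$-contour encircles the points $a_j$ and collects residues, and the $w$-integral over a vertical line $\Gamma$ arises from representing $p_t$ via a Gaussian integral. This gives an integrand of the form
\begin{align*}
\frac{1}{\mathrm{e}^{(w-z)c_T}-1}\,\prod_j\frac{\mathrm{e}^{c_Tw}-\mathrm{e}^{c_Ta_j}}{\mathrm{e}^{c_Tz}-\mathrm{e}^{c_Ta_j}}\times(\text{Gaussians}),
\end{align*}
with $c_T=T/(T-t)\to1$.

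\textbf{Step 2: $T\to\infty$.} The non-colliding drifted Brownian motions arise as the $T\to\infty$ limit of these bridges, since Doob's $h$-transform with $h$ equal to the $h$-function for drift $\bm\mu$ is the limit of bridge conditioning. Concretely, the ratio $\det(p_{T-t}(y,T\mu))/\det(p_T(a,T\mu))$ converges to
\begin{align*}
\det\bigl(\mathrm{e}^{\mu_ky_j}\bigr)\,\mathrm{e}^{-|\mu|^2t/2}\big/\det\bigl(\mathrm{e}^{\mu_ka_i}\bigr),
\end{align*}
which is the transition density of the $h$-transformed process. Therefore the fixed-time laws converge, and it suffices to pass to the limit in the kernel. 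As $T\to\infty$ we have $c_T\to1$, and the integrand converges to that of \eqref{driftedBmKernel}, with the factor $\mathrm{e}^{\frac{N-1}{2}(z-w)}$ coming from the centering of $\bm\mu$. The Gaussian factors $\mathrm{e}^{((w-v)^2-(z-u)^2)/2t}$ come from the two heat kernels, with possible conjugation by $\mathrm{e}^{f(u)-f(v)}$, which does not change correlation functions.

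\textbf{Main obstacle.} The main difficulty is the residue bookkeeping in Step 1: checking that the only $z$-poles enclosed are the $a_j$ (the poles of $1/(\mathrm{e}^{w-z}-1)$ lie on $z=w+2\pi\mathrm{i}k$, i.e.\ on $\Gamma$ shifted, and hence stay outside $\Lambda$ because $\Lambda$ lies to the left of $\Gamma$). One must also justify dominated convergence along the unbounded line $\Gamma$ uniformly in large $T$; the Gaussian factor $\mathrm{e}^{w^2/2t}$ decays along vertical lines, which gives this. As an alternative check, I would verify directly that the limiting kernel reproduces the biorthogonal functions $\det(p_t(a_i,\cdot))$ and $\mathrm{e}^{\mu_k\cdot}$ by computing the $z$-residues at the $a_j$, which yields a finite sum over $j$. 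This confirms that it is a valid correlation kernel.
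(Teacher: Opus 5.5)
Your proposal follows the paper's proof: non-colliding bridges from $\bm{a}$ to $T\bm{\mu}$, Johansson's double-contour kernel, the limit $T\to\infty$ by dominated convergence using the Gaussian decay along $\Gamma$, and identification of the limit through convergence of the Karlin--McGregor densities to the $h$-transformed density (the paper makes this rigorous via Scheff\'e's lemma and $L^1$ convergence of the correlation functions). One correction to Step 1: the residues at the $a_j$ come from Cramer's rule applied to $\bigl(p_T(a_i,T\mu_k)\bigr)_{i,k}$, whose Vandermonde structure is in $\mathrm{e}^{a_i}$ (the $w$-integral comes from writing the terminal factor $p_{T-t}(\cdot,T\mu_k)$ as a complex Gaussian average of $p_T(w,T\mu_k)$), so the scale in $1/(\mathrm{e}^{c(w-z)}-1)$ and in the product is exactly $\beta/T=1$ rather than $T/(T-t)$ --- this is harmless for the limit.
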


\begin{proof}
We first show that, at every fixed time $t>0$, the distribution of the
process of interest arises as the limit of the corresponding
distribution of non-colliding Brownian bridges. Let $T>t$, and consider
$N$ Brownian bridges on the time interval $[0,T]$, starting from
$\bm{a}=(a_i)_{i=1}^N\in\mathbb{W}_N^\circ$, ending at
$\bm{b}^{(T)}
=\big(b_i^{(T)}\big)_{i=1}^N\in\mathbb{W}_N^\circ$,
and conditioned not to intersect.
By the Karlin--McGregor formula \cite{KarlinMcGregor}, their positions
$\bm{x}=(x_i)_{i=1}^N$ at time $t$ have density with respect
to Lebesgue measure on $\mathbb{W}_N^\circ$ given by
\begin{align*}
\frac{
\det\left(p_t(a_i,x_j)\right)_{i,j=1}^{N}
\det\left(p_{T-t}(x_i,b_j^{(T)})\right)_{i,j=1}^{N}
}{
\det\left(p_T(a_i,b_j^{(T)})\right)_{i,j=1}^{N}
}.
\end{align*}
This is a biorthogonal ensemble and therefore a determinantal point
process.

Now, let us take the endpoints as
$\bm{b}^{(T)}=T\bm{\mu}$.
The transition density from $\bm{a}$ at time $0$ to
$\bm{x}$ at time $t$ is then equal to
\begin{align*}
q_t^{(T)}(\bm{a},\bm{x})
\defeq
\det\left(p_t(a_i,x_j)\right)_{i,j=1}^{N}
\frac{
\det\left(p_{T-t}(x_i,T\mu_j)\right)_{i,j=1}^{N}
}{
\det\left(p_T(a_i,T\mu_j)\right)_{i,j=1}^{N}
}.
\end{align*}
Using the Gaussian expression for the heat kernel, we have
\begin{align*}
\det\left(
p_{T-t}(x_i,T\mu_j)
\right)_{i,j=1}^{N}
={}&
(2\pi(T-t))^{-N/2}
\exp\left(
-\frac{1}{2(T-t)}
\sum_{i=1}^{N}x_i^2
\right)
\\
&\quad \times
\exp\left(
-\frac{T^2}{2(T-t)}
\sum_{j=1}^{N}\mu_j^2
\right)
\det\left(
\mathrm{e}^{\frac{T}{T-t}\mu_jx_i}
\right)_{i,j=1}^{N},
\end{align*}
and
\begin{align*}
\det\left(
p_T(a_i,T\mu_j)
\right)_{i,j=1}^{N}
={}&
(2\pi T)^{-N/2}
\exp\left(
-\frac{1}{2T}
\sum_{i=1}^{N}a_i^2
\right)
\exp\left(
-\frac{T}{2}
\sum_{j=1}^{N}\mu_j^2
\right)
\det\left(
\mathrm{e}^{\mu_ja_i}
\right)_{i,j=1}^{N}.
\end{align*}
Consequently, we have, as $T\to\infty$,
\begin{align*}
\frac{
\det\left(p_{T-t}(x_i,T\mu_j)\right)_{i,j=1}^{N}
}{
\det\left(p_T(a_i,T\mu_j)\right)_{i,j=1}^{N}
}
\longrightarrow
\mathrm{e}^{-\frac{t}{2}|\bm{\mu}|^2}
\frac{
h_{\bm{\mu}}(\bm{x})
}{
h_{\bm{\mu}}(\bm{a})
},
\end{align*}
where
\begin{align*}
h_{\bm{\mu}}(\bm{x})
\defeq
\det\left(
\mathrm{e}^{\mu_jx_i}
\right)_{i,j=1}^{N}.
\end{align*}
It follows that
\begin{align}\label{driftedBmTransition}
\lim_{T\to\infty}
q_t^{(T)}(\bm{a},\bm{x})
=
\mathrm{e}^{-\frac{t}{2}|\bm{\mu}|^2}
\frac{
h_{\bm{\mu}}(\bm{x})
}{
h_{\bm{\mu}}(\bm{a})
}
\det\left(
p_t(a_i,x_j)
\right)_{i,j=1}^{N}.
\end{align}
The right-hand side of \eqref{driftedBmTransition} is exactly the
transition density of non-colliding Brownian motions with drift vector
$\bm{\mu}$, started from $\bm{a}$
\cite{BBO,Jones-OConnell}.
Since the right-hand side of \eqref{driftedBmTransition} is a
probability density, Scheff\'e's lemma
\cite[Section~5.10]{Williams1991} implies convergence in $L^1$ of the
fixed-time joint densities.

We now obtain their correlation kernel. More generally, consider the
terminal points
\begin{align}\label{multitimeEndpoints}
b_i
=
\beta\mu_i
=
\beta\left(
\frac{N+1}{2}-i
\right),
\quad 
i\in\Ibr{N},
\quad 
\textnormal{ with }\beta>0.
\end{align}
Following the residue calculation in the proof of
\cite[Theorem~2.2]{Johansson2004}, applied to the centered endpoints
above (although this theorem is formulated for an odd
number of particles, the residue calculation underlying the contour
representation is valid for arbitrary $N\in\mathbb{N}$), a correlation kernel is given by
\begin{align}\label{JohKernel}
\mathcal{K}_t^{N,T}(u,v)
={}&
\frac{
\beta\mathrm{e}^{(u^2-v^2)/(2(T-t))}
}{
(2\pi\mathrm{i})^2tT
}
\int_{\Gamma}dw
\int_{\Lambda}dz\,
\frac{1}{
\mathrm{e}^{\beta(w-z)/T}-1
}
\nonumber\\
&\quad \times
\exp\left(
\frac{T-t}{2tT}
\left[
\left(
w-\frac{T}{T-t}v
\right)^2
-
\left(
z-\frac{T}{T-t}u
\right)^2
\right]
+
\frac{\beta(N-1)}{2T}(z-w)
\right)
\nonumber\\
&\quad \times
\prod_{j=1}^{N}
\frac{
\mathrm{e}^{\beta w/T}
-
\mathrm{e}^{\beta a_j/T}
}{
\mathrm{e}^{\beta z/T}
-
\mathrm{e}^{\beta a_j/T}
}.
\end{align}
Here $\Gamma=L+\mathrm{i}\mathbb{R}$ is oriented upward, and
$\Lambda$ is a positively oriented simple closed contour surrounding
$a_1,\ldots,a_N$, but no other poles of the $z$-dependent product, and
lying strictly to the left of $\Gamma$.

To obtain the desired kernel, we set $\beta=T$ and choose an admissible
contour $\Lambda$, independent of $T$, contained in
$\{z\in\mathbb{C}:|\operatorname{Im}z|<\pi\}$.
We then let $T\to\infty$. Observe that the integrand converges pointwise
to the corresponding limiting integrand. Moreover, writing
$w=L+\mathrm{i}\tau$, for all sufficiently large $T$,
\begin{align*}
\left|
\exp\left(
\frac{T-t}{2tT}
\left(
w-\frac{T}{T-t}v
\right)^2
\right)
\right|
\le
C\mathrm{e}^{-c_0\tau^2}
\end{align*}
for some constants $C,c_0>0$ independent of $T$. Since $\Lambda$ is compact,
is contained in $|\operatorname{Im}z|<\pi$, surrounds but does not intersect
the points $a_1,\ldots,a_N$, and lies strictly to the left of
$\Gamma$, all the remaining factors are uniformly bounded on
$\Gamma\times\Lambda$ for all sufficiently large $T$. Hence, the
integrand is dominated by an integrable function independent of $T$.

The preceding estimates are uniform when $(u,v)$ ranges over a compact
subset of $\mathbb{R}^2$. The dominated convergence theorem then implies
\begin{align*}
\mathcal{K}_t^{N,T}(u,v)
\longrightarrow
\mathcal{K}_t^N(u,v),
\quad 
T\to\infty,
\end{align*}
locally uniformly in $(u,v)$, where $\mathcal{K}_t^N$ is the kernel in
\eqref{driftedBmKernel}. Consequently, the determinantal correlation
functions of the bridge ensembles converge locally uniformly to
\begin{align}\label{det(K)}
\det\left(
\mathcal{K}_t^N(u_i,u_j)
\right)_{i,j=1}^{m},
\quad 
m\in\Ibr{N}.
\end{align}

On the other hand, the $L^1$ convergence of the joint densities
established above implies convergence in $L^1$ of the corresponding
correlation functions to those of the non-colliding Brownian motions
with drift. The two limits must therefore agree almost everywhere.
Thus, for every $m\in\Ibr{N}$, the correlation function of the latter
process is given by the determinant in \eqref{det(K)}, and
$\mathcal{K}_t^N$ is a correlation kernel for it. This completes the
proof.
\end{proof}

\subsubsection{Proof of Theorem~\ref{thmIntro-fixedTimeKernel}}

We now establish the fixed-time determinantal structure and limiting kernel stated in Theorem~\ref{thmIntro-fixedTimeKernel}.
\begin{proof}[Proof of Theorem~\ref{thmIntro-fixedTimeKernel}]
Observe first from \eqref{logSingularValuesSDE} that the parameter
$\theta$ translates every logarithmic particle by the same
deterministic amount $\theta t/2$. Hence,
\begin{align*}
\mathcal{K}_t^{(\theta)}(u,v)
=
\mathcal{K}_t^{(0)}
\left(
u-\frac{\theta t}{2},
v-\frac{\theta t}{2}
\right).
\end{align*}
It is therefore enough to perform the computation for $\theta=0$,
which we do below; the general case follows from this deterministic
translation. In the remainder of this proof, we write
$\mathcal{K}_t=\mathcal{K}_t^{(0)}$.

Choose initial configurations
$\bm{x}^{(N)}=\big(x_i^{(N)}\big)_{i=1}^N\in
\mathbb{W}_{N,+}^\circ$
satisfying \eqref{IC-conv-exp}, and set
\begin{align*}
\alpha_i^{(N)}\defeq\log x_i^{(N)}, \quad  i\in\Ibr{N}.
\end{align*}

Fix $L>A>\log x_1$.
From \eqref{IC-conv-exp} we have
$N^{-1}x_1^{(N)}
\xrightarrow{N\to\infty}
x_1$. 
Thus, there exists $N_0\in\mathbb{N}$ such that
\begin{align*}
\alpha_1^{(N)}-\log N<A,
\qquad 
N\ge N_0.
\end{align*}
For $\theta=0$, the process $\mathfrak{y}^{(N)}$ is equal in
distribution to the non-colliding Brownian motion with drift vector
$\boldsymbol{\mu}$ given in \eqref{centeredDrifts}; see
\cite{Jones-OConnell,BBO}. Hence, by
Lemma~\ref{lem-driftedBmKernel}, for every $N\ge N_0$, a correlation
kernel of $\mathfrak{y}^{(N)}$ started from
$\boldsymbol{\alpha}^{(N)}
=\big(\alpha_i^{(N)}\big)_{i=1}^N$ 
can be written as, for $u,v\in\mathbb{R}$,
\begin{align}\label{finiteN-Kernel}
\mathcal{K}_t^N(u,v)
={}&
\frac{1}{(2\pi\mathrm{i})^2t}
\int_{\Gamma_N}dw
\int_{\Lambda_N}dz\,
\frac{1}{\mathrm{e}^{w-z}-1}
\exp\left(
\frac{(w-v)^2-(z-u)^2}{2t}
+
\frac{N-1}{2}(z-w)
\right)
\nonumber\\
&\quad \times
\prod_{j=1}^{N}
\frac{
\mathrm{e}^w-\mathrm{e}^{\alpha_j^{(N)}}
}{
\mathrm{e}^z-\mathrm{e}^{\alpha_j^{(N)}}
},
\end{align}
where
\begin{align}\label{Gamma_N}
\Gamma_N
=
\Gamma+\log N,
\qquad
\Gamma
=
L+\mathrm{i}\mathbb{R},
\end{align}
and the positively oriented contour $\Lambda_N$ is given by
\begin{align}\label{Lambda_N}
\Lambda_N
=
\partial
\big\{
z:
\log N-R_N\le\Re{(z)}\le A+\log N,
\ 
-\pi\le\Im{(z)}\le\pi
\big\},
\end{align}
with $R_N>0$ chosen so that
\begin{align*}
\log N-R_N
<
\alpha_N^{(N)}.
\end{align*}
Observe in particular that $\Gamma_N$ is an admissible choice of the
vertical contour in Lemma~\ref{lem-driftedBmKernel} and that, by contour
deformation, the $z$-contour therein can be replaced by $\Lambda_N$.
Indeed, for every $N\ge N_0$, the only poles of the $z$-integrand
enclosed by $\Lambda_N$ are
\begin{align*}
z
=
\alpha_j^{(N)},
\quad
j\in\Ibr{N},
\end{align*}
while $\Gamma_N$ lies strictly to the right of $\Lambda_N$. 
In the remainder of the proof, we restrict to $N\ge N_0$.

We conjugate the kernel in \eqref{finiteN-Kernel} as follows:
\begin{align*}
\widetilde{\mathcal{K}}_t^N(u,v)
\defeq
\mathrm{e}^{u^2/(2t)}
\mathcal{K}_t^N(u,v)
\mathrm{e}^{-v^2/(2t)}.
\end{align*}
Observe that this preserves all determinantal correlation functions.
Expanding the squares then yields
\begin{align*}
\widetilde{\mathcal{K}}_t^N(u,v)
={}&
\frac{1}{(2\pi\mathrm{i})^2t}
\int_{\Gamma_N}dw
\int_{\Lambda_N}dz\,
\frac{1}{\mathrm{e}^{w-z}-1}
\exp\left(
\frac{w^2-z^2}{2t}
+
\frac{zu-wv}{t}
+
\frac{N-1}{2}(z-w)
\right)
\\
&\quad \times
\prod_{j=1}^{N}
\frac{
\mathrm{e}^w-\mathrm{e}^{\alpha_j^{(N)}}
}{
\mathrm{e}^z-\mathrm{e}^{\alpha_j^{(N)}}
}.
\end{align*}

We now apply the scaling under which the process converges. At the
level of the correlation kernel, this amounts to shifting the spatial
variables according to
\begin{align*}
u
\mapsto
u+\frac{Nt}{2}+\log N,
\qquad
v
\mapsto
v+\frac{Nt}{2}+\log N.
\end{align*}
Consequently, we get
\begin{align*}
\widetilde{\mathcal{K}}_t^N
&\left(
u+\frac{Nt}{2}+\log N,
v+\frac{Nt}{2}+\log N
\right)
\\
&
={}
\frac{1}{(2\pi\mathrm{i})^2t}
\int_{\Gamma_N}dw
\int_{\Lambda_N}dz\,
\frac{1}{\mathrm{e}^{w-z}-1}
\nonumber\\
&\,\times\exp\left(
\frac{w^2-z^2}{2t}
+
\frac{zu-wv}{t}
+
\left(
\frac N2+\frac{\log N}{t}
\right)(z-w)
+
\frac{N-1}{2}(z-w)
\right)
\prod_{j=1}^{N}
\frac{
\mathrm{e}^w-\mathrm{e}^{\alpha_j^{(N)}}
}{
\mathrm{e}^z-\mathrm{e}^{\alpha_j^{(N)}}
}.
\end{align*}
We then translate the contour variables by setting
\begin{align*}
z_{\mathrm{old}}
=
z+\log N,
\qquad
w_{\mathrm{old}}
=
w+\log N.
\end{align*}
Finally, multiplying the resulting kernel by the conjugation factor (note that this does not change
the correlation functions)
\begin{align*}
\mathrm{e}^{\frac{\log N}{t}(v-u)}
\end{align*}
and using
\begin{align*}
\prod_{j=1}^{N}
\frac{
\mathrm{e}^{w+\log N}
-
\mathrm{e}^{\alpha_j^{(N)}}
}{
\mathrm{e}^{z+\log N}
-
\mathrm{e}^{\alpha_j^{(N)}}
}
=
\mathrm{e}^{N(w-z)}
\prod_{j=1}^{N}
\frac{
1-N^{-1}x_j^{(N)}\mathrm{e}^{-w}
}{
1-N^{-1}x_j^{(N)}\mathrm{e}^{-z}
},
\end{align*}
we obtain the following rescaled finite-$N$ kernel:
\begin{align}\label{rscK_N}
\widehat{\mathcal{K}}_t^N(u,v)
=
\frac{1}{(2\pi\mathrm{i})^2t}
\int_{\Gamma}dw
\int_{\Lambda_N}dz\,
F_N(z,w),
\end{align}
where
\begin{align*}
F_N(z,w)
=
\frac{1}{\mathrm{e}^{w-z}-1}
\exp\left(
\frac{w^2-z^2}{2t}
+
\frac{zu-wv}{t}
-
\frac12(z-w)
\right)
\frac{
\varphi_{\upsilon^{(N)}}(\mathrm{e}^{-w})
}{
\varphi_{\upsilon^{(N)}}(\mathrm{e}^{-z})
},
\end{align*}
and 
\begin{align*}
\upsilon^{(N)}
\defeq
\left(
\left(
N^{-1}x_i^{(N)}
\right)_{i\in\mathbb{N}},
N^{-1}\sum_{i=1}^{N}x_i^{(N)}
\right)
\in\Upsilon
.
\end{align*}
In particular, we have
\begin{align*}
\varphi_{\upsilon^{(N)}}(\zeta)
=
\prod_{j=1}^{N}
\left(
1-N^{-1}x_j^{(N)}\zeta
\right), \quad \zeta\in\mathbb{C}.
\end{align*}
Also, with a slight abuse of notation, we again denote the translated
$z$-contour by $\Lambda_N$, given now by
\begin{align}\label{Lambda_N_new-contour}
\Lambda_N
=
\partial
\big\{
z:
-R_N\le\Re{(z)}\le A,
\ 
-\pi\le\Im{(z)}\le\pi
\big\}.
\end{align}
We note that the right side $\Re{(z)}=A$ of $\Lambda_N$ is independent
of $N$, and the poles of the $z$-dependent product enclosed by
$\Lambda_N$ are
\begin{align*}
z
=
\alpha_j^{(N)}-\log N,
\quad
j\in\Ibr{N}.
\end{align*}
The poles arising from
$(\mathrm{e}^{w-z}-1)^{-1}$ remain outside $\Lambda_N$, since they
have real part $\Re{(w)}=L>A$.

Now, for $R>0$ sufficiently large, define the positively oriented
rectangular contour
\begin{align*}
\Lambda(R)
\defeq
\partial
\big\{
z:
-R\le\Re{(z)}\le A,
\ 
-\pi\le\Im{(z)}\le\pi
\big\}.
\end{align*}
Note that, for each fixed $N$, moving the left side of the contour
$\Lambda_N$ appearing in \eqref{rscK_N} from $\Re{(z)}=-R_N$ to
$\Re{(z)}=-R$, with $R\ge R_N$, crosses no poles. Consequently, by contour
deformation, $\Lambda_N$ in \eqref{rscK_N} may be replaced by
$\Lambda(R)$ for every $R\ge R_N$.

We denote the left side of $\Lambda(R)$, equipped with its induced
orientation, by
\begin{align*}
\Lambda^{\mathrm{left}}(R)
\defeq
\{
-R-\mathrm{i}\vartheta:
-\pi\le\vartheta\le\pi
\}.
\end{align*}
We claim that, for every fixed $N$,
\begin{align*}
\int_{\Gamma}dw
\int_{\Lambda^{\mathrm{left}}(R)}dz\,
F_N(z,w)
\longrightarrow
0
\qquad
\textnormal{ as }\, R\to\infty.
\end{align*}
Indeed, since
$\Re(w-z)=L+R>0$, for all sufficiently large $R$,
\begin{align*}
\left|
\frac{1}{\mathrm{e}^{w-z}-1}
\right|
\le
\frac{1}{\mathrm{e}^{L+R}-1}
\le
C,
\qquad
(w,z)
\in
\Gamma\times\Lambda^{\mathrm{left}}(R),
\end{align*}
where the constant $C>0$ is independent of $R$ and $\tau$ and may change from line
to line.

Next, for $w=L+\mathrm{i}\tau$,
\begin{align}\label{GammaContourBound}
\Re\left(
\frac{w^2}{2t}
-
\frac{wv}{t}
+
\frac12w
\right)
=
\frac{L^2}{2t}
-
\frac{\tau^2}{2t}
-
\frac{Lv}{t}
+
\frac{L}{2}
\le
-\frac{\tau^2}{2t}
+
C.
\end{align}
Similarly, for
$z=-R-\mathrm{i}\vartheta$ with $|\vartheta|\le\pi$,
\begin{align*}
\Re\left(
-\frac{z^2}{2t}
+
\frac{zu}{t}
-
\frac12z
\right)
=
-\frac{R^2}{2t}
+
\frac{\vartheta^2}{2t}
-
\frac{Ru}{t}
+
\frac{R}{2}
\le
-\frac{R^2}{2t}
+
CR.
\end{align*}

Now consider
\begin{align*}
\Phi_N(z,w)
\defeq
\frac{
\varphi_{\upsilon^{(N)}}(\mathrm{e}^{-w})
}{
\varphi_{\upsilon^{(N)}}(\mathrm{e}^{-z})
}
=
\prod_{j=1}^{N}
\frac{
1-N^{-1}x_j^{(N)}\mathrm{e}^{-w}
}{
1-N^{-1}x_j^{(N)}\mathrm{e}^{-z}
}.
\end{align*}
On $\Gamma$, we have
\begin{align*}
\left|
1-N^{-1}x_j^{(N)}\mathrm{e}^{-w}
\right|
\le
1+N^{-1}x_j^{(N)}\mathrm{e}^{-L}, \quad j\in\Ibr{N}.
\end{align*}
Thus, for some $N$-dependent constant $c_N>0$ we have
\begin{align*}
\prod_{j=1}^{N}
\left|
1-N^{-1}x_j^{(N)}\mathrm{e}^{-w}
\right|
\le
c_N.
\end{align*}

Next, take $R$ large enough so that
\begin{align*}
N^{-1}x_N^{(N)}\mathrm{e}^{R}
\ge
2.
\end{align*}
Then, on $\Lambda^{\mathrm{left}}(R)$ we have
\begin{align*}
\left|
1-N^{-1}x_j^{(N)}\mathrm{e}^{-z}
\right|
&=
\left|
1-
N^{-1}x_j^{(N)}
\mathrm{e}^{R}
\mathrm{e}^{\mathrm{i}\vartheta}
\right|
\ge
N^{-1}x_j^{(N)}\mathrm{e}^{R}
-
1
\ge
\frac12
N^{-1}x_j^{(N)}\mathrm{e}^{R},
\quad
j\in\Ibr{N}.
\end{align*}
Therefore, for a possibly larger $c_N$, it holds that 
\begin{align*}
\prod_{j=1}^{N}
\frac{1}{
\left|
1-N^{-1}x_j^{(N)}\mathrm{e}^{-z}
\right|
}
\le
c_N\mathrm{e}^{-NR}.
\end{align*}
Hence,
\begin{align*}
|\Phi_N(z,w)|
\le
c_N\mathrm{e}^{-NR},
\qquad
(w,z)
\in
\Gamma\times\Lambda^{\mathrm{left}}(R).
\end{align*}
Combining the preceding estimates, we obtain
\begin{align*}
|F_N(z,w)|
\le
c_N
\mathrm{e}^{-\tau^2/(2t)}
\mathrm{e}^{-R^2/(2t)+CR}
\mathrm{e}^{-NR},
\qquad
(w,z)
\in
\Gamma\times\Lambda^{\mathrm{left}}(R).
\end{align*}
Since
$\operatorname{length}(\Lambda^{\mathrm{left}}(R))=2\pi$, we get
\begin{align*}
\int_{\Gamma}|dw|
\int_{\Lambda^{\mathrm{left}}(R)}|dz|\,
|F_N(z,w)|
\le
c_N
\mathrm{e}^{-R^2/(2t)+CR-NR}
\int_{\mathbb{R}}
\mathrm{e}^{-\tau^2/(2t)}
\,d\tau
\xrightarrow{R\to\infty}
0.
\end{align*}
Thus, the contribution of the left vertical side vanishes.

Now consider the horizontal sides. For
$z=x\pm\mathrm{i}\pi$, we have
\begin{align*}
\left|
1-N^{-1}x_j^{(N)}\mathrm{e}^{-z}
\right|
=
1+
N^{-1}x_j^{(N)}\mathrm{e}^{-x}
\ge
1, \quad j\in\Ibr{N}.
\end{align*}
Together with the preceding estimates on the remaining factors, this
gives, after increasing $c_N$ if necessary,
\begin{align*}
|F_N(x\pm\mathrm{i}\pi,L+\mathrm{i}\tau)|
\le
c_N
\mathrm{e}^{-\tau^2/(2t)}
\mathrm{e}^{-x^2/(2t)+C|x|},
\qquad
x\le A.
\end{align*}
The bound is integrable on
$(-\infty,A]\times\mathbb{R}$. Hence,  the integrals over the two
horizontal segments converge absolutely as $R\to\infty$.

Consequently, \eqref{rscK_N} admits the representation
\begin{align}\label{K_N-halfStrip}
\widehat{\mathcal{K}}_t^N(u,v)
=
\frac{1}{(2\pi\mathrm{i})^2t}
\int_{\Gamma}dw
\int_{\Lambda}dz\,
F_N(z,w),
\end{align}
where $\Lambda$ is the  half-strip contour
\begin{align}\label{halfStripContour}
\Lambda
\defeq
\{
A+\mathrm{i}\vartheta:
-\pi\le\vartheta\le\pi
\}
\cup
\{
x+\mathrm{i}\pi:
x\le A
\}
\cup
\{
x-\mathrm{i}\pi:
x\le A
\},
\end{align}
with orientation inherited from the positively oriented rectangular
truncations. The contours are illustrated in
Figure~\ref{fig-contours}.

\begin{figure}[h]
\centering
\begin{tikzpicture}[scale=.5,>=stealth]

% Axes
\draw[->] (-7.5,0) -- (5.2,0) node[right] {\small$\Re{(z)},\Re{(w)}$};
\draw[->] (0,-2.6) -- (0,2.6) node[above] {\small$\Im$};

\node[left] at (-7,1.8) {\small$\pi$};
\node[left] at (-7,-1.8) {\small$-\pi$};

% Half-strip contour
\draw[green!50!black,thick,->] (1.1,-1.8) -- (1.1,0.2);
\draw[green!50!black,thick,->] (1.1,0.2) -- (1.1,1.8);
\draw[green!50!black,thick,->] (1.1,1.8) -- (-2.0,1.8);
\draw[green!50!black,thick] (-2.0,1.8) -- (-7,1.8);
\draw[green!50!black,thick,->] (-7,-1.8) -- (-2.0,-1.8);
\draw[green!50!black,thick,->] (-2.0,-1.8) -- (1.1,-1.8);

\node[green!50!black] at (0.5,0.7) {\small$\Lambda$};
\node[below] at (1.1,0) {\small$A$};

% Vertical w-contour
\draw[blue!60!black,thick,->] (3.2,-2.3) -- (3.2,2.3);
\node[blue!60!black,right] at (3.2,2.1) {\small$\Gamma$};
\node[below] at (3.2,0) {\small$L$};

% Finite-N poles
\foreach \x in {-3.7,-3.0,-2.45,-1.65,-0.35}
  \fill (\x,0) circle (1.8pt);

% Finite-N contour
\draw[red!90!black,dashed,thick,->] (-4.5,-1.8) -- (1.1,-1.8);
\draw[red!90!black,dashed,thick,->] (1.1,-1.8) -- (1.1,1.8);
\draw[red!90!black,dashed,thick,->] (1.1,1.8) -- (-4.5,1.8);
\draw[red!90!black,dashed,thick,->] (-4.5,1.8) -- (-4.5,-1.8);

\node[red!90!black] at (-1.8,-1.35) {\small$\Lambda_N$};

% Left wall
\draw[orange!80!black,dash dot,thick,->]
(-5.5,1.8) -- (-5.5,-1.8);
\node[orange!80!black,above] at (-5.5,1.8)
{\small$\Lambda^{\mathrm{left}}(R)$};

\end{tikzpicture}
\caption{The fixed $w$-contour $\Gamma$, the finite $z$-contour
$\Lambda_N$, the half-strip $z$-contour $\Lambda$, and representative
finite-$N$ poles in the fundamental strip. The orange dash-dotted line
denotes $\Lambda^{\mathrm{left}}(R)$, which is sent to $-\infty$.}
\label{fig-contours}
\end{figure}

We now take $N\to\infty$ in \eqref{K_N-halfStrip}.
Note first that from \eqref{IC-conv-exp}
 we have 
$\upsilon^{(N)}
\xrightarrow{N\to\infty}
\upsilon$
in $\Upsilon$.
Hence
\begin{align*}
\varphi_{\upsilon^{(N)}}
\xrightarrow{N\to\infty}
\varphi_{\upsilon}
\qquad
\textnormal{ in }\,\LP_+,
\end{align*}
and therefore, since
$\varphi_{\upsilon}(\mathrm{e}^{-z})\ne0$ for $z\in\Lambda$,
\begin{align*}
\Phi_N(z,w)
\xrightarrow{N\to\infty}
\Phi(z,w)
\defeq
\frac{
\varphi_{\upsilon}(\mathrm{e}^{-w})
}{
\varphi_{\upsilon}(\mathrm{e}^{-z})
},
\end{align*}
pointwise on $\Lambda\times\Gamma$.

We next establish an integrable bound independent of $N$. 

Note first that 
since $\Re(w-z)\ge L-A$ on $\Gamma\times\Lambda$, we have 
\begin{align*}
  \left|\frac{1}{\mathrm e^{w-z}-1}\right|\le C,  \qquad (w,z)\in\Gamma\times\Lambda. 
\end{align*}
Moreover, 
by virtue of \eqref{GammaContourBound} we can write 
\begin{align*}
\left|
\exp\left(
\frac{w^2}{2t}
-
\frac{wv}{t}
+
\frac12w
\right)
\right|
\le
C\mathrm{e}^{-\tau^2/(2t)}, \qquad w=L+\mathrm{i}\tau, 
\end{align*}
where the constant $C>0$ is independent of $N$.

Now, consider the horizontal parts of $\Lambda$, where
$z=x\pm\mathrm{i}\pi$ and $x\le A$. 
From \eqref{IC-conv-exp}, we have
\begin{align*}
\sup_{N\ge N_0}
N^{-1}
\sum_{j=1}^{N}
x_j^{(N)}
<
\infty.
\end{align*}
Then, since
\begin{align*}
\left|
1-N^{-1}x_j^{(N)}\mathrm{e}^{-z}
\right|
=
1+
N^{-1}x_j^{(N)}\mathrm{e}^{-x}
\ge
1, \quad j\in\Ibr{N},
\end{align*}
we get
\begin{align*}
|\Phi_N(z,w)|
&\le
\prod_{j=1}^{N}
\left(
1+
N^{-1}x_j^{(N)}\mathrm{e}^{-L}
\right)
\le
\exp\left(
\mathrm{e}^{-L}
N^{-1}
\sum_{j=1}^{N}
x_j^{(N)}
\right)
\le
C.
\end{align*}
Combining the above estimates, we obtain
\begin{align*}
|F_N(x\pm\mathrm{i}\pi,L+\mathrm{i}\tau)|
\le
C
\mathrm{e}^{-\tau^2/(2t)}
\mathrm{e}^{-x^2/(2t)+C|x|}.
\end{align*}

Now consider the vertical part, where
$z=A+\mathrm{i}\vartheta$ and $|\vartheta|\le\pi$. 
Since
$N^{-1}x_1^{(N)}
\longrightarrow
x_1$ 
and $\log x_1<A$, there exists $0<\delta<1$ such that
\begin{align*}
\max_{1\le j\le N}
N^{-1}x_j^{(N)}\mathrm{e}^{-A}
=
N^{-1}x_1^{(N)}\mathrm{e}^{-A}
\le
\delta.
\end{align*}
Therefore,
\begin{align*}
\left|
1-N^{-1}x_j^{(N)}\mathrm{e}^{-z}
\right|
\ge
1-
N^{-1}x_j^{(N)}\mathrm{e}^{-A}.
\end{align*}
Using the bound $-\log(1-r)\le Cr$ for $0\le r\le\delta$, we obtain
\begin{align*}
\prod_{j=1}^{N}
\frac{1}{
\left|
1-N^{-1}x_j^{(N)}\mathrm{e}^{-z}
\right|
}
&\le
\exp\left(
C
N^{-1}
\sum_{j=1}^{N}
x_j^{(N)}
\right)
\le 
C.
\end{align*}
The numerator is bounded in the same way, and therefore
\begin{align*}
|\Phi_N(z,w)|
\le
C.
\end{align*}
Since the $z$-dependent exponential factor is bounded on the compact
vertical part, the estimates established above give
\begin{align*}
|F_N(A+\mathrm{i}\vartheta,L+\mathrm{i}\tau)|
\le
C\mathrm{e}^{-\tau^2/(2t)},
\qquad
|\vartheta|\le\pi.
\end{align*}

The preceding bounds may be chosen uniformly when $(u,v)$ ranges over
a compact subset of $\mathbb{R}^2$. Thus, for every such compact
subset, $F_N$ is dominated on $\Lambda\times\Gamma$ by an integrable
function independent of $(u,v)$ and of all sufficiently large $N$.
The dominated convergence theorem therefore yields
\begin{align*}
\widehat{\mathcal{K}}_t^N(u,v)
\longrightarrow
\mathcal{K}_t(u,v),
\qquad
N\to\infty,
\end{align*}
locally uniformly on $\mathbb{R}^2$, where
\begin{align*}
\mathcal{K}_t(u,v)
=
\frac{1}{(2\pi\mathrm{i})^2t}
\int_{\Gamma}dw
\int_{\Lambda}dz\,
\frac{1}{\mathrm{e}^{w-z}-1}
\exp\left(
\frac{w^2-z^2}{2t}
+
\frac{zu-wv}{t}
-
\frac12(z-w)
\right)
\frac{
\varphi_{\upsilon}(\mathrm{e}^{-w})
}{
\varphi_{\upsilon}(\mathrm{e}^{-z})
}.
\end{align*}

Finally, observe that the map
\begin{align*}
(\bm{x},\gamma)
\longmapsto
\sum_{i:x_i>0}\delta_{\log x_i}
\end{align*}
from $\Upsilon$ to $\mathsf{Conf}(\mathbb{R})$ is continuous (in the
vague topology). Indeed, for every compact set $\mathscr{K}\subset\mathbb{R}$,
\begin{align*}
\#\{i:\log x_i\in \mathscr{K}\}
\le
\gamma\mathrm{e}^{-\inf \mathscr{K}},
\end{align*}
and hence, only finitely many coordinates contribute on $\mathscr{K}$, locally uniformly
in $(\bm{x},\gamma)$. Since the coordinates are decreasing, these
coordinates belong locally to a fixed finite set of indices. Thus, the preceding local bound, together with coordinatewise
convergence, implies vague convergence of the associated point
configurations. Consequently, Theorem~\ref{thm-pathSpaceConv} and the
continuous mapping theorem yield convergence in distribution of the
rescaled fixed-time point configurations in
\(\mathsf{Conf}(\mathbb{R})\), and hence convergence of their Laplace
functionals for any nonnegative continuous compactly supported test
functions. On
the other hand, the locally uniform convergence
$\widehat{\mathcal{K}}_t^N\longrightarrow\mathcal{K}_t$, together with
the preceding local bounds and Hadamard's inequality, justifies passage
to the limit in the Fredholm expansions of the Laplace functionals for
compactly supported nonnegative test functions; see
\cite[Theorem~2 and Equations~(1.28)--(1.30)]{Soshnikov2000}.
Comparing the two limits shows that the Laplace functional of $\Xi_t$
is the Fredholm determinant associated with $\mathcal{K}_t$. Hence
$\Xi_t$ is a determinantal point process with correlation kernel
$\mathcal{K}_t$.

The formula for general $\theta\in\mathbb{R}$ follows from the
deterministic translation at the beginning of the proof. This completes
the proof.
\end{proof}

\subsection{Multitime correlation kernel}

In this subsection, we derive the multitime analogues of the
fixed-time kernels obtained above and prove
Theorem~\ref{thmIntro-multitimeKernel}.
We first use the Eynard--Mehta
theorem \cite{EynardMehta1998,BorodinRains}, together with a residue calculation
analogous to that in \cite[Theorem~2.2]{Johansson2004}, to obtain a
double-contour representation of the extended correlation kernel for
non-colliding Brownian bridges with arithmetic terminal points. We
then let the terminal time tend to infinity. As in the fixed-time
argument, the bridge ensembles converge in finite-dimensional
distributions to non-colliding Brownian motions with equally spaced
drifts, and the corresponding contour kernels converge to an extended
correlation kernel of the limiting process.

Write
\begin{align*}
p_{s,t}(x,y)
\defeq
p_{t-s}(x,y)
=
\frac{1}{\sqrt{2\pi(t-s)}}
\exp\left(
-\frac{(y-x)^2}{2(t-s)}
\right),
\qquad
0\le s<t.
\end{align*}

\begin{prop}\label{prop-bridgeMultitimeKernel}
Let $N\in\mathbb{N}$, $T>0$, and $\beta>0$. Consider
$N$ non-colliding Brownian bridges on $[0,T]$, starting from
$\bm{a}=(a_i)_{i=1}^N\in\mathbb{W}_N^\circ$
and ending at
$\bm{b}=(b_i)_{i=1}^N\in\mathbb{W}_N^\circ$ defined by
\begin{align*}
b_i
=
\beta\left(\frac{N+1}{2}-i\right),
\qquad
i\in\Ibr{N}.
\end{align*}
Then, an extended
correlation kernel of the bridge ensemble is given, for $s,t\in(0,T)$ and $u,v\in\mathbb{R}$, by
\begin{align}
\label{bridgeMultitimeContourKernel}
\mathcal{K}^{N,T}(s,u;t,v)
\nonumber\\={}&
-\mathbf{1}_{\{s<t\}}p_{t-s}(u,v)
\nonumber\\
&+
\frac{\beta}{(2\pi\mathrm{i})^2T\sqrt{st}}
\exp\left(
\frac{v^2}{2(T-t)}
-
\frac{u^2}{2(T-s)}
\right)
\int_{\Gamma}dw
\int_{\Lambda}dz\,
\frac{1}{
\mathrm{e}^{\beta(w-z)/T}-1
}
\nonumber\\
&\;\times
\exp\left(
\frac{T-s}{2sT}
\left(
w-\frac{T}{T-s}u
\right)^2
-
\frac{T-t}{2tT}
\left(
z-\frac{T}{T-t}v
\right)^2
+
\frac{\beta(N-1)}{2T}(z-w)
\right)
\nonumber\\
&\;\times
\prod_{j=1}^{N}
\frac{
\mathrm{e}^{\beta w/T}-\mathrm{e}^{\beta a_j/T}
}{
\mathrm{e}^{\beta z/T}-\mathrm{e}^{\beta a_j/T}
}.
\end{align}
Here $\Gamma=L+\mathrm{i}\mathbb{R}$ is oriented upward, and
$\Lambda$ is a positively oriented simple closed contour enclosing
$a_1,\ldots,a_N$, but no other poles of the $z$-integrand, and lying
strictly to the left of $\Gamma$.
\end{prop}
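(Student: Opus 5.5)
We would derive the kernel from the Eynard--Mehta theorem for the bridge ensemble and then evaluate it by Johansson's residue calculation. By the Karlin--McGregor formula, the joint density of the bridge positions at times $0<t_1<\cdots<t_m<T$ is proportional to a product of determinants. These are $\det(p_{t_1}(a_i,x^{(1)}_j))$, then $\det(p_{t_{k+1}-t_k}(x^{(k)}_i,x^{(k+1)}_j))$ for consecutive times, and finally $\det(p_{T-t_m}(x^{(m)}_i,b_j))$. This is exactly the Eynard--Mehta setting. The extended kernel is therefore
\begin{align*}
\mathcal{K}^{N,T}(s,u;t,v)=-\mathbf{1}_{\{s<t\}}p_{t-s}(u,v)+\sum_{i,j=1}^N p_{T-s}(u,b_i)\,(G^{-1})_{ij}\,p_{t}(a_j,v),
\end{align*}
with Gram matrix $G_{ji}=p_T(a_j,b_i)$, up to the transposition convention. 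The first term is already the one in the statement. The work is to rewrite the finite sum as the double contour integral, with the same structure as the fixed-time kernel \eqref{JohKernel}. When $s=t$ the sum reduces to the fixed-time kernel.

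The key steps are as follows. First, write $p_T(a_j,b_i)=(2\pi T)^{-1/2}\mathrm{e}^{-a_j^2/(2T)}\mathrm{e}^{-b_i^2/(2T)}\mathrm{e}^{a_jb_i/T}$. With $b_i=\beta\mu_i$, the matrix $(\mathrm{e}^{a_jb_i/T})$ becomes, after factoring $\mathrm{e}^{\beta(N-1)a_j/(2T)}$, a Vandermonde matrix in the variables $q_j=\mathrm{e}^{-\beta a_j/T}$. Its inverse is explicit through Lagrange interpolation: $\sum_i (V^{-1})_{ij}\zeta^{i-1}=\prod_{k\ne j}(\zeta-q_k)/(q_j-q_k)$. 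Second, represent the sum over $i$ as a contour integral in $w$. Write $p_{T-s}(u,b_i)$ as a Gaussian integral over the vertical line $\Gamma$: completing the square gives $\exp\big(\tfrac{T-s}{2sT}(w-\tfrac{T}{T-s}u)^2\big)$ integrated against $\mathrm{e}^{-\beta w\mu_i/T}$, up to the prefactor. The geometric sum over $i$ then collapses against the Lagrange polynomial into $\prod_j(\mathrm{e}^{\beta w/T}-\mathrm{e}^{\beta a_j/T})$ divided by the factor $\mathrm{e}^{\beta(w-z)/T}-1$. Third, represent the sum over $j$ (the Lagrange nodes) as residues at $z=a_j$ of $\prod_k(\mathrm{e}^{\beta z/T}-\mathrm{e}^{\beta a_k/T})^{-1}$ times the analytic continuation of $p_t(a_j,v)$ in $a_j\to z$. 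The continuation $p_t(z,v)$, combined with the Gaussian factors $\mathrm{e}^{-a_j^2/(2T)}$ from $G$, gives $\exp\big(-\tfrac{T-t}{2tT}(z-\tfrac{T}{T-t}v)^2\big)$ together with the prefactors $\mathrm{e}^{v^2/(2(T-t))}$ and $(2\pi)^{-1}(st)^{-1/2}$. The leftover powers $\mathrm{e}^{\beta(N-1)(z-w)/(2T)}$ come from the centering of the $\mu_i$. Because $\Lambda$ encloses exactly $a_1,\dots,a_N$ and no other poles, the $z$-integral reproduces the sum over $j$.

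Finally, we would check the formula against the fixed-time case $s=t$. There it must coincide with \eqref{JohKernel}, up to the conjugation $\mathrm{e}^{(u^2-v^2)/(2(T-t))}$, which does not affect the correlation functions. This check fixes constants and orientations.

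The main obstacle is the second step. One must justify exchanging the finite geometric sum with the $\Gamma$-integral and make the identity $\sum_{i}\zeta^{i-1}\cdot(\ldots)=\frac{\prod(\cdot)}{\mathrm{e}^{\beta(w-z)/T}-1}$ exact. This requires that no spurious pole from $(\mathrm{e}^{\beta(w-z)/T}-1)^{-1}$ is enclosed, which holds because $\Gamma$ lies strictly to the right of $\Lambda$. The bookkeeping of the Gaussian prefactors must also match $\beta/(T\sqrt{st})$. We would follow Johansson's computation line by line, replacing his single time by the pair $(s,t)$, so that the $w$-variable carries the data at time $s$ and the $z$-variable the data at time $t$. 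As he notes, the parity restriction on $N$ plays no role in the residue argument.
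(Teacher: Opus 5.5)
Your proposal is correct and follows essentially the paper's route: the Eynard--Mehta theorem for the bridge ensemble, explicit inversion of the Vandermonde-type Gram matrix, a Gaussian integral over $\Gamma$ for $p_{T-s}(u,b_i)$, and Johansson's residue representation of the $j$-sum over $\Lambda$. Your Lagrange-interpolation inverse is exactly what the paper's Cramer's-rule and Vandermonde-ratio computation produces.

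Two bookkeeping corrections. First, the $i$-sum yields only the Lagrange basis polynomial $\prod_{k\ne j}\frac{\mathrm{e}^{\beta w/T}-\mathrm{e}^{\beta a_k/T}}{\mathrm{e}^{\beta a_j/T}-\mathrm{e}^{\beta a_k/T}}$, up to exponential factors; the factor $(\mathrm{e}^{\beta(w-z)/T}-1)^{-1}$ appears only when you restore the missing $k=j$ factor in the residue formula for the $j$-sum. Second, at $s=t$ the formula is the transpose of \eqref{JohKernel} (swap $u$ and $v$), not a conjugate of it.
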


We have not found this explicit double-contour representation of the
extended bridge kernel in the literature. We therefore include a proof
below, combining the Eynard--Mehta theorem with the residue calculation
underlying Johansson's fixed-time formula.

\begin{proof}
Define the matrix $\mathbf{M}=[M_{ij}]_{i,j=1}^N$ by
\begin{align*}
M_{ij}
\defeq
p_T(a_i,b_j),
\qquad
i,j\in\Ibr{N}.
\end{align*}
By the Eynard--Mehta theorem \cite{EynardMehta1998,BorodinRains}, an extended
correlation kernel of the process is given by
\begin{align}\label{bridgeEynardMehtaKernel}
\mathcal{K}^{N,T}(s,u;t,v)
={}&
-\mathbf{1}_{\{s<t\}}p_{t-s}(u,v)
+
\sum_{i,j=1}^{N}
p_{T-s}(u,b_i)
\big(\mathbf{M}^{-1}\big)_{ij}
p_t(a_j,v).
\end{align}

Set
\begin{align*}
\bm{q}^{\mathsf{T}}
=
(q_i)_{1\le i\le N},
\qquad
q_i
\defeq
p_{T-s}(u,b_i),
\qquad
i\in\Ibr{N}.
\end{align*}
Using Cramer's rule, we can write
\begin{align}\label{multitimeCramer}
\sum_{i=1}^{N}
q_i
\big(\mathbf{M}^{-1}\big)_{ij}
=
\frac{
\det\left(
\mathbf{M}^{(j\to\bm{q}^{\mathsf{T}})}
\right)
}{
\det(\mathbf{M})
},
\qquad
j\in\Ibr{N},
\end{align}
where $\mathbf{M}^{(j\to\bm{q}^{\mathsf{T}})}$ is obtained
from $\mathbf{M}$ by replacing its $j$-th row by
$\bm{q}^{\mathsf{T}}$.

For each fixed $j$, let
\begin{align*}
\widehat{a}_j(\xi)
\defeq
\frac{T}{T-s}u
+
\mathrm{i}\xi
\sqrt{\frac{Ts}{T-s}},
\qquad
\xi\in\mathbb{C}.
\end{align*}
Fix $L>a_1$ and set
$\Gamma
\defeq
L+\mathrm{i}\mathbb{R}$. 
Choose
\begin{align*}
\lambda
\defeq
\left(
\frac{T}{T-s}u-L
\right)
\sqrt{\frac{T-s}{Ts}},
\end{align*}
so that
\begin{align*}
\Re\widehat{a}_j(\xi)=L,
\qquad
\xi\in\mathbb{R}+\mathrm{i}\lambda.
\end{align*}
By the Fourier representation of the heat kernel, followed by a
contour shift from $\mathbb{R}$ to
$\mathbb{R}+\mathrm{i}\lambda$, we can write
\begin{align*}
p_{T-s}(u,b_i)
={}&
\sqrt{\frac{T}{2\pi(T-s)}}
\exp\left(
-\frac{u^2}{2(T-s)}
\right)
\nonumber\\
&\quad\times
\int_{\mathbb{R}+\mathrm{i}\lambda}
\exp\left(
-\frac{\xi^2}{2}
+
\frac{\widehat{a}_j(\xi)^2}{2T}
\right)
p_T\left(
\widehat{a}_j(\xi),b_i
\right)
\,d\xi.
\end{align*}
Taking $\widehat{a}_k(\xi)=a_k$ for $k\ne j$, we have
\begin{align*}
\det\left(
\mathbf{M}^{(j\to\bm{q}^{\mathsf{T}})}
\right)
={}&
\sqrt{\frac{T}{2\pi(T-s)}}
\exp\left(
-\frac{u^2}{2(T-s)}
\right)
\nonumber\\
&\quad\times
\int_{\mathbb{R}+\mathrm{i}\lambda}
\exp\left(
-\frac{\xi^2}{2}
+
\frac{\widehat{a}_j(\xi)^2}{2T}
\right)
\det\left(
p_T(\widehat{a}_k(\xi),b_i)
\right)_{k,i=1}^{N}
\,d\xi.
\end{align*}

For the arithmetic endpoints \eqref{multitimeEndpoints}, expansion of
the heat kernels and the Vandermonde determinant gives
\begin{align}\label{detRatio}
\frac{
\det\left(
\mathbf{M}^{(j\to\bm{q}^{\mathsf{T}})}
\right)
}{
\det(\mathbf{M})
}
={}&
\sqrt{\frac{T}{2\pi(T-s)}}
\exp\left(
-\frac{u^2}{2(T-s)}
\right)
\nonumber\\
&\quad\times
\int_{\mathbb{R}+\mathrm{i}\lambda}
\exp\left(
-\frac{\xi^2}{2}
+
\frac{a_j^2}{2T}
+
\frac{\beta(N-1)}{2T}
\left(
a_j-\widehat{a}_j(\xi)
\right)
\right)
\nonumber\\
&\quad\times
\prod_{k=1, k\ne j}^{N}
\frac{
\mathrm{e}^{\beta\widehat{a}_j(\xi)/T}
-
\mathrm{e}^{\beta a_k/T}
}{
\mathrm{e}^{\beta a_j/T}
-
\mathrm{e}^{\beta a_k/T}
}
\,d\xi.
\end{align}
Making the change of variables
$w=\widehat{a}_j(\xi)$ in \eqref{detRatio}, under which
$\mathbb{R}+\mathrm{i}\lambda$ is mapped onto $\Gamma$, we obtain
\begin{align*}
\sum_{i=1}^{N}
q_i
\big(\mathbf{M}^{-1}\big)_{ij}
={}&
\frac{1}{\mathrm{i}\sqrt{2\pi s}}
\exp\left(
-\frac{u^2}{2(T-s)}
+
\frac{a_j^2}{2T}
+
\frac{\beta(N-1)}{2T}a_j
\right)
\nonumber\\
&\quad\times
\int_{\Gamma}
\exp\left(
\frac{T-s}{2sT}
\left(
w-\frac{T}{T-s}u
\right)^2
-
\frac{\beta(N-1)}{2T}w
\right)
\nonumber\\
&\quad\times
\prod_{k=1, k\ne j}^{N}
\frac{
\mathrm{e}^{\beta w/T}-\mathrm{e}^{\beta a_k/T}
}{
\mathrm{e}^{\beta a_j/T}-\mathrm{e}^{\beta a_k/T}
}
\,dw.
\end{align*}

Multiplying by $p_t(a_j,v)$, summing over $j$, and completing the
square yields
\begin{align}\label{bridgeKernelDoubleSum}
&\sum_{i,j=1}^{N}
p_{T-s}(u,b_i)
\big(\mathbf{M}^{-1}\big)_{ij}
p_t(a_j,v)
\nonumber\\
&=
\frac{1}{2\pi\mathrm{i}\sqrt{st}}
\exp\left(
\frac{v^2}{2(T-t)}
-
\frac{u^2}{2(T-s)}
\right)
\int_{\Gamma}dw\,
\exp\left(
\frac{T-s}{2sT}
\left(
w-\frac{T}{T-s}u
\right)^2
-
\frac{\beta(N-1)}{2T}w
\right)
\nonumber\\
&\quad\times
\sum_{j=1}^{N}
\exp\left(
-\frac{T-t}{2tT}
\left(
a_j-\frac{T}{T-t}v
\right)^2
+
\frac{\beta(N-1)}{2T}a_j
\right)
\prod_{k=1, k\ne j}^{N}
\frac{
\mathrm{e}^{\beta w/T}-\mathrm{e}^{\beta a_k/T}
}{
\mathrm{e}^{\beta a_j/T}-\mathrm{e}^{\beta a_k/T}
}.
\end{align}

The sum over $j$ can be written as a residue integral:
\begin{align*}
\sum_{j=1}^{N}
\exp&\left(
-\frac{T-t}{2tT}
\left(
a_j-\frac{T}{T-t}v
\right)^2
+
\frac{\beta(N-1)}{2T}a_j
\right)
\prod_{k=1, k\ne j}^{N}
\frac{
\mathrm{e}^{\beta w/T}-\mathrm{e}^{\beta a_k/T}
}{
\mathrm{e}^{\beta a_j/T}-\mathrm{e}^{\beta a_k/T}
}
\nonumber\\
&=
\frac{\beta}{2\pi\mathrm{i}T}
\int_{\Lambda}dz\,
\exp\left(
-\frac{T-t}{2tT}
\left(
z-\frac{T}{T-t}v
\right)^2
+
\frac{\beta(N-1)}{2T}z
\right)
\nonumber\\
&\quad\times
\frac{1}{
\mathrm{e}^{\beta(w-z)/T}-1
}
\prod_{k=1}^{N}
\frac{
\mathrm{e}^{\beta w/T}-\mathrm{e}^{\beta a_k/T}
}{
\mathrm{e}^{\beta z/T}-\mathrm{e}^{\beta a_k/T}
},
\end{align*}
where $\Lambda$ is as in the statement. Substitution into
\eqref{bridgeKernelDoubleSum} gives
\eqref{bridgeMultitimeContourKernel}.
\end{proof}

\begin{lem}\label{lem-driftedBmMultitimeKernel}
Let $N\in\mathbb{N}$ and
$\bm{a}=(a_i)_{i=1}^N\in\mathbb{W}_N^\circ$. Consider
the non-colliding Brownian motions started from $\bm{a}$ with
drift vector
\begin{align*}
\bm{\mu}
=
(\mu_i)_{i=1}^N,
\qquad
\mu_i
=
\frac{N+1}{2}-i,
\qquad
i\in\Ibr{N}.
\end{align*}
Then the associated space-time point process is determinantal, with an
extended correlation kernel given, for $s,t>0$ and $u,v\in\mathbb R$, by
\begin{align}\label{driftedBmMultitimeKernel}
\mathcal{K}^N(s,u;t,v)
={}&
-\mathbf{1}_{\{s<t\}}p_{t-s}(u,v)
\nonumber\\
&+
\frac{1}{(2\pi\mathrm{i})^2\sqrt{st}}
\int_{\Gamma}dw
\int_{\Lambda}dz\,
\frac{1}{\mathrm{e}^{w-z}-1}
\nonumber\\
&\quad\times
\exp\left(
\frac{(w-u)^2}{2s}
-
\frac{(z-v)^2}{2t}
+
\frac{N-1}{2}(z-w)
\right)
\prod_{j=1}^{N}
\frac{
\mathrm{e}^w-\mathrm{e}^{a_j}
}{
\mathrm{e}^z-\mathrm{e}^{a_j}
}.
\end{align}
Here $\Gamma$ and $\Lambda$ are admissible contours as in
Proposition~\ref{prop-bridgeMultitimeKernel}.
\end{lem}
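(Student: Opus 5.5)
The plan is to run the fixed-time argument of Lemma~\ref{lem-driftedBmKernel} again, now using the bridge kernel of Proposition~\ref{prop-bridgeMultitimeKernel}.

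\emph{Step 1: convergence of finite-dimensional distributions.} Fix $0<t_1<\cdots<t_m$ and take $T>t_m$. Set $\beta=T$, so the terminal points are $\bm{b}^{(T)}=T\bm{\mu}$. By the Karlin--McGregor formula and the Markov property, the joint density of the bridge positions at times $t_1,\ldots,t_m$ is
\[
\prod_{k}\det\bigl(p_{t_k-t_{k-1}}(x^{(k-1)}_i,x^{(k)}_j)\bigr)
\]
multiplied by the ratio $\det(p_{T-t_m}(x^{(m)}_i,T\mu_j))/\det(p_T(a_i,T\mu_j))$. This ratio is the same one computed in the proof of Lemma~\ref{lem-driftedBmKernel}, with $t$ replaced by $t_m$. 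It therefore converges to $\mathrm{e}^{-t_m|\bm{\mu}|^2/2}h_{\bm{\mu}}(\bm{x}^{(m)})/h_{\bm{\mu}}(\bm{a})$. The limiting expression is exactly the multitime density of the non-colliding drifted Brownian motions, via the Doob $h$-transform with $h_{\bm\mu}$ \cite{BBO}. Since the limit is a probability density, Scheff\'e's lemma upgrades this to $L^1$ convergence of the joint densities on $(\mathbb{W}_N^\circ)^m$. Consequently the space-time correlation functions also converge in $L^1_{\mathrm{loc}}$.

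\emph{Step 2: convergence of the kernels.} Substitute $\beta=T$ into \eqref{bridgeMultitimeContourKernel}. The term $-\mathbf{1}_{\{s<t\}}p_{t-s}(u,v)$ does not depend on $T$. The prefactor $\beta/T$ equals $1$, and the factor $\exp\bigl(v^2/(2(T-t))-u^2/(2(T-s))\bigr)$ tends to $1$. Choose a fixed contour $\Lambda$ inside $\{|\Im z|<\pi\}$ enclosing $a_1,\dots,a_N$, and fix $\Gamma=L+\mathrm{i}\mathbb{R}$ with $L$ to the right of $\Lambda$. On these contours the integrand converges pointwise to that of \eqref{driftedBmMultitimeKernel}. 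For instance,
\[
\frac{T-s}{2sT}\Bigl(w-\frac{T}{T-s}u\Bigr)^2\to\frac{(w-u)^2}{2s},
\]
and the remaining factors converge termwise.

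For domination, write $w=L+\mathrm{i}\tau$. For large $T$ one has $\Re\frac{T-s}{2sT}(\cdots)^2\le C-c\tau^2$, uniformly for $u$ in compacts. Every other factor is bounded on $\Gamma\times\Lambda$: $\Lambda$ is compact and avoids the poles $a_j$; $|\mathrm{e}^{w-z}-1|^{-1}$ is bounded because $\Re(w-z)\ge L-\max_{\Lambda}\Re z>0$; and $|\mathrm{e}^{w}-\mathrm{e}^{a_j}|\le \mathrm{e}^L+\mathrm{e}^{a_j}$. Dominated convergence then gives convergence of the kernels, locally uniformly in $(u,v)$, for each pair $s,t\in\{t_1,\dots,t_m\}$. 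Hence the determinants $\det(\mathcal{K}^{N,T}(s_i,u_i;s_j,u_j))$ converge locally uniformly to $\det(\mathcal{K}^N(s_i,u_i;s_j,u_j))$.

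\emph{Step 3: identification.} The $L^1_{\mathrm{loc}}$ limit from Step 1 and the locally uniform limit from Step 2 must agree almost everywhere. Therefore $\mathcal{K}^N$ is an extended correlation kernel, and the space-time process is determinantal.

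The only genuinely delicate point is Step 1. One must check that the multitime density ratio depends on the endpoint data only through the last time $t_m$; the Markov property makes this immediate. One must also confirm that the limiting law is the non-colliding drifted process of \cite{BBO,Jones-OConnell}, which holds because $h_{\bm\mu}$ is space-time harmonic for killed Brownian motion with the factor $\mathrm{e}^{-t|\bm\mu|^2/2}$. The kernel bounds in Step 2 are routine copies of those in the proof of Lemma~\ref{lem-driftedBmKernel}.
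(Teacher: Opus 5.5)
Your proposal is correct and follows essentially the same route as the paper. The paper also uses Karlin--McGregor together with the Gaussian-determinant ratio, which depends only on $t_m$, and Scheff\'e's lemma to get $L^1$ convergence of the bridge multitime densities to those of the drifted non-colliding motions; it then sets $\beta=T$ in Proposition~\ref{prop-bridgeMultitimeKernel}, applies dominated convergence on a fixed contour inside $\{|\Im z|<\pi\}$, and identifies the two limits almost everywhere, whereas your explicit domination bounds spell out what the paper leaves as ``as in the proof of Lemma~\ref{lem-driftedBmKernel}''.
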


\begin{proof}
We first show, by a multitime extension of the fixed-time argument in
the proof of Lemma~\ref{lem-driftedBmKernel}, that the
finite-dimensional distributions of the non-colliding Brownian
bridges converge to those of the non-colliding Brownian motions with
drift $\bm{\mu}$.

Fix
\begin{align*}
0<t_1<\cdots<t_m<\infty,
\end{align*}
and, for $T>t_m$, consider the non-colliding Brownian bridges from
$\bm{a}$ to $T\bm{\mu}$ on $[0,T]$.
By repeated application of the Karlin--McGregor formula, their joint
density at times $t_1,\ldots,t_m$ is
\begin{align*}
&\det\left(
p_{t_1}(a_i,x_j^{(1)})
\right)_{i,j=1}^{N}
\prod_{r=1}^{m-1}
\det\left(
p_{t_{r+1}-t_r}
\bigl(x_i^{(r)},x_j^{(r+1)}\bigr)
\right)_{i,j=1}^{N}
\frac{
\det\left(
p_{T-t_m}(x_i^{(m)},T\mu_j)
\right)_{i,j=1}^{N}
}{
\det\left(
p_T(a_i,T\mu_j)
\right)_{i,j=1}^{N}
}.
\end{align*}
From the Gaussian determinant computation used in the proof of
Lemma~\ref{lem-driftedBmKernel}, we have 
\begin{align*}
\frac{
\det\left(
p_{T-t_m}(x_i^{(m)},T\mu_j)
\right)_{i,j=1}^{N}
}{
\det\left(
p_T(a_i,T\mu_j)
\right)_{i,j=1}^{N}
}
\longrightarrow
\exp\left(
-\frac{t_m}{2}\lvert\bm{\mu}\rvert^2
\right)
\frac{
h_{\bm{\mu}}(\bm{x}^{(m)})
}{
h_{\bm{\mu}}(\bm{a})
},\qquad \textnormal{ as }\, T\to\infty.
\end{align*}
 Consequently, the preceding joint density converges
pointwise to the multitime density of the non-colliding Brownian
motions with drift $\bm{\mu}$. Since both the approximating
and limiting functions are probability densities, Scheff\'e's lemma
yields convergence in $L^1$. In particular, the corresponding
multitime correlation functions converge in $L^1$.

Now set $\beta=T$ in
Proposition~\ref{prop-bridgeMultitimeKernel} and let $T\to\infty$.
The pointwise convergence and domination follow as in the proof of
Lemma~\ref{lem-driftedBmKernel}, uniformly when the spatial variables
range over compact sets. The extended bridge kernels therefore
converge locally uniformly to the kernel in
\eqref{driftedBmMultitimeKernel}. Comparing the resulting
determinantal correlation functions with the preceding $L^1$ limits
proves the claim.
\end{proof}

\subsubsection{Proof of Theorem~\ref{thmIntro-multitimeKernel}}

We are now ready to establish the multitime determinantal structure and extended kernel stated in Theorem~\ref{thmIntro-multitimeKernel}.

\begin{proof}[Proof of Theorem~\ref{thmIntro-multitimeKernel}]

The argument follows the same procedure as that of
Theorem~\ref{thmIntro-fixedTimeKernel}, now applied to the extended
finite-$N$ kernel in
Lemma~\ref{lem-driftedBmMultitimeKernel}.

As in the proof of the fixed-time kernel formula, the case
$\theta\ne0$ is again obtained by deterministic translation:
\begin{align*}
\mathcal{K}^{(\theta)}(s,u;t,v)
=
\mathcal{K}^{(0)}
\left(
s,u-\frac{\theta s}{2};
t,v-\frac{\theta t}{2}
\right).
\end{align*}
It is therefore enough to consider $\theta=0$, and we write
$\mathcal{K}\defeq\mathcal{K}^{(0)}$.

Choose initial configurations
$\bm{x}^{(N)}
=\big(x_i^{(N)}\big)_{i=1}^N
\in\mathbb{W}_{N,+}^\circ$
satisfying \eqref{IC-conv-exp}, and set
\begin{align*}
\alpha_i^{(N)}
\defeq
\log x_i^{(N)},
\qquad
i\in\Ibr{N}.
\end{align*}
Let $L>A>\log x_1$ and take
$N_0\in\mathbb{N}$ large enough so that
\begin{align*}
\alpha_1^{(N)}-\log N<A,
\qquad
N\ge N_0.
\end{align*}
Applying Lemma~\ref{lem-driftedBmMultitimeKernel}, for every
$N\ge N_0$, the extended correlation kernel of the process
$\mathfrak{y}^{(N)}$ with initial condition
$\bm{\alpha}^{(N)}$ can be written, for $s,t>0$ and
$u,v\in\mathbb{R}$, as
\begin{align*}
\mathcal{K}^N(s,u;t,v)
={}&
-\mathbf{1}_{\{s<t\}}p_{t-s}(u,v)
\nonumber\\
&+
\frac{1}{(2\pi\mathrm{i})^2\sqrt{st}}
\int_{\Gamma_N}dw
\int_{\Lambda_N}dz\,
\frac{1}{\mathrm{e}^{w-z}-1}
\nonumber\\
&\quad\times
\exp\left(
\frac{(w-u)^2}{2s}
-
\frac{(z-v)^2}{2t}
+
\frac{N-1}{2}(z-w)
\right)
\prod_{j=1}^{N}
\frac{
\mathrm{e}^w-\mathrm{e}^{\alpha_j^{(N)}}
}{
\mathrm{e}^z-\mathrm{e}^{\alpha_j^{(N)}}
},
\end{align*}
where $\Gamma_N$ and $\Lambda_N$ are the contours introduced in
\eqref{Gamma_N} and \eqref{Lambda_N}.

We consider the conjugated kernel:
\begin{align*}
\widetilde{\mathcal{K}}^N(s,u;t,v)
&\defeq
\exp\left(
-\frac{u^2}{2s}
+
\frac{v^2}{2t}
\right)
\mathcal{K}^N(s,u;t,v)
\nonumber\\
&=
-\mathbf{1}_{\{s<t\}}
\mathcal{P}_{s,t}(u,v)
\nonumber\\
&\quad+
\frac{1}{(2\pi\mathrm{i})^2\sqrt{st}}
\int_{\Gamma_N}dw
\int_{\Lambda_N}dz\,
\frac{1}{\mathrm{e}^{w-z}-1}
\nonumber\\
&\qquad\times
\exp\left(
\frac{w^2}{2s}
-
\frac{wu}{s}
-
\frac{z^2}{2t}
+
\frac{zv}{t}
+
\frac{N-1}{2}(z-w)
\right)
\prod_{j=1}^{N}
\frac{
\mathrm{e}^w-\mathrm{e}^{\alpha_j^{(N)}}
}{
\mathrm{e}^z-\mathrm{e}^{\alpha_j^{(N)}}
},
\end{align*}
where
\begin{align*}
\mathcal{P}_{s,t}(u,v)
&\defeq
\exp\left(
-\frac{u^2}{2s}
+
\frac{v^2}{2t}
\right)
p_{t-s}(u,v)
=
\frac{1}{\sqrt{2\pi(t-s)}}
\exp\left(
-\frac{(tu-sv)^2}{2st(t-s)}
\right),
\qquad
0<s<t.
\end{align*}

We then apply the scaling under which the process converges and perform a further conjugation by setting
\begin{align}\label{rescaledMultitimeKernel}
\widehat{\mathcal{K}}^N(s,u;t,v)
\defeq{}&
\exp\left(
\frac{(\log N)^2}{2s}
+\frac{u\log N}{s}
-\frac{(\log N)^2}{2t}
-\frac{v\log N}{t}
\right)
\nonumber\\
&\quad\times
\widetilde{\mathcal{K}}^N
\left(
s,u+\frac{Ns}{2}+\log N;
t,v+\frac{Nt}{2}+\log N
\right).
\end{align}
Observe that the exponential factors are conjugation factors and therefore do not change the determinantal space-time correlation functions. 
Note moreover that the transition term is invariant under the combined spatial shift and
conjugation above. Indeed, writing $\ell=\log N$, we have
\begin{align*}
t\left(u+\frac{Ns}{2}+\ell\right)
-s\left(v+\frac{Nt}{2}+\ell\right)
=
tu-sv+(t-s)\ell,
\end{align*}
and thus, the conjugation factor in
\eqref{rescaledMultitimeKernel} exactly cancels out the additional
$\ell$-dependent terms:
\begin{align*}
&\exp\left(
\frac{\ell^2}{2s}
+\frac{u\ell}{s}
-\frac{\ell^2}{2t}
-\frac{v\ell}{t}
\right)
\mathcal{P}_{s,t}
\left(
u+\frac{Ns}{2}+\ell,\,
v+\frac{Nt}{2}+\ell
\right)
=
\mathcal{P}_{s,t}(u,v).
\end{align*}

Next, making the changes of variables
\begin{align*}
w_{\mathrm{old}}
=
w+\log N,
\qquad
z_{\mathrm{old}}
=
z+\log N,
\end{align*}
and using
\begin{align*}
\prod_{j=1}^{N}
\frac{
\mathrm{e}^{w+\log N}
-
\mathrm{e}^{\alpha_j^{(N)}}
}{
\mathrm{e}^{z+\log N}
-
\mathrm{e}^{\alpha_j^{(N)}}
}
=
\mathrm{e}^{N(w-z)}
\frac{
\varphi_{\upsilon^{(N)}}(\mathrm{e}^{-w})
}{
\varphi_{\upsilon^{(N)}}(\mathrm{e}^{-z})
},
\end{align*}
we obtain
\begin{align*}
\widehat{\mathcal{K}}^N(s,u;t,v)
={}&
-\mathbf{1}_{\{s<t\}}\mathcal{P}_{s,t}(u,v)
\nonumber\\
&+
\frac{1}{(2\pi\mathrm{i})^2\sqrt{st}}
\int_{\Gamma}dw
\int_{\Lambda_N}dz\,
\frac{1}{\mathrm{e}^{w-z}-1}
\nonumber\\
&\quad\times
\exp\left(
\frac{w^2}{2s}
-
\frac{wu}{s}
-
\frac{z^2}{2t}
+
\frac{zv}{t}
-
\frac12(z-w)
\right)
\frac{
\varphi_{\upsilon^{(N)}}(\mathrm{e}^{-w})
}{
\varphi_{\upsilon^{(N)}}(\mathrm{e}^{-z})
},
\end{align*}
where the contour $\Lambda_N$ is the shifted contour given in
\eqref{Lambda_N_new-contour}.

Exactly as in the proof of
Theorem~\ref{thmIntro-fixedTimeKernel}, the finite $z$-contour
$\Lambda_N$ may be deformed to the half-strip contour $\Lambda$
given in \eqref{halfStripContour}: the contribution of its left
vertical side vanishes as that side is sent to $-\infty$, and the
integrals over the horizontal segments converge. Consequently, we get
\begin{align}\label{rescaledFiniteMultitimeKernel}
\widehat{\mathcal{K}}^N(s,u;t,v)
={}&
-\mathbf{1}_{\{s<t\}}\mathcal{P}_{s,t}(u,v)
\nonumber\\
&+
\frac{1}{(2\pi\mathrm{i})^2\sqrt{st}}
\int_{\Gamma}dw
\int_{\Lambda}dz\,
\frac{1}{\mathrm{e}^{w-z}-1}
\nonumber\\
&\quad\times
\exp\left(
\frac{w^2}{2s}
-
\frac{wu}{s}
-
\frac{z^2}{2t}
+
\frac{zv}{t}
-
\frac12(z-w)
\right)
\frac{
\varphi_{\upsilon^{(N)}}(\mathrm{e}^{-w})
}{
\varphi_{\upsilon^{(N)}}(\mathrm{e}^{-z})
}.
\end{align}

The bounds established in the proof of
Theorem~\ref{thmIntro-fixedTimeKernel} remain valid, with $s$ in the
$w$-dependent Gaussian factor and $t$ in the $z$-dependent Gaussian
factor. In particular, for $w=L+\mathrm{i}\tau$,
\begin{align*}
\left|
\exp\left(
\frac{w^2}{2s}
-\frac{wu}{s}
+\frac12w
\right)
\right|
\le
C\mathrm{e}^{-\tau^2/(2s)},
\end{align*}
and, on the horizontal parts $z=x\pm\mathrm{i}\pi$,
\begin{align*}
\left|
\exp\left(
-\frac{z^2}{2t}
+\frac{zv}{t}
-\frac12z
\right)
\right|
\le
C\mathrm{e}^{-x^2/(2t)+C|x|}.
\end{align*}
The preceding estimates may be chosen uniformly when $(u,v)$ ranges
over a compact subset of $\mathbb{R}^2$. Together with the product
estimates established in the fixed-time proof, they give an
integrable majorant independent of all sufficiently large $N$ and of
$(u,v)$ in that compact subset. 
By the dominated convergence theorem, the right-hand side of
\eqref{rescaledFiniteMultitimeKernel} converges to 
\begin{align}\label{limitingMultitimeKernel-standard}
\mathcal{K}(s,u;t,v)
={}&
-\mathbf{1}_{\{s<t\}}\mathcal{P}_{s,t}(u,v)
\nonumber\\
&+
\frac{1}{(2\pi\mathrm{i})^2\sqrt{st}}
\int_{\Gamma}dw
\int_{\Lambda}dz\,
\frac{1}{\mathrm{e}^{w-z}-1}
\nonumber\\
&\quad\times
\exp\left(
\frac{w^2}{2s}
-\frac{wu}{s}
-\frac{z^2}{2t}
+\frac{zv}{t}
-\frac12(z-w)
\right)
\frac{
\varphi_{\upsilon}(\mathrm{e}^{-w})
}{
\varphi_{\upsilon}(\mathrm{e}^{-z})
},
\end{align}
locally uniformly in $(u,v)$.

Finally, by the continuity argument used in the fixed-time proof,
Theorem~\ref{thm-pathSpaceConv} implies joint vague convergence of
the rescaled configurations at every finite collection of positive
times. Hence,  their space-time Laplace functionals converge to those
of the corresponding finite-dimensional distributions of the
limiting process. On the other hand, the locally uniform convergence
of the extended kernels, together with the preceding bounds and
Hadamard's inequality, permits termwise passage to the limit in their
Fredholm expansions. The limiting Laplace functionals are therefore
the Fredholm determinants associated with
$\mathcal{K}$. Thus the limiting space-time point process is
determinantal with extended correlation kernel
$\mathcal{K}$. 
The general case follows by deterministic translation. 
This completes the proof.
\end{proof}

\begin{rmk}
At equal times, the restriction of
\eqref{limitingMultitimeKernel-Intro} is the transpose of the kernel in
Theorem~\ref{thmIntro-fixedTimeKernel}. Since transposition does not
change determinantal correlation functions, it defines the same
fixed-time determinantal point process.
\end{rmk}

We now record some consequences of
Theorem~\ref{thmIntro-fixedTimeKernel} and
Theorem~\ref{thmIntro-multitimeKernel}. We begin with the continuity of the
correlation kernels with respect to the initial state.

\begin{cor}\label{cor-kernelInitialStateContinuity}
Let $\theta\in\mathbb{R}$, and suppose that
$\upsilon_n\xrightarrow{n\to\infty}\upsilon$ in $\Upsilon$. Then,
for every $s,t>0$,
\begin{align*}
\mathcal{K}_{\upsilon_n}^{(\theta)}(s,u;t,v)
\longrightarrow
\mathcal{K}_{\upsilon}^{(\theta)}(s,u;t,v),
\qquad \textnormal{ as }\,
n\to\infty,
\end{align*}
locally uniformly in $(u,v)\in\mathbb{R}^2$, where the additional
subscript indicates the initial state. In particular, for every
$t>0$,
\begin{align*}
\mathcal{K}_{t;\upsilon_n}^{(\theta)}(u,v)
\longrightarrow
\mathcal{K}_{t;\upsilon}^{(\theta)}(u,v),
\qquad \textnormal{ as }\,
n\to\infty,
\end{align*}
locally uniformly in $(u,v)\in\mathbb{R}^2$.
\end{cor}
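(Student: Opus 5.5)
The plan is to rerun the dominated convergence argument from the last step of the proof of Theorem~\ref{thmIntro-fixedTimeKernel}, with the sequence $\upsilon^{(N)}$ of finite-$N$ states replaced by an arbitrary sequence $\upsilon_n\to\upsilon$ in $\Upsilon$. The time-ordering term $-\mathbf{1}_{\{s<t\}}\mathcal{P}_{s,t}(u,v)$ and the translation $u\mapsto u-\theta s/2$, $v\mapsto v-\theta t/2$ do not depend on the initial state. It therefore suffices to treat $\theta=0$ and the double-contour part of \eqref{limitingMultitimeKernel-standard}. The fixed-time statement then follows by taking $s=t$ and using the preceding remark on transposition. (Equivalently, one applies the same argument directly to \eqref{limitingFixedTimeKernel-Intro}.)

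\textbf{Step 1: fix contours that work for all large $n$.} Write $\upsilon_n=(\bm{x}^{(n)},\gamma_n)$ and $\upsilon=(\bm{x},\gamma)$. Choose $\log x_1<A<L$. Coordinatewise convergence gives $x^{(n)}_1\to x_1$, so there exist $0<\delta<1$ and $n_0$ with $x_1^{(n)}\mathrm{e}^{-A}\le\delta$ for $n\ge n_0$. Hence the contours $\Gamma=L+\mathrm{i}\mathbb{R}$ and the half-strip $\Lambda$ are admissible simultaneously for $\upsilon$ and for every $\upsilon_n$ with $n\ge n_0$. Since $\upsilon\mapsto\varphi_\upsilon$ is a homeomorphism onto $\LP_+$, we have $\varphi_{\upsilon_n}\to\varphi_\upsilon$ locally uniformly on $\mathbb{C}$. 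In addition, $\varphi_\upsilon(\mathrm{e}^{-z})\neq0$ on $\Lambda$: on the horizontal sides $\mathrm{e}^{-z}<0$ while all zeros $1/x_i$ are positive, and on the vertical side $|\mathrm{e}^{-z}|=\mathrm{e}^{-A}<1/x_1$. It follows that the integrand converges pointwise on $\Gamma\times\Lambda$.

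\textbf{Step 2: a majorant uniform in $n$.} This is the main point, and I expect it to be the only real obstacle. In the fixed-time proof the ratio $\varphi_{\upsilon^{(N)}}(\mathrm{e}^{-w})/\varphi_{\upsilon^{(N)}}(\mathrm{e}^{-z})$ was bounded using the finite product together with $\sup_N N^{-1}\sum_j x_j^{(N)}<\infty$. Here the $\upsilon_n$ may carry a genuine exponential factor $\exp(-\mathfrak{d}(\upsilon_n)\zeta)$, so that bound must be redone for the general Hadamard product \eqref{varphiUpsilon}. The required uniform control is $\sup_n\gamma_n<\infty$, which holds because $\gamma_n\to\gamma$. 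Write $\zeta=\mathrm{e}^{-z}$ or $\zeta=\mathrm{e}^{-w}$, and estimate each piece as follows.
\begin{itemize}
\item \emph{Numerator on $\Gamma$.} Using $|1-x\zeta|\le\mathrm{e}^{x|\zeta|}$, we get $|\varphi_{\upsilon_n}(\mathrm{e}^{-w})|\le \mathrm{e}^{\mathfrak{d}(\upsilon_n)\mathrm{e}^{-L}}\prod_i(1+x_i^{(n)}\mathrm{e}^{-L})\le \mathrm{e}^{2\gamma_n\mathrm{e}^{-L}}$.
\item \emph{Denominator on the horizontal sides.} There $\zeta=-\mathrm{e}^{-x}$, so each factor satisfies $|1-x_i\zeta|\ge1$, and the exponential factor equals $\exp(-\mathfrak{d}\,\mathrm{e}^{-x})\cdot(-1)$ in sign, i.e.\ it has modulus $\mathrm{e}^{\mathfrak{d}\mathrm{e}^{-x}}\ge1$. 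Hence $|\varphi_{\upsilon_n}(\mathrm{e}^{-z})|\ge1$.
\item \emph{Denominator on the vertical side.} Here $|x_i\zeta|\le\delta$, and $-\log(1-r)\le Cr$ for $0\le r\le\delta$. This gives $1/|\varphi_{\upsilon_n}(\mathrm{e}^{-z})|\le \exp\big(\mathfrak{d}(\upsilon_n)\mathrm{e}^{-A}+C\mathrm{e}^{-A}\sum_i x_i^{(n)}\big)\le \mathrm{e}^{C'\gamma_n}$.
\end{itemize}
Combining these with the Gaussian bounds $C\mathrm{e}^{-\tau^2/(2s)}$ on $\Gamma$ and $C\mathrm{e}^{-x^2/(2t)+C|x|}$ on the horizontal sides, and with $|(\mathrm{e}^{w-z}-1)^{-1}|\le C$ since $\Re(w-z)\ge L-A$, yields an integrable majorant on $\Gamma\times\Lambda$. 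This majorant is independent of $n\ge n_0$ and of $(u,v)$ ranging over a compact set.

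\textbf{Step 3: conclude.} Dominated convergence gives convergence of the double integrals. The majorant is uniform in $(u,v)$ on compacts, and the integrand depends continuously on $(u,v)$ through the factor $\exp(-wu/s+zv/t)$. Hence the convergence is locally uniform, by the standard equicontinuity argument or by applying dominated convergence to $\sup_{(u,v)\in K}$ of the difference of integrands. Adding back the state-independent term $-\mathbf{1}_{\{s<t\}}\mathcal{P}_{s,t}$ and undoing the $\theta$-translation completes the argument.
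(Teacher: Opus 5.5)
Your proposal is correct and follows the paper's proof. The shared steps are: fix $A<L$ with $\sup_{n\ge n_0}x_1^{(n)}\mathrm{e}^{-A}<1$ and $\sup_n\gamma_n<\infty$; use $\varphi_{\upsilon_n}\to\varphi_\upsilon$ locally uniformly; reuse the Gaussian and product bounds uniformly in $n$; apply dominated convergence, noting that the transition term does not depend on the state.

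Your explicit bounds on the factor $\exp(-\mathfrak{d}(\upsilon_n)\zeta)$ are a useful addition, since the paper's finite-$N$ estimates never had to handle it and its proof only says the bounds hold uniformly in $n$. One wording slip: on the horizontal sides this factor is simply the positive real number $\exp(\mathfrak{d}(\upsilon_n)\,\mathrm{e}^{-x})\ge1$, so your remark about its sign is garbled, although your conclusion $|\varphi_{\upsilon_n}(\mathrm{e}^{-z})|\ge1$ there is correct.
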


\begin{proof}
Write
\begin{align*}
\upsilon_n
=
\left(
\bigl(
x_i^{(n)}
\bigr)_{i\in\mathbb{N}},
\gamma_n
\right),\quad n\in\mathbb{N}, 
\qquad
\upsilon
=
\left(
\left(
x_i
\right)_{i\in\mathbb{N}},
\gamma
\right).
\end{align*}
Choose $A<L$ and $\rho\in(0,1)$ such that
$x_1\mathrm{e}^{-A}<\rho$.
Since $x_1^{(n)}\longrightarrow x_1$, there exists
$n_0\in\mathbb{N}$ such that
\begin{align*}
\sup_{n\geq n_0}
x_1^{(n)}\mathrm{e}^{-A}
\leq
\rho
<
1,
\qquad
\sup_{n\geq n_0}\gamma_n<\infty.
\end{align*}
Thus, the contours $\Gamma$ and $\Lambda$ appearing in
\eqref{limitingFixedTimeKernel-Intro} and \eqref{limitingMultitimeKernel-Intro} 
may be chosen independently of all sufficiently large $n$.
Moreover,
\begin{align*}
\varphi_{\upsilon_n}
\longrightarrow
\varphi_{\upsilon},
\qquad\textnormal{ as }\,
n\to\infty,
\end{align*}
locally uniformly on $\mathbb{C}$, and the bounds in the proofs of
Theorems~\ref{thmIntro-fixedTimeKernel} and
\ref{thmIntro-multitimeKernel} hold uniformly in $n$ and locally
uniformly in $(u,v)$. The transition term
\begin{align*}
-\mathbf{1}_{\{s<t\}}\mathcal{P}_{s,t}(u,v)
\end{align*}
is also independent of the initial state. Hence,  both conclusions
follow by applying the dominated convergence theorem.
\end{proof}

The determinantal formulas moreover yield the joint Laplace
functionals at finitely many times and, as particular cases, the
fixed-time Laplace functionals and gap probabilities.

\begin{cor}\label{cor-multitimeFredholmFormula}
Let $\theta\in\mathbb{R}$ and $\upsilon\in\Upsilon$. 
Fix $m\in\mathbb{N}$ and $0<t_1<\cdots<t_m$. Let
$f_1,\ldots,f_m$ be nonnegative, continuous, compactly supported
functions on $\mathbb{R}$. On
$\{t_1,\ldots,t_m\}\times\mathbb{R}$, set
\begin{align*}
f(t_k,u)
\defeq
f_k(u),
\qquad
g(t_k,u)
\defeq
1-\mathrm{e}^{-f_k(u)},
\qquad
k\in\llbracket m\rrbracket.
\end{align*}
Then, we have
\begin{align*}
&
\mathbb{E}_{\upsilon}
\left[
\exp\left(
-\sum_{k=1}^{m}
\int_{\mathbb{R}}
f_k(u)\,\Xi_{t_k}(\mathrm{d}u)
\right)
\right]
=
\det\left(
I-\sqrt{g}\,
\mathcal{K}^{(\theta)}
\sqrt{g}
\right)_
{L^2(\{t_1,\ldots,t_m\}\times\mathbb{R})},
\end{align*}
where the $L^2$ space is taken with respect to counting measure in
the time variable and Lebesgue measure in the spatial variable.

In particular, for every $t>0$ and every nonnegative, continuous,
compactly supported function $f$ on $\mathbb{R}$, set
\begin{align*}
g
\defeq
1-\mathrm{e}^{-f}.
\end{align*}
Then
\begin{align*}
\mathbb{E}_{\upsilon}
\left[
\exp\left(
-\int_{\mathbb{R}}
f(u)\,\Xi_t(\mathrm{d}u)
\right)
\right]
=
\det\left(
I-\sqrt{g}\,
\mathcal{K}_{t}^{(\theta)}
\sqrt{g}
\right)_{L^2(\mathbb{R})}.
\end{align*}
Moreover, for every bounded Borel set $B\subset\mathbb{R}$, we have
\begin{align*}
\mathbb{P}_{\upsilon}
\left(
\Xi_t(B)=0
\right)
=
\det\left(
I-\mathcal{K}_{t}^{(\theta)}
\right)_{L^2(B)}.
\end{align*}
Here, the correlation kernels in the Fredholm determinants are
understood as the corresponding integral operators on the indicated
$L^2$ spaces. In the gap-probability formula,
$\mathcal{K}_{t}^{(\theta)}(u,v)$ is restricted to $B\times B$.
All Fredholm determinants are understood through their absolutely
convergent Fredholm expansions.
\end{cor}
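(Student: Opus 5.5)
The plan is to derive all three formulas from the general theory of determinantal point processes. The needed inputs are Theorem~\ref{thmIntro-multitimeKernel} (and its equal-time specialization, Theorem~\ref{thmIntro-fixedTimeKernel}), together with local bounds on the kernels that make the Fredholm expansions absolutely convergent. The multitime formula comes first; the fixed-time formula is its case $m=1$, after noting that the equal-time restriction of $\mathcal{K}^{(\theta)}$ is the transpose of $\mathcal{K}_t^{(\theta)}$, which leaves every Fredholm determinant unchanged. The gap probability is then obtained by a monotone limit.

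\textbf{Step 1: local boundedness.} I first check that $\mathcal{K}^{(\theta)}(s,u;t,v)$ is bounded on $\{t_1,\ldots,t_m\}^2\times \mathscr{K}^2$ for every compact $\mathscr{K}\subset\mathbb{R}$. For the double-contour term this follows from the majorant built in the proofs of Theorems~\ref{thmIntro-fixedTimeKernel} and \ref{thmIntro-multitimeKernel}. On $\Gamma\times\Lambda$ the integrand is bounded by
\begin{align*}
C\,\mathrm{e}^{-\tau^2/(2s)}\,\mathrm{e}^{-x^2/(2t)+C|x|},
\end{align*}
uniformly for $(u,v)$ in compacts. This uses $|\varphi_\upsilon(\mathrm{e}^{-w})|\le \exp(\gamma\mathrm{e}^{-L})$ and a lower bound for $|\varphi_\upsilon(\mathrm{e}^{-z})|$ on $\Lambda$. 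On the horizontal sides that lower bound is $\ge \mathrm{e}^{-\mathfrak{d}(\upsilon)\mathrm{e}^{-x}\cos(\pi)\cdots}$; more carefully, the exponential factor has modulus $\exp(\mathfrak{d}(\upsilon)\mathrm{e}^{-x})\ge1$ there, since $\mathrm{e}^{-z}=-\mathrm{e}^{-x}$. The transition term $\mathcal{P}_{s,t}$ is bounded on compacts because $s<t$ are fixed.

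\textbf{Step 2: Fredholm expansion for the multitime Laplace functional.} With $g=1-\mathrm{e}^{-f}$ supported in $\{t_k\}\times\mathscr{K}$ and $0\le g\le1$, I expand the expectation
\begin{align*}
\mathbb{E}_\upsilon\Big[\prod_{(t_k,y)\in\Xi}(1-g(t_k,y))\Big].
\end{align*}
The standard inclusion–exclusion identity writes this as
\begin{align*}
\sum_{n\ge0}\frac{(-1)^n}{n!}\int \rho_n\prod_i g,
\end{align*}
where $\rho_n=\det(\mathcal{K}^{(\theta)}(\cdot,\cdot))$ by Theorem~\ref{thmIntro-multitimeKernel}. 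Hadamard's inequality gives $|\rho_n|\le n^{n/2}M^n$ on the support, with $M$ from Step 1, so the series converges absolutely. This also justifies exchanging expectation and sum: the point counts in $\mathscr{K}$ have finite exponential moments because $\mathbb{E}\binom{\Xi(\mathscr{K})}{n}\le n^{n/2}(M|\mathscr{K}|m)^n/n!$. Finally, conjugating by $\sqrt{g}$ leaves each determinant in the series unchanged, which identifies the series with $\det(I-\sqrt g\,\mathcal{K}^{(\theta)}\sqrt g)$ in the sense of its Fredholm expansion. This is essentially \cite[Theorem~2]{Soshnikov2000}.

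\textbf{Step 3: gap probabilities.} For $B$ a bounded Borel set, the same expansion applied with $g=\mathbf{1}_B$ gives directly
\begin{align*}
\mathbb{P}(\Xi_t(B)=0)=\mathbb{E}\prod(1-\mathbf{1}_B).
\end{align*}
Alternatively, one can approximate by $f_n=n\mathbf 1_B$, or by continuous $f_n\uparrow\infty\cdot\mathbf 1_{B}$ when $B$ is open, and pass to the limit. The direct route is cleaner, since inclusion–exclusion only needs $0\le g\le1$ bounded and compactly supported, not continuity.

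The only genuinely delicate point is the uniform local bound on the kernel in Step 1, specifically the nonvanishing and lower bound of $\varphi_\upsilon(\mathrm{e}^{-z})$ along the unbounded horizontal parts of $\Lambda$. I plan to take this from the product estimates already carried out in the fixed-time proof, passing to the limit $N\to\infty$ in those bounds.
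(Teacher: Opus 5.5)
Your proposal is correct and follows essentially the same route as the paper. The multitime identity is the Fredholm (inclusion--exclusion) expansion of the Laplace functional of the determinantal process from Theorem~\ref{thmIntro-multitimeKernel}, made absolutely convergent by the local kernel bounds and Hadamard's inequality as in \cite[Theorem~2]{Soshnikov2000}; the fixed-time formula is the case $m=1$, and the gap probability is the void-probability formula. Your version simply writes out two points the paper leaves implicit: the local boundedness of the limiting kernel on $\Gamma\times\Lambda$, and the exchange of expectation and summation.
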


\begin{proof}
The first identity is the Fredholm expansion of the Laplace
functional of the determinantal point process on
$\{t_1,\ldots,t_m\}\times\mathbb{R}$. Absolute convergence follows
from the local bounds in the proof of
Theorem~\ref{thmIntro-multitimeKernel} and Hadamard's inequality; see
\cite[Theorem~2 and Equations~(1.28)--(1.30)]{Soshnikov2000}.

The fixed-time Laplace-functional identity is the special case
$m=1$. The gap-probability identity is the corresponding
void-probability formula for the fixed-time determinantal point
process from Theorem~\ref{thmIntro-fixedTimeKernel}. Its Fredholm
expansion is absolutely convergent by the local bounds established
in the proof of that theorem and Hadamard's inequality.
\end{proof}

Combining the preceding two corollaries shows that, at every finite
collection of positive times, the joint law of the corresponding
point configurations depends continuously on the full enhanced
initial state, where the configuration spaces are equipped with the
vague topology. In particular, the fixed-time gap probabilities are
continuous under convergence of the initial state.

We finally specialize the correlation kernels to the initial state
arising when, for every $N$, the finite-dimensional matrix process
starts from $\mathsf{Y}_N(0)=\mathbf{I}_N$.

\begin{cor}\label{cor-identityInitialKernel}
In the setting of Theorem~\ref{thmIntro-multitimeKernel}, suppose
that $\theta=0$ and
$\upsilon
=
\upsilon_{\mathrm{Id}}
\defeq
\bigl(
(0)_{i\in\mathbb{N}},
1
\bigr)$. 
Then,
\begin{align*}
\mathcal{K}_{\upsilon_{\mathrm{Id}}}(s,u;t,v)
={}&
-\mathbf{1}_{\{s<t\}}\mathcal{P}_{s,t}(u,v)
\\
&+
\frac{1}{(2\pi\mathrm{i})^2\sqrt{st}}
\int_{\Gamma}dw
\int_{\Lambda}dz\,
\frac{1}{\mathrm{e}^{w-z}-1}
\\
&\quad\times
\exp\left(
\frac{w^2}{2s}
-\frac{wu}{s}
-\frac{z^2}{2t}
+\frac{zv}{t}
-\frac12(z-w)
+\mathrm{e}^{-z}
-\mathrm{e}^{-w}
\right),
\end{align*}
where $\Gamma$ and $\Lambda$ are the contours from
Theorem~\ref{thmIntro-fixedTimeKernel}, with arbitrary finite
$A<L$.

In particular, for every $t>0$, the fixed-time point process has
correlation kernel
\begin{align*}
	\mathcal{K}_{\upsilon_{\mathrm{Id}}}(t,u;t,v)
	=
	\frac{1}{(2\pi\mathrm{i})^2t}
	\int_{\Gamma}dw
	\int_{\Lambda}dz\,
	\frac{
	\exp\left(
	\frac{w^2-z^2}{2t}
	+\frac{zv-wu}{t}
	-\frac12(z-w)
	+\mathrm{e}^{-z}
	-\mathrm{e}^{-w}
	\right)
	}{
	\mathrm{e}^{w-z}-1
	}.
\end{align*}
Equivalently, the fixed-time kernel is given by \eqref{limitingFixedTimeKernel-Intro} in
Theorem~\ref{thmIntro-fixedTimeKernel}, specialized to
$\upsilon_{\mathrm{Id}}$, which satisfies
\begin{align*}
	\mathcal{K}_{t,\upsilon_{\mathrm{Id}}}(u,v)
	=
	\mathcal{K}_{\upsilon_{\mathrm{Id}}}(t,v;t,u).
\end{align*}
\end{cor}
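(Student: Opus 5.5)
This is a direct specialization of Theorem~\ref{thmIntro-multitimeKernel} (with $\theta=0$) to the state $\upsilon_{\mathrm{Id}}=((0)_{i\in\mathbb N},1)$, so the plan is to evaluate the Laguerre--P\'olya function and then justify the contour and transposition claims.

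\textbf{Step 1: evaluate $\varphi_{\upsilon_{\mathrm{Id}}}$.} All $x_i=0$ and $\gamma=1$, so $\mathfrak d(\upsilon_{\mathrm{Id}})=1$. By \eqref{varphiUpsilon}, $\varphi_{\upsilon_{\mathrm{Id}}}(\zeta)=\mathrm e^{-\zeta}$, since every product factor equals $1$. Hence
\begin{align*}
\frac{\varphi_{\upsilon_{\mathrm{Id}}}(\mathrm e^{-w})}{\varphi_{\upsilon_{\mathrm{Id}}}(\mathrm e^{-z})}=\exp\bigl(\mathrm e^{-z}-\mathrm e^{-w}\bigr).
\end{align*}
Substituting this into \eqref{limitingMultitimeKernel-Intro} at $\theta=0$ gives the displayed multitime formula at once.

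\textbf{Step 2: the constraint on $A$.} Since $x_1=0$, the condition $\log x_1<A<L$ becomes just $A<L$ under the convention $\log0=-\infty$, as already remarked after Theorem~\ref{thmIntro-fixedTimeKernel}. The denominator $\varphi_{\upsilon_{\mathrm{Id}}}(\mathrm e^{-z})=\exp(-\mathrm e^{-z})$ never vanishes, so there are no poles in $z$ other than those of $(\mathrm e^{w-z}-1)^{-1}$. These lie on $\Re z=\Re w=L>A$, outside $\Lambda$.

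One point needs a little care: the domination used in the proof of Theorem~\ref{thmIntro-multitimeKernel} must hold for arbitrary $A$. On the horizontal sides $z=x\pm\mathrm i\pi$, we have $|\exp(\mathrm e^{-z})|=\exp(-\mathrm e^{-x})\le1$. On the vertical side and on $\Gamma$, the factors are bounded. So the integrand is integrable for any finite $A<L$. The value does not depend on $A$, because moving the right side of $\Lambda$ between two such values crosses no poles; the horizontal contributions cancel by analyticity.

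\textbf{Step 3: the fixed-time statement.} Put $s=t$. The indicator term vanishes, and we get the displayed formula for $\mathcal K_{\upsilon_{\mathrm{Id}}}(t,u;t,v)$. Now compare with \eqref{limitingFixedTimeKernel-Intro} at $\theta=0$ for $\upsilon_{\mathrm{Id}}$. Its exponent contains $zu-wv$ where the multitime kernel at equal times contains $zv-wu$, so $\mathcal K_{t,\upsilon_{\mathrm{Id}}}(u,v)=\mathcal K_{\upsilon_{\mathrm{Id}}}(t,v;t,u)$. This is the transposition noted in the preceding remark. Transposition leaves every determinant $\det(\mathcal K(u_i,u_j))$ unchanged, so the transposed kernel is also a correlation kernel for the fixed-time point process.

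I expect no real obstacle. The only step requiring thought is confirming in Step 2 that the majorants remain valid for arbitrary $A$. This holds because the ratio of Laguerre--P\'olya functions is bounded by $\exp(\mathrm e^{-L})$ on the whole contour pair.
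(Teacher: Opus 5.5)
Your proposal is correct and matches the paper's proof, which likewise substitutes $\varphi_{\upsilon_{\mathrm{Id}}}(\zeta)=\mathrm{e}^{-\zeta}$ into Theorem~\ref{thmIntro-multitimeKernel}, sets $s=t$, and observes that the result is the transpose of \eqref{limitingFixedTimeKernel-Intro}. Your Step 2 is not needed, since the theorem already allows any finite $A<L$ when $x_1=0$; it also contains a harmless slip: on the vertical side of $\Lambda$ the ratio $\exp(\mathrm{e}^{-z}-\mathrm{e}^{-w})$ is bounded by $\exp(\mathrm{e}^{-A}+\mathrm{e}^{-L})$, not by $\exp(\mathrm{e}^{-L})$.
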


\begin{proof}
The extended kernel follows immediately from
Theorem~\ref{thmIntro-multitimeKernel} and the identity
$\varphi_{\upsilon_{\mathrm{Id}}}(\zeta)
=
\mathrm{e}^{-\zeta}, \;\zeta\in\mathbb{C}$.
The displayed fixed-time kernel follows by setting $s=t$.
Its transpose is exactly the fixed-time correlation kernel
\eqref{limitingFixedTimeKernel-Intro} from
Theorem~\ref{thmIntro-fixedTimeKernel}, specialized to
$\upsilon_{\mathrm{Id}}$.
\end{proof}

 These formulas provide
alternative contour-integral representations of the extended and fixed-time
correlation kernels for the identity-started limiting process studied
in \cite{Ahn}.

\section{Gibbs resampling and the ISDE from general initial conditions}

In this section, we first establish structural properties of the limiting
dynamics concerning positivity and strict ordering of the particles. 
These
properties are then used to prove the Gibbs resampling statement in
Corollary~\ref{corIntro-GibbsProperty}. We assume familiarity with the
basic terminology of Gibbsian line ensembles and refer the reader to
\cite{CorwinHammond} for background. 
Finally, we derive the ISDE
representations stated in Theorem~\ref{thmIntro-ISDE} and
Corollary~\ref{corIntro-logISDE}.

In the following, we denote the equality in distribution by $\,\distreq$.

\subsection{Positivity and strict ordering}

\begin{prop}\label{prop-positiveCoordinates}
Let $\theta\in\mathbb{R}$ and
$\upsilon=(\bm{x},\gamma)\in\Upsilon$.
Consider the process $\mathsf{X}(\sbullet)$ from
Theorem~\ref{thm-pathSpaceConv}, started from $\upsilon$. Then the
following statements hold.
\begin{enumerate}
\item[(i)]
If $\mathfrak{d}(\upsilon)=0$ and
$\mathfrak{r}(\upsilon)=r<\infty$, then for any fixed $t\ge0$, we have
\begin{align}\label{finiteRankPreservation}
\mathbb{P}_{\upsilon}
\left(
\mathsf{x}_i(t)>0
\textnormal{ for }i\in\llbracket r\rrbracket,\  \
\mathsf{x}_i(t)=0
\textnormal{ for }i>r
\right)
=
1.
\end{align}
Moreover, if $r\ge1$, then, for every fixed $t>0$,
\begin{align}\label{rParticlesDistinct}
\mathbb{P}_{\upsilon}
\left(
\mathsf{x}_1(t)>
\mathsf{x}_2(t)>
\cdots>
\mathsf{x}_r(t)>0
\right)
=
1.
\end{align}

\item[(ii)]
If either $\mathfrak{d}(\upsilon)>0$ or
$\mathfrak{r}(\upsilon)=\infty$, then, for every fixed $t>0$,
\begin{align}\label{infiniteRankPositive}
\mathbb{P}_{\upsilon}
\left(
\mathsf{x}_i(t)>0
\textnormal{ for every }i\in\mathbb{N}
\right)
=
1,
\end{align}
and
\begin{align}\label{infiniteParticlesDistinct}
\mathbb{P}_{\upsilon}
\left(
\mathsf{x}_1(t)>
\mathsf{x}_2(t)>
\cdots>0
\right)
=
1.
\end{align}
\end{enumerate}
\end{prop}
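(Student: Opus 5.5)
The plan splits into three parts: rank preservation at finite $N$, simplicity of the fixed-time determinantal process (which gives strict ordering among the positive coordinates), and a counting argument showing that no active coordinate vanishes at a fixed positive time.

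\emph{Rank preservation, giving the zero part of \eqref{finiteRankPreservation}.} Solving \eqref{GL_NBmSDE} gives $\mathsf{Y}_N(t)=\mathsf{Y}_N(0)\mathsf{G}_N(t)$, where $\mathsf{G}_N$ is the solution started from $\mathbf{I}_N$ and is invertible. Hence the rank of $\mathsf{Y}_N(t)$ is constant in $t$.

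In case (i), write $\upsilon=(\bm{x},\gamma)$ with $x_1\ge\cdots\ge x_r>0=x_{r+1}=\cdots$ and $\gamma=\sum_{i\le r}x_i$. For $N\ge r$, take initial matrices $\mathsf{Y}_N(0)$ of rank $r$ whose nonzero squared singular values equal $Nx_1,\ldots,Nx_r$. Then \eqref{IC-conv-exp} holds exactly, and $\mathsf{x}^{(N)}_i(t)=0$ for all $i>r$ and all $t$. The set $\{\mathsf{x}_i=0\ \forall i>r\}$ is closed in $\Upsilon$, so Theorem~\ref{thm-pathSpaceConv} and the portmanteau theorem give $\mathsf{x}_i(t)=0$ for $i>r$, almost surely, for every $t$.

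\emph{Strict ordering of positive coordinates.} Fix $t>0$. By Theorem~\ref{thmIntro-fixedTimeKernel}, $\Xi_t$ is determinantal, and its kernel $\mathcal{K}_t^{(\theta)}$ is continuous. For a point process with correlation functions, the expected number of ordered pairs of distinct atoms in $B\times B$ is $\int_{B^2}\rho_2$, and this integral also counts multiplicities. Since $\rho_2(u,u)=\det=0$ on the diagonal, which is Lebesgue-null in $\mathbb{R}^2$, $\Xi_t$ is almost surely simple. Therefore any two positive coordinates $\mathsf{x}_i(t)=\mathsf{x}_{i+1}(t)>0$ occur with probability zero. Combined with the positivity shown below, this yields \eqref{rParticlesDistinct} and \eqref{infiniteParticlesDistinct}.

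\emph{Positivity of the active coordinates (the main obstacle).} Since $\mathsf{x}_k(t)>0$ exactly when $\Xi_t(\mathbb{R})\ge k$, it suffices to control the total number of points.

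In case (i), $\varphi_\upsilon$ is a polynomial of degree $r$ with roots $x_j^{-1}$. I would first deform $\Lambda$ in \eqref{limitingFixedTimeKernel-Intro}: because $\varphi_\upsilon(\mathrm{e}^{-z})^{-1}$ decays like $\mathrm{e}^{rx}$ as $\Re z=x\to-\infty$, the half-strip can be collapsed onto small circles around the poles $z=\log x_j$. This expresses $\mathcal{K}_t$ as a finite sum of residues, so $\mathcal{K}_t$ has rank at most $r$. I expect the integral $\int_{\mathbb{R}}\mathcal{K}_t(u,u)\,du$ to be computable: the $u$-integral is Gaussian and produces a delta function forcing $w=z$, and the residue of $(\mathrm{e}^{w-z}-1)^{-1}$ should then return the number of poles counted with multiplicity, namely $r$. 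Thus $\mathbb{E}\,\Xi_t(\mathbb{R})=r$. Together with $\Xi_t(\mathbb{R})\le r$, which follows from the rank preservation step, this forces $\Xi_t(\mathbb{R})=r$ almost surely, giving positivity in \eqref{finiteRankPreservation} for $t>0$; for $t=0$ it holds by assumption.

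In case (ii), fix $k$. I would approximate $\upsilon$ by states $\upsilon_n$ that have $\mathfrak{d}(\upsilon_n)=0$, finite rank $r_n\ge k$, and satisfy $\upsilon_n\to\upsilon$ in $\Upsilon$. For instance, if $\mathfrak{d}(\upsilon)>0$, replace the defect by $n$ extra coordinates each equal to $\mathfrak{d}(\upsilon)/n$; if $\mathfrak{r}(\upsilon)=\infty$, truncate. Then $\mathbb{P}_{\upsilon_n}(\Xi_t(\mathbb{R})\ge k)=1$ by case (i). To transfer this to $\upsilon$, I would show that $\mathbb{P}_{\upsilon}(\Xi_t((-M,\infty))<k)$ tends to zero as $M\to\infty$. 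These probabilities are finite combinations of Fredholm-type expressions from Corollary~\ref{cor-multitimeFredholmFormula}, and by Corollary~\ref{cor-kernelInitialStateContinuity} they are continuous in the initial state. The delicate point is uniformity in $n$ of the tail bound, i.e.\ ruling out that the $k$-th point of $\Xi_t$ under $\mathbb{P}_{\upsilon_n}$ escapes to $-\infty$. I would try to obtain this through a lower bound on $\int_{-M}^\infty\mathcal{K}_{t;\upsilon_n}(u,u)\,du$ that is uniform in $n$, combined with a variance bound $\operatorname{Var}\le\mathbb{E}$ valid for determinantal processes with Hermitian-type kernels. Since $\mathcal{K}_t$ here is not Hermitian, this bound may instead have to be replaced by a direct estimate on the relevant Fredholm expansion.
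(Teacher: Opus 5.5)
Your rank-preservation step and your simplicity argument for the strict ordering are essentially the paper's. The genuine gap is positivity in case (ii). Knowing $\mathbb{P}_{\upsilon_n}(\mathsf{x}_k(t)>0)=1$ for approximating states $\upsilon_n\to\upsilon$ gives nothing by itself. The event $\{\mathsf{x}_k(t)=0\}$ is closed, so weak convergence only yields $\limsup_n\mathbb{P}_{\upsilon_n}(\mathsf{x}_k(t)=0)\le\mathbb{P}_{\upsilon}(\mathsf{x}_k(t)=0)$, which is the useless direction: the $k$-th log-particle may escape to $-\infty$ in the limit. You flag this, but the uniform tail estimate is never supplied. The tool you name is also unavailable: $\mathcal{K}^{(\theta)}_t$ is not Hermitian, so neither $\operatorname{Var}\le\mathbb{E}$ nor a sum-of-Bernoullis description of counts is at hand, and you give no uniform-in-$n$ control of $\int_{-M}^{\infty}\mathcal{K}_{t;\upsilon_n}(u,u)\,du$.

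The missing idea is a monotone coupling at the matrix level. Since $\mathsf{Y}_N(t)=\mathrm{e}^{\theta t}\mathbf{Y}_0\mathsf{G}_N(t)$ with a common flow $\mathsf{G}_N$, a Loewner inequality $\mathbf{Y}_0^{\dagger}\mathbf{Y}_0\ge(\mathbf{Y}_0')^{\dagger}\mathbf{Y}_0'$ propagates pathwise to $\mathsf{Y}_N(t)^{\dagger}\mathsf{Y}_N(t)\ge\mathsf{Y}_N'(t)^{\dagger}\mathsf{Y}_N'(t)$. By monotonicity of ordered eigenvalues it then holds for every ordered squared singular value. Dividing by $N$ and applying Theorem~\ref{thm-pathSpaceConv} gives $\mathsf{x}'_k(t)\le_{\mathrm{st}}\mathsf{x}_k(t)$, which transfers positivity in the right direction.

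\begin{itemize}
\item For $\mathfrak{d}(\upsilon)=0$ and $\mathfrak{r}(\upsilon)=\infty$, compare $(Nx_1,\ldots,Nx_N)$ with its truncation $(Nx_1,\ldots,Nx_k,0,\ldots,0)$. Its limit $\upsilon^{[k]}$ is covered by case (i).
\item For $\mathfrak{d}(\upsilon)=d>0$, no finite-rank comparison can work when $x_k=0$. Approximants of $\upsilon$ then have $N^{-1}x_k^{(N)}\to0$, so they cannot dominate approximants of any state with positive $k$-th coordinate. Your device of spreading the defect over $n$ small coordinates admits no such ordering either. The paper instead uses the approximants $(d+Nx_i)_{i\le N}$, which dominate $d\,(1,\ldots,1)$. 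This gives $d\,\mathsf{x}_{k;\mathrm{Id}}(t)\le_{\mathrm{st}}\mathsf{x}_k(t)$, and positivity of the identity-started limit is imported from \cite{Ahn}.
\end{itemize}

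For case (i), your trace argument is a valid alternative in principle, but much heavier, and the computation as sketched is not right. On $\Gamma\times\Lambda$ the $u$-dependence of $\mathcal{K}_t(u,u)$ is $\mathrm{e}^{(z-w)u/t}$ with $\Re(w-z)\ge L-A>0$. So $\int_{\mathbb{R}}du$ diverges at $-\infty$, and there is no delta function. It can be repaired as follows.
\begin{itemize}
\item For finite rank you may move $\Gamma$ to the left of the small $z$-circles. The residues crossed at $w=z+2\pi\mathrm{i}k$ are entire in $z$ and integrate to zero.
\item Split the $u$-integral at $0$ and use the right $w$-contour for $u>0$ and the left one for $u<0$.
\item What remains is the double pole at $w=z$. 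Its residue is $z/t-\mathrm{e}^{-z}\varphi_{\upsilon}'(\mathrm{e}^{-z})/\varphi_{\upsilon}(\mathrm{e}^{-z})$, which integrates to $r$ by the argument principle.
\end{itemize}
The paper bypasses all of this. Zero coordinates contribute nothing to the drift $\mathsf{z}_i\mathsf{z}_j/(\mathsf{z}_i-\mathsf{z}_j)$, so for every $N\ge r$ the $r$ positive rescaled coordinates solve the same $r$-particle SDE from $(x_1,\ldots,x_r)$. By uniqueness their law is independent of $N$, and rank preservation shows it is supported on $(0,\infty)^r$-valued paths. Theorem~\ref{thm-pathSpaceConv} then identifies the limit with this law.
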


\begin{proof}
By the standard results for SDEs with Lipschitz coefficients \cite{IkedaWatanabe,RevuzYor},
the matrix SDE \eqref{GL_NBmSDE} has a unique strong solution for every
initial matrix $\mathbf{Y}_0\in\mathrm{M}_N(\mathbb{C})$. Observe that
the solution started from $\mathbf{Y}_0$ is given by
\begin{align}\label{Y_NfromG_N}
\mathsf{Y}_N(t)
=
\mathrm{e}^{\theta t}
\mathbf{Y}_0\mathsf{G}_N(t),
\quad 
t\ge0,
\end{align}
where $\mathsf{G}_N(\sbullet)$ denotes the solution of
\eqref{GL_NBmSDE} with $\theta=0$, started from the identity matrix
$\mathbf{I}_N$.

At the level of squared singular values, the scalar drift amounts to
multiplication by the same strictly positive deterministic factor at any fixed time.
Hence, it preserves positivity, the number of positive coordinates, and
multiplicities. It is therefore enough to
prove the assertions for $\theta=0$.

Suppose first that
\begin{align*}
\mathfrak{d}(\upsilon)=0,
\quad 
\mathfrak{r}(\upsilon)=r<\infty.
\end{align*}
If $r=0$, then $x_i=0$ for every $i\in\mathbb{N}$ and, since
$\mathfrak{d}(\upsilon)=0$, also $\gamma=0$. Thus
$\upsilon
=
\bigl((0)_{i\in\mathbb{N}},0\bigr)$.
By \eqref{Y_NfromG_N}, the finite-dimensional matrix process started
from the zero matrix remains identically zero. Consequently, all its squared singular values are
identically zero, and the corresponding rescaled finite-dimensional state is
$\bigl((0)_{i\in\mathbb{N}},0\bigr)$ in exponential coordinates.
These finite-dimensional states converge to $\upsilon$. Therefore,
Theorem~\ref{thm-pathSpaceConv} implies that the limiting process
$\mathsf{x}$ is identically zero almost surely.

We thus assume below that $r\ge1$. Then, we have
\begin{align*}
x_1,\ldots,x_r>0,
\quad \quad 
x_i=0,\quad i>r,
\quad \  \
\gamma=\sum_{i=1}^r x_i.
\end{align*}
For every $N\ge r$, consider the
finite-dimensional squared-singular-value process started from
\begin{align}\label{finiteRankApproximation}
x_i^{(N)}
=
\begin{cases}
Nx_i, & i\in\llbracket r\rrbracket,\\
0, & r<i\le N.
\end{cases}
\end{align}
Observe that this approximation satisfies \eqref{IC-conv-exp}.
For every $N\ge r$, let
\begin{align*}
\mathsf{D}_N
\defeq
\operatorname{diag}
\left(
\sqrt{Nx_1},\ldots,\sqrt{Nx_r},0,\ldots,0
\right),
\end{align*}
whose squared singular values are given by
\eqref{finiteRankApproximation}. Since
$\mathsf{G}_N(t)$ is almost surely invertible for every $t\ge0$, from
\eqref{Y_NfromG_N} we have
\begin{align}\label{rankId}
\operatorname{rank}\mathsf{Y}_N(t)
=
\operatorname{rank}\mathsf{D}_N
=
r,
\quad 
t\ge0,\quad \textnormal{a.s.}
\end{align}
Therefore, we have, almost surely,
\begin{align*}
\mathsf{x}_i^{(N)}(t)>0,
\quad 
i\in\llbracket r\rrbracket,
\qquad
\mathsf{x}_i^{(N)}(t)=0,
\quad 
r<i\le N,
\qquad
t\ge0.
\end{align*}
Now, set
\begin{align*}
\mathsf{z}_i^{(N)}(t)
\defeq
N^{-1}\mathsf{x}_i^{(N)}(t),
\quad 
i\in\llbracket r\rrbracket,
\quad 
t\ge0.
\end{align*}
It is straightforward to verify that the process
$\bigl(\mathsf{z}_i^{(N)}(\sbullet)\bigr)_{i\in\llbracket r\rrbracket}$
satisfies (see \cite{GraczykMalecki2013,GraczykMalecki2014,AssiotisMirsajjadi2026})
\begin{align}\label{finiteNSDE_exp}
\mathrm{d}\mathsf{z}_i^{(N)}(t)
={}&
\mathsf{z}_i^{(N)}(t)\,
\mathrm{d}\mathsf{w}_i(t)
+
\sum_{j\in\llbracket r\rrbracket\setminus\{i\}}
\frac{
\mathsf{z}_i^{(N)}(t)\mathsf{z}_j^{(N)}(t)
}{
\mathsf{z}_i^{(N)}(t)-\mathsf{z}_j^{(N)}(t)
}
\,\mathrm{d}t,
\quad 
i\in\llbracket r\rrbracket,
\end{align}
with initial condition
\begin{align*}
\bigl(\mathsf{z}_i^{(N)}(0)\bigr)_{i\in\llbracket r\rrbracket}
=
(x_i)_{i\in\llbracket r\rrbracket}.
\end{align*}
Indeed, the zero coordinates make no contribution to the interaction
drift of the positive coordinates. Since \eqref{finiteNSDE_exp} and its
initial condition do not depend on $N$, uniqueness of the
$r$-particle dynamics
(\cite{GraczykMalecki2013,GraczykMalecki2014}, more specifically,
\cite[Proposition~2.1]{AssiotisMirsajjadi2026}) shows that the law of
\begin{align*}
\bigl(
\mathsf{z}_1^{(N)}(\sbullet),
\ldots,
\mathsf{z}_r^{(N)}(\sbullet)
\bigr)
\end{align*}
does not depend on $N$. Denote a process with this common law by
$\bigl(\mathsf{z}_1,\ldots,\mathsf{z}_r\bigr)$. It follows that
\begin{align*}
\left(
N^{-1}\mathsf{x}_i^{(N)}(\sbullet)
\right)_{i\in\mathbb{N}}
\overset{\textnormal{d}}{=}
\left(
\mathsf{z}_1(\sbullet),
\ldots,
\mathsf{z}_r(\sbullet),
0,0,\ldots
\right).
\end{align*}
Theorem~\ref{thm-pathSpaceConv} therefore implies
\begin{align*}
\left(
\mathsf{x}_i(\sbullet)
\right)_{i\in\mathbb{N}}
\overset{\textnormal{d}}{=}
\left(
\mathsf{z}_1(\sbullet),
\ldots,
\mathsf{z}_r(\sbullet),
0,0,\ldots
\right).
\end{align*}
Since, for every $N\ge r$, the law of
$\bigl(\mathsf{z}_1^{(N)},\ldots,\mathsf{z}_r^{(N)}\bigr)$
is concentrated on paths taking values in $(0,\infty)^r$, the same
is true of their common law.
Hence,  the preceding identity in law proves
\eqref{finiteRankPreservation}.

Suppose next that $\mathfrak{d}(\upsilon)>0$, and set
$d\defeq\mathfrak{d}(\upsilon)$.
For every $N\in\mathbb{N}$, define
\begin{align*}
\bm{x}^{(N)}
\defeq
\left(
d+Nx_i
\right)_{i=1}^N,
\end{align*}
and let $\mathsf{x}^{(N)}$ be the process started from
$\bm{x}^{(N)}$.
Observe that
\begin{align*}
N^{-1}x_i^{(N)}
\longrightarrow
x_i, \quad i\in\mathbb{N},\qquad
N^{-1}\sum_{i=1}^{N}x_i^{(N)}
\longrightarrow
d+\sum_{i=1}^{\infty}x_i
=
\gamma.
\end{align*}
Moreover, $x_i^{(N)}\ge d$ for $i\in\Ibr{N}$.
Let
\begin{align*}
\mathsf{D}_N
\defeq
\operatorname{diag}
\left(
\sqrt{x_1^{(N)}},
\ldots,
\sqrt{x_N^{(N)}}
\right).
\end{align*}
Then, we have
\begin{align*}
\mathsf{D}_N^{\dagger}\mathsf{D}_N
\ge
d\mathbf{I}_N. 
\end{align*}
Here and below, inequalities between Hermitian matrices are understood in the Loewner order. Also,  $\bullet^\dagger$ denotes the conjugate transpose of a matrix. 
Now, couple the two matrix processes by using the same Brownian flow
$\mathsf{G}_N$.
For every $t\ge0$, we have
\begin{align*}
\bigl(
\mathsf{D}_N\mathsf{G}_N(t/4)
\bigr)^{\dagger}
\bigl(
\mathsf{D}_N\mathsf{G}_N(t/4)
\bigr)
&=
\mathsf{G}_N(t/4)^{\dagger}
\mathsf{D}_N^{\dagger}\mathsf{D}_N
\mathsf{G}_N(t/4)
\\
&\ge
d\mathsf{G}_N(t/4)^{\dagger}\mathsf{G}_N(t/4).
\end{align*}
Multiplying the ordered eigenvalues on both sides by the common
positive factor $\mathrm{e}^{-Nt/2}$ and using the monotonicity of
ordered eigenvalues under positive-semidefinite matrix inequalities,
we obtain
\begin{align}\label{finiteNIdentityComparison}
\mathsf{x}_i^{(N)}(t)
\ge
d\mathsf{x}_{i;\mathrm{Id}}^{(N)}(t),
\qquad
i\in\Ibr{N},
\quad
\textnormal{a.s.},
\end{align}
where $\mathsf{x}_{\mathrm{Id}}^{(N)}$ denotes the
process $\mathsf{x}^{(N)}$ started from
$\bm{1}_N\defeq(1,\ldots,1)$.

Observe that the initial conditions $\bm{1}_N$ converge, in the
sense of \eqref{IC-conv-exp}, to
\begin{align*}
\upsilon_{\mathrm{Id}}
\defeq
\bigl((0)_{i\in\mathbb{N}},1\bigr).
\end{align*}
Applying Theorem~\ref{thm-pathSpaceConv} and denoting the corresponding
limit process arising from $\mathsf{x}_{\mathrm{Id}}^{(N)}$ by
$\mathsf{x}_{\mathrm{Id}}$, we have
\begin{align}\label{infiniteIdentityComparison}
d\mathsf{x}_{i;\mathrm{Id}}(t)
\le_{\mathrm{st}}
\mathsf{x}_i(t),
\quad 
i\in\mathbb{N},
\quad 
t\ge0,
\end{align}
where $\le_{\mathrm{st}}$ denotes the usual stochastic order; see
\cite[Theorem~1.A.3(c)]{ShakedShanthikumar2007}.

In logarithmic coordinates,
$\mathsf{x}_{\mathrm{Id}}(\sbullet)$ is identified in
\cite{AssiotisMirsajjadi2026} with the line ensemble constructed in
\cite{Ahn}. In particular, since by virtue of \cite[Theorem~1.3]{Ahn}, 
\begin{align*}
\mathbb{P}_{\upsilon_{\mathrm{Id}}}
\left(
\mathsf{x}_{i;\mathrm{Id}}(t)>0
\right)
=
1,
\quad
i\in\mathbb{N},\quad t>0,
\end{align*}
and $d>0$, we conclude from \eqref{infiniteIdentityComparison} that  
\begin{align*}
\mathbb{P}_{\upsilon}
\left(
\mathsf{x}_i(t)>0
\right)
=
1,
\quad
i\in\mathbb{N},\quad t>0.
\end{align*}
Taking the countable intersection over $i\in\mathbb{N}$, we obtain 
\eqref{infiniteRankPositive} as desired.

It remains to consider the case
$\mathfrak{d}(\upsilon)=0$ and
$\mathfrak{r}(\upsilon)=\infty$. Fix $k\in\mathbb{N}$ and define the
rank-$k$ truncation
\begin{align*}
\upsilon^{[k]}
\defeq
\left(
\left(
x_1,\ldots,x_k,0,0,\ldots
\right),
\sum_{i=1}^{k}x_i
\right).
\end{align*}
Denote by $\mathsf{x}^{[k]}$ the limiting process started from
$\upsilon^{[k]}$. By the finite-rank part of the proof,
\begin{align}\label{truncatedRankPositivity}
\mathbb{P}_{\upsilon^{[k]}}
\left(
\mathsf{x}_k^{[k]}(t)>0
\right)
=
1,
\qquad
t\ge0.
\end{align}
For $N\ge k$, define the initial conditions
\begin{align}\label{IC-truncated}
\bm{x}^{(N)}
&\defeq
\left(
Nx_1,\ldots,Nx_N
\right),
\\
\bm{x}^{(N),[k]}
&\defeq
\left(
Nx_1,\ldots,Nx_k,0,\ldots,0
\right),
\end{align}
and denote the corresponding finite-dimensional processes by
$\mathsf{x}^{(N)}$ and $\mathsf{x}^{(N),[k]}$, respectively. Let
\begin{align*}
\mathsf{D}_N
&\defeq
\operatorname{diag}
\left(
\sqrt{Nx_1},\ldots,\sqrt{Nx_N}
\right),
\\
\mathsf{D}_N^{[k]}
&\defeq
\operatorname{diag}
\left(
\sqrt{Nx_1},\ldots,\sqrt{Nx_k},0,\ldots,0
\right).
\end{align*}
Then, we have
\begin{align*}
\mathsf{D}_N^{\dagger}\mathsf{D}_N
\ge
\bigl(
\mathsf{D}_N^{[k]}
\bigr)^{\dagger}
\mathsf{D}_N^{[k]}.
\end{align*}
Coupling the two matrix processes by the same Brownian flow
$\mathsf{G}_N$, as above, yields
\begin{align*}
\mathsf{x}_k^{(N),[k]}(t)
\le
\mathsf{x}_k^{(N)}(t),
\quad
t\ge0,\quad \textnormal{a.s.}.
\end{align*}
Observe that the initial conditions in \eqref{IC-truncated} satisfy
\eqref{IC-conv-exp}, with respective limits
$\upsilon^{[k]}$ and $\upsilon$.
Dividing the preceding inequality by
$N$, applying Theorem~\ref{thm-pathSpaceConv}, and again using the closure of the usual
stochastic order under weak convergence  \cite[Theorem~1.A.3(c)]{ShakedShanthikumar2007}, we obtain,  
for every fixed $t\ge0$,
\begin{align*}
\mathsf{x}_k^{[k]}(t)
\le_{\mathrm{st}}
\mathsf{x}_k(t).
\end{align*}
Then, from \eqref{truncatedRankPositivity} and the preceding stochastic
domination, 
it follows that
\begin{align*}
\mathbb{P}_{\upsilon}
\left(
\mathsf{x}_k(t)>0
\right)
=
1,
\qquad
t\ge0.
\end{align*}
Since $k\in\mathbb{N}$ is arbitrary, taking the countable intersection
over $k$ implies  \eqref{infiniteRankPositive}.

Finally, by Theorem~\ref{thmIntro-fixedTimeKernel}, for every fixed
$t>0$, the logarithms of the positive coordinates form a determinantal
point process on $\mathbb{R}$. Its second factorial moment measure is
absolutely continuous with respect to Lebesgue measure on
$\mathbb{R}^2$ and therefore assigns zero mass to the diagonal.
Consequently, almost surely, no two positive coordinates coincide at
time $t$.
Together with \eqref{finiteRankPreservation} and
\eqref{infiniteRankPositive}, this proves 
\eqref{rParticlesDistinct} and
\eqref{infiniteParticlesDistinct}, and completes the proof.
\end{proof}

\subsection{Proofs of the Gibbs resampling and ISDE results}

\begin{proof}[Proof of Corollary~\ref{corIntro-GibbsProperty}]
If $\mathcal{I}(\upsilon)=\emptyset$, there is nothing to prove.
Fix
$\epsilon>0$. By Proposition~\ref{prop-positiveCoordinates}, almost surely,
the active coordinates are strictly positive and strictly ordered at time
$\epsilon$.

By the Markov property of the process $\mathsf{X}(\sbullet)$, conditionally on the
full state 
$\mathsf{X}(\epsilon)$, 
the shifted process
$\bigl(
\mathsf{X}(\epsilon+t)
\bigr)_{t\ge0}$
has the law of the same dynamics started from $\mathsf{X}(\epsilon)$.

If $\mathcal{I}(\upsilon)=\llbracket r\rrbracket$ for some $r\in\mathbb{N}$, then, by the
finite-rank identification in the proof of
Proposition~\ref{prop-positiveCoordinates}, the active coordinates of the
restarted process form the corresponding $r$-particle diffusion started
from a strictly ordered configuration. Hence
\cite[Proposition~2.13]{AssiotisMirsajjadi2026} gives the Brownian Gibbs
property on $[\epsilon,\infty)$. 

If
$\mathcal{I}(\upsilon)=\mathbb N$, then
$\bigl(
\mathsf{x}_i(\epsilon)
\bigr)_{i\in\mathcal{I}(\upsilon)}$ has strictly positive and strictly ordered particle
coordinates, and
\cite[Proposition~2.16]{AssiotisMirsajjadi2026} applies directly.
Consequently,
$\bigl(
\mathfrak{x}_i(t)
\bigr)_{i\in\mathcal{I}(\upsilon), t\ge\epsilon}$
satisfies the Brownian Gibbs property under
$\mathbb{P}_{\upsilon}$. 
Since $\epsilon>0$ was arbitrary, the active line ensemble satisfies the
Brownian Gibbs property on every
$[\epsilon,\infty)$, $\epsilon>0$, as desired.

Finally, Gibbs property implies that the active paths do not
intersect on any compact positive-time interval almost surely. Applying
this to
$[1/n,n],
\,
n\in\mathbb N$, 
and taking the countable intersection of the resulting probability-one
events yields \eqref{nonCollision}.
\end{proof}

\begin{rmk}
The Gibbs resampling property established here for general initial conditions provides an alternative proof of the
non-collision result in
\cite{AssiotisMirsajjadi2026}; see  \cite[Remark~2.6]{AssiotisMirsajjadi2026}.
\end{rmk}

\begin{proof}[Proof of Theorem~\ref{thmIntro-ISDE}]
If $\mathcal{I}(\upsilon)=\emptyset$, there is nothing to prove.

Suppose first that
$\mathcal{I}(\upsilon)=\llbracket r\rrbracket$ for some $r\in\mathbb{N}$.
By the finite-rank identification in the proof of
Proposition~\ref{prop-positiveCoordinates}, the active coordinates have
the law of the corresponding $r$-particle squared-singular-value
diffusion. Hence,  the conclusion follows from the finite-dimensional theory
developed in \cite{GraczykMalecki2013,GraczykMalecki2014}, or directly from
\cite[Proposition~2.1]{AssiotisMirsajjadi2026}.
It therefore remains to consider the case
$\mathcal{I}(\upsilon)=\mathbb{N}$.
Let
$\bigl(\mathcal{F}_t\bigr)_{t\ge0}$
denote the completed, right-continuous natural filtration of
$(\mathsf{X}(t))_{t\ge0}$.

Let $\epsilon>0$. By Proposition~\ref{prop-positiveCoordinates},
almost surely,
\begin{align*}
\mathsf{x}_1(\epsilon)>
\mathsf{x}_2(\epsilon)>
\cdots>0.
\end{align*}
By the Markov property of the enhanced process $\mathsf{X}(\sbullet)$,
conditionally on $\mathsf{X}(\epsilon)$, the shifted process
$\bigl(\mathsf{X}(\epsilon+t)\bigr)_{t\ge0}$ has the law of the same
dynamics started from $\mathsf{X}(\epsilon)$. Hence, applying
\cite[Proposition~2.20]{AssiotisMirsajjadi2026} conditionally on
$\mathsf{X}(\epsilon)$, the active coordinates satisfy the
corresponding ISDE on $[\epsilon,\infty)$ almost surely, with initial
values $(\mathsf{x}_i(\epsilon))_{i\in\mathbb{N}}$.  Therefore, the process 
$\big(\mathsf{m}_i^{(\epsilon)}(\sbullet)\big)_{i\in\mathbb{N}}$
defined as
\begin{align}
\label{localMartingale-epsilon}
\mathsf{m}_i^{(\epsilon)}(t)
\defeq{}&
\mathsf{x}_i(t)-\mathsf{x}_i(\epsilon)
-\frac{\theta}{2}
\int_\epsilon^t
\mathsf{x}_i(s)\,\mathrm{d}s
-
\int_\epsilon^t
\sum_{j\in\mathbb{N}\setminus\{i\}}
\frac{
\mathsf{x}_i(s)\mathsf{x}_j(s)
}{
\mathsf{x}_i(s)-\mathsf{x}_j(s)
}
\,\mathrm{d}s,
\qquad t\ge\epsilon,
\end{align}
is a family of continuous local martingales on $[\epsilon,\infty)$, with respect
to the filtration
$\bigl(\mathcal{F}_t\bigr)_{t\ge\epsilon}$, and its quadratic covariation is given by
\begin{align}
\left\langle
\mathsf{m}_i^{(\epsilon)},
\mathsf{m}_j^{(\epsilon)}
\right\rangle_t
=
\mathbf{1}_{\{i=j\}}
\int_\epsilon^t
\mathsf{x}_i(s)^2\,\mathrm{d}s,
\qquad
t\ge\epsilon.
\label{localMartingaleBracket}
\end{align}

It is enough to consider $\epsilon\in\mathbb{Q}_{>0}$. If
$0<\epsilon'<\epsilon$ are rational, then
\eqref{localMartingale-epsilon} gives
\begin{align*}
\mathsf{m}_i^{(\epsilon')}(t)
-
\mathsf{m}_i^{(\epsilon')}(\epsilon)
=
\mathsf{m}_i^{(\epsilon)}(t),
\quad
t\ge\epsilon, \quad i\in\mathbb{N},
\end{align*}
and hence, these local martingales are compatible and define, up to an
additive constant, a continuous local martingale $(\mathsf{m}_i)_{ i\in\mathbb{N}}$ on
$(0,\infty)$. Moreover, we have
\begin{align}\label{globalMartingaleBracket}
\left\langle
\mathsf{m}_i,
\mathsf{m}_j
\right\rangle_t
-
\left\langle
\mathsf{m}_i,
\mathsf{m}_j
\right\rangle_\epsilon
=
\mathbf{1}_{\{i=j\}}
\int_\epsilon^t
\mathsf{x}_i(s)^2\,\mathrm{d}s.
\end{align}
Note that, by virtue of Corollary~\ref{corIntro-GibbsProperty}, almost surely,
\begin{align*}
\mathsf{x}_i(s)>0,
\qquad
i\in\mathbb{N},\quad s>0.
\end{align*}
Then, for every $\epsilon>0$ and $i\in\mathbb{N}$, define
\begin{align}\label{BrownianIncrements}
\mathsf{w}_i^{(\epsilon)}(t)
\defeq
\int_\epsilon^t
\frac{1}{\mathsf{x}_i(s)}
\,\mathrm{d}\mathsf{m}_i(s),
\qquad
t\geq\epsilon.
\end{align}
By \eqref{globalMartingaleBracket}, for every $t\geq\epsilon$ we have
\begin{align*}
\left\langle
\mathsf{w}_i^{(\epsilon)},
\mathsf{w}_j^{(\epsilon)}
\right\rangle_t
=
\mathbf{1}_{\{i=j\}}(t-\epsilon).
\end{align*}
Therefore, using the L\'evy's characterization theorem, we conclude that for every $\epsilon>0$,
each finite subfamily of
 $\big(
\mathsf{w}_i^{(\epsilon)}(\epsilon+t)
\big)_{i\in\mathbb{N},\,t\geq0}$ 
is a standard multidimensional Brownian motion with respect to the
shifted filtration
$\bigl(\mathcal{F}_{\epsilon+t}\bigr)_{t\geq0}$.

Observe that for $0<\delta<\epsilon<t$, by additivity of stochastic integrals we have
\begin{align*}
\mathsf{w}_i^{(\delta)}(t)
-
\mathsf{w}_i^{(\epsilon)}(t)
=
\mathsf{w}_i^{(\delta)}(\epsilon).
\end{align*}
Since the right-hand side is a Brownian increment over an interval of
length $\epsilon-\delta$,
\begin{align*}
\mathbb{E}_{\upsilon}
\left[
\left|
\mathsf{w}_i^{(\delta)}(t)
-
\mathsf{w}_i^{(\epsilon)}(t)
\right|^4
\right]
=
3|\epsilon-\delta|^2.
\end{align*}
Thus, for every fixed $i\in\mathbb{N}$ and $t>0$,
$\mathsf{w}_i^{(\epsilon)}(t)$ is Cauchy in $L^4$ as
$\epsilon\downarrow0$ through $\mathbb{Q}_{>0}$. We therefore define,
for each $t>0$ and $i\in\mathbb{N}$
\begin{align}\label{BrownianExtensionAtZero}
\mathsf{w}_i(t)
\defeq
\lim_{\substack{\epsilon\downarrow0\\
\epsilon\in\mathbb{Q}_{>0}}}
\mathsf{w}_i^{(\epsilon)}(t)
\end{align}
as an $L^4$ limit, and set
$\mathsf{w}_i(0)=0$.

For $0<s<t$ and $0<\epsilon<s$, we have
\begin{align*}
\mathsf{w}_i^{(\epsilon)}(t)
-
\mathsf{w}_i^{(\epsilon)}(s)
=
\int_s^t
\frac{1}{\mathsf{x}_i(r)}
\,\mathrm{d}\mathsf{m}_i(r).
\end{align*}
The right-hand side is a Brownian increment over an interval of length
$t-s$. Letting $\epsilon\downarrow0$ in $L^4$ then gives
\begin{align*}
\mathbb{E}_{\upsilon}
\left[
\left|
\mathsf{w}_i(t)-\mathsf{w}_i(s)
\right|^4
\right]
=
3|t-s|^2.
\end{align*}
For $s=0$, since
\begin{align*}
\mathbb{E}_{\upsilon}
\left[
\big|
\mathsf{w}_i^{(\epsilon)}(t)
\big|^4
\right]
=
3(t-\epsilon)^2,
\end{align*}
the $L^4$ convergence in
\eqref{BrownianExtensionAtZero}, together with
$\mathsf{w}_i(0)=0$, implies the same identity.
Kolmogorov's continuity theorem therefore yields continuous
modifications, which, by countability, may be chosen simultaneously
for all $i\in\mathbb{N}$. We work with these modifications henceforth.

It remains to verify the martingale property of the limiting
processes $(\mathsf{w}_i)_{i\in\mathbb{N}}$ and to identify their
quadratic covariations.  Note that since
each
$\mathsf{w}_i^{(\epsilon)}(t)$ is $\mathcal{F}_t$-measurable,
$\mathsf{w}_i(t)$ is also $\mathcal{F}_t$-measurable. Moreover, for
$0<s<t$ and every rational $\epsilon\in(0,s)$,
\begin{align*}
\mathbb{E}_{\upsilon}
\left[
\mathsf{w}_i^{(\epsilon)}(t)
\,\big|\,
\mathcal{F}_s
\right]
=
\mathsf{w}_i^{(\epsilon)}(s).
\end{align*}
Letting $\epsilon\downarrow0$ in $L^1$ shows that
\begin{align*}
\mathbb{E}_{\upsilon}
\left[
\mathsf{w}_i(t)
\,\middle|\,
\mathcal{F}_s
\right]
=
\mathsf{w}_i(s).
\end{align*}
By right-continuity of the filtration, the identity extends to $s=0$.
Thus each $\mathsf{w}_i$ is a continuous
$(\mathcal{F}_t)_{t\geq0}$-martingale.

Similarly, for every $\epsilon>0$ and $i,j\in\mathbb{N}$,
\begin{align*}
\mathsf{w}_i^{(\epsilon)}(t)
\mathsf{w}_j^{(\epsilon)}(t)
-
\mathbf{1}_{\{i=j\}}(t-\epsilon),
\qquad t\geq\epsilon,
\end{align*}
is a martingale. Since the $L^4$ convergence of the factors implies
$L^2$ convergence of their products, passing to the limit shows that
\begin{align*}
\mathsf{w}_i(t)\mathsf{w}_j(t)
-
\mathbf{1}_{\{i=j\}}t,
\qquad t\geq0,
\end{align*}
is a martingale. Consequently,
\begin{align*}
\left\langle
\mathsf{w}_i,
\mathsf{w}_j
\right\rangle_t
=
\mathbf{1}_{\{i=j\}}t, \quad t\ge 0.
\end{align*}
Hence, by L\'evy's characterization theorem, 
$(\mathsf{w}_i)_{i\in\mathbb{N}}$ is a family of independent standard
Brownian motions on $\mathbb{R}_+$.

Returning to the definition of
$\mathsf{m}_i^{(\epsilon)}$, we conclude that, almost surely, for every
$i\in\mathbb{N}$ and $0<\epsilon<t$,
\begin{align}\label{ISDE-positiveTime-exp}
\mathsf{x}_i(t)
={}&
\mathsf{x}_i(\epsilon)
+
\int_\epsilon^t
\mathsf{x}_i(s)\,\mathrm{d}\mathsf{w}_i(s)
+
\frac{\theta}{2}
\int_\epsilon^t
\mathsf{x}_i(s)\,\mathrm{d}s
+
\int_\epsilon^t
\sum_{j\in\mathbb{N}\setminus\{i\}}
\frac{
\mathsf{x}_i(s)\mathsf{x}_j(s)
}{
\mathsf{x}_i(s)-\mathsf{x}_j(s)
}
\,\mathrm{d}s.
\end{align}

Finally, let $\epsilon\downarrow0$. Since $\mathsf{X}(\sbullet)$ has continuous
sample paths in $\Upsilon$,
$\mathsf{x}_i(\epsilon)\longrightarrow x_i$ almost surely. Moreover, continuity of $\mathsf{x}_i$ implies
\[
\int_0^t\mathsf{x}_i(s)^2\,\mathrm{d}s<\infty
\]
almost surely. Hence,  the stochastic integral
$\int_0^t\mathsf{x}_i(s)\,\mathrm{d}\mathsf{w}_i(s)$ is well defined,
and its continuity in the time parameter implies
\[
\int_\epsilon^t\mathsf{x}_i(s)\,\mathrm{d}\mathsf{w}_i(s)
\longrightarrow
\int_0^t\mathsf{x}_i(s)\,\mathrm{d}\mathsf{w}_i(s)
\]
almost surely as $\epsilon\downarrow0$. The ordinary integral converges
by continuity of $\mathsf{x}_i$.

Rearranging \eqref{ISDE-positiveTime-exp}, we obtain
\begin{align*}
\int_\epsilon^t
\sum_{j\in\mathbb{N}\setminus\{i\}}
\frac{\mathsf{x}_i(s)\mathsf{x}_j(s)}
{\mathsf{x}_i(s)-\mathsf{x}_j(s)}
\,\mathrm{d}s
=
\mathsf{x}_i(t)-\mathsf{x}_i(\epsilon)
-\int_\epsilon^t
\mathsf{x}_i(s)\,\mathrm{d}\mathsf{w}_i(s)
-\frac{\theta}{2}
\int_\epsilon^t
\mathsf{x}_i(s)\,\mathrm{d}s.
\end{align*}
The right-hand side has an almost surely finite limit as
$\epsilon\downarrow0$. Hence,  the interaction integral admits the
improper limit appearing in \eqref{ISDE-exponential}, and passing to
the limit in \eqref{ISDE-positiveTime-exp} proves
\eqref{ISDE-exponential}.
\end{proof}

\begin{proof}[Proof of Corollary~\ref{corIntro-logISDE}]
Fix $i\in\mathcal{I}(\upsilon)$ and $0<\epsilon<t$. 
By Corollary~\ref{corIntro-GibbsProperty}, the active coordinates are
strictly positive on $[\epsilon,t]$ almost surely. 
Applying It\^o's formula to
$\mathfrak{x}_i=\log\mathsf{x}_i$ in
\eqref{ISDE-positiveTime-exp}, we obtain
\begin{align*}
\mathrm{d}\mathfrak{x}_i(s)
={}&
\mathrm{d}\mathsf{w}_i(s)
+
\frac{\theta-1}{2}\,\mathrm{d}s
+
\sum_{j\in\mathcal{I}(\upsilon)\setminus\{i\}}
\frac{1}
{\mathrm{e}^{\mathfrak{x}_i(s)-\mathfrak{x}_j(s)}-1}
\,\mathrm{d}s.
\end{align*}
Integrating from $\epsilon$ to $t$ yields
\eqref{logISDE-positiveTime}.

If all active logarithmic initial coordinates are finite, then, arguing
as in the proof of Theorem~\ref{thmIntro-ISDE} and letting
$\epsilon\downarrow0$, the interaction integral admits an almost surely
finite improper limit. Hence,  \eqref{logISDE-timeZero} follows.

\end{proof}

% =================================================
% Bibliography
% =================================================

\bibliographystyle{acm}
\bibliography{References}

\bigskip

\noindent
\footnotesize\textsc{School of Mathematics, University of Edinburgh\\
%James Clerk Maxwell Building, 
Edinburgh, %EH9 3FD, 
United Kingdom}

\noindent
\footnotesize\textit{Email:}
\href{mailto:zmirsajj@ed.ac.uk}
{zmirsajj@ed.ac.uk}

\end{document}